\crefname{equation}{}{}
\newcommand{\R}{\mathbb{R}}
\newcommand{\N}{\mathbb{N}}
\newcommand{\C}{\mathbb{C}}
\newcommand{\ee}{\mathrm{e}}
\DeclareDocumentCommand\dd{ o g d() }{
	\IfNoValueTF{#2}{
		\IfNoValueTF{#3}
			{\mathrm{d}\IfNoValueTF{#1}{}{^{#1}}}
			{\mathinner{\mathrm{d}\IfNoValueTF{#1}{}{^{#1}}\argopen(#3\argclose)}}
		}
		{\mathinner{\mathrm{d}\IfNoValueTF{#1}{}{^{#1}}#2} \IfNoValueTF{#3}{}{(#3)}}
	}
\newcommand{\dx}{\dd{x}}
\newcommand{\dy}{\dd{y}}
\newcommand{\dz}{\dd{z}}
\newcommand{\deta}{\dd{\eta}}
\newcommand{\dxi}{\dd{\xi}}
\newcommand{\del}{\partial}
\newcommand{\eps}{\varepsilon}
\newcommand{\mom}{M}
\newcommand{\T}{\mathcal{T}} %Laplace transform
\newcommand{\M}{\mathcal{M}} %space of measures
\newcommand{\LL}{\mathcal{L}} %linearised coagulation operator
\newcommand{\A}{A}%inverse of \LL
\newcommand{\id}{\text{id}}
\newcommand{\B}{B}%bilinear forms
\newcommand{\X}[3][]{X^{#2,#3}_{#1}}
\newcommand{\prof}{u} %self-similar profile without parameter, mainly for introduction
\newcommand{\pr}[1][\eps]{\prof^{(#1)}} %self-similar profile with parameter \eps
\newcommand{\dpr}[2][\eps]{\pr_{#2}}
\newcommand{\Prp}[2][\eps]{U^{(#1)}_{#2}}
\newcommand{\hs}[1]{\mathfrak{h}_{#1}} %homogenous solution for inversion of linearised operator
\newcommand{\ode}{h} %'solution' in derivation of ODE
\newcommand{\rhs}{g} %right-hand side in derivation of ODE
\newcommand{\rode}{v} %'solution-right-hand side
\newcommand{\K}{P}
\newcommand{\J}{Q}
\newcommand{\weight}[2]{\omega_{#1,#2}} % general weight-function
\newcommand{\vcc}{\vcentcolon}
\DeclarePairedDelimiter\abs{\lvert}{\rvert}
\DeclarePairedDelimiter\norm{\Vert}{\rVert}
\theoremstyle{plain}
\newtheorem{theorem}{Theorem}[section]
\newtheorem{lemma}[theorem]{Lemma}
\newtheorem{proposition}[theorem]{Proposition}
\theoremstyle{definition}
\newtheorem{definition}[theorem]{Definition}
\theoremstyle{remark}
\newtheorem{remark}[theorem]{Remark}
\numberwithin{equation}{section}
   \def\MR#1{}
\title{Stability and uniqueness of self-similar profiles in $L^1$ spaces for perturbations of the constant kernel in Smoluchowski's coagulation equation }
 \author{Sebastian Throm \thanks{\texttt{throm@correo.ugr.es}}}
 \affil{\em Universidad de Granada, Departamento de Matem{\'a}tica Aplicada, \em Avenida de Fuentenueva S/N, 18071 Granada, Spain}
\date{}
\begin{document}
 \maketitle
 
 \begin{abstract}
  In this work, we consider self-similar profiles for Smoluchowski's coagulation equation for kernels which are possibly unbounded perturbations of the constant one. For this model, we show that the self-similar solutions for the perturbed kernel are close in weighted $L^1$ spaces to the profile of the unperturbed equation, i.e.\@ the profiles are stable with respect to the perturbation. Additionally, we revisit the problem of uniqueness for these coagulation kernels. In fact, we will improve a corresponding result (see \cite{NTV15,NTV16}) by relaxing the conditions on the perturbation significantly while at the same time the proof can also be notably shortened.
 \end{abstract}

 \section{Introduction}
 
 This article is concerned with the study of self-similar profiles for Smoluchowski's coagulation equation which reads
 \begin{equation}\label{eq:Smol}
  \del_{t}\phi(\xi,t)=\frac{1}{2}\int_{0}^{\xi}K(\xi-\eta,\eta)\phi(\xi-\eta)\phi(\eta)\deta-\phi(\xi)\int_{0}^{\infty}K(\xi,\eta)\phi(\eta)\deta\qquad \phi(\cdot)=\phi(\cdot,t).
 \end{equation}
 This equation arises as a mean-field model for systems of aggregating particles where $\phi(\xi,t)$ corresponds to the density of clusters of size/mass $\xi\in(0,\infty)$ at time $t>0$. The two integrals on the right-hand side account for the gain and loss of particles of size $\xi$ due to the coagulation process. In fact, two mergers of sizes $\xi-\eta$ and $\eta$ with $\eta<\xi$ form a cluster of mass $\xi$ and the rate at which such collisions take place is described by the integral kernel $K$. The factor $1/2$ is due to the symmetry of the coagulation process. In the same manner, the second integral in~\eqref{eq:Smol} takes into account, that particles of size $\xi$ will be removed from the system once they merge with any cluster of size $\eta>0$ to form a larger one.  
  
 A fundamental property of~\eqref{eq:Smol} is the (formal) conservation of total mass which corresponds to the first moment $\mom_{1}[\phi](t)\vcc=\int_{0}^{\infty}\xi\phi(\xi,t)\dxi$ of $\phi$. More precisely, if one multiplies~\eqref{eq:Smol} by $\xi$, integrates over $(0,\infty)$ and formally interchanges the order of integration one gets $\del_{t} \mom_{1}[\phi](t)=0$ i.e.\@ $\mom_{1}[\phi]$ is constant in time. However, as already mentioned, this argument is in general not correct and it has been proven in~\cite{EMP02,ELM03} that for kernels which grow faster than linearly at infinity $\mom_{1}[\phi](t)$ is in fact decaying as $t\to\infty$. The latter property is also known as \emph{gelation} and typically interpreted as a phase transition. Yet, in this work, we will only consider kernels $K$ which preserve the total mass.
 
 More precisely, we are interested in \emph{self-similar profiles} for~\eqref{eq:Smol}, i.e.\@ solutions of the special form $\phi(\xi,t)=t^{-2}\prof(\xi/t)$. The reason for this is that for homogeneous kernels $K$, based on formal considerations, it is conjectured (see e.g.\@ \cite{LaM04}) that such profiles describe the long-time behaviour of solutions to~\eqref{eq:Smol} in the sense that 
 \begin{equation}\label{eq:scaling:hyp}
  t^2\phi(t\xi,t)\longrightarrow \prof(\xi)\qquad \text{as }t\to \infty.
 \end{equation}
 Though up to now this question is still unsolved for kernels which arise typically in applications, such as Smoluchowski's kernel
 \begin{equation}\label{eq:kernel:Smol}
  K(\xi,\eta)=(\xi^{1/3}+\eta^{1/3})(\xi^{-1/3}+\eta^{-1/3}),
 \end{equation}
there are two prominent mass-conserving models, the \emph{solvable kernels}, which are well-understood. In fact, for $K\equiv 2$ and $K(\xi,\eta)=\xi+\eta$ explicit solution formulas are available at least in terms of the Laplace transform. Due to this, the conjecture~\eqref{eq:scaling:hyp}, also known as \emph{scaling hypothesis}, could be verified in these two cases (\cite{MeP04}). Even more, in~\cite{MeP04} the authors showed that besides the well-known fast-decaying profiles, there is a whole family of fat-tailed self-similar profiles with algebraic behaviour at infinity. Since the profiles for the two mass-conserving solvable kernels can be computed explicitly, one in particular obtains that they are unique upon a suitable normalisation.  

For kernels different from the solvable ones, such as~\eqref{eq:kernel:Smol} and many other examples from applications (see e.g.\@ \cite{Ald99,LaM04,Dra72}), the picture is much less complete. On the one hand, the well-posedness of~\eqref{eq:Smol} could be verified for large classes of kernels (i.a.\@ \cite{FoL06a,EsM06,EMR05}). Similarly, the existence of self-similar solutions and their properties are quite well understood (i.a.\@ \cite{EMR05,FoL05,FoL06,NiV14,NiV11,CaM11}). However, the actual question, namely if~\eqref{eq:scaling:hyp} holds true or not, is still unsolved. Even worse, for most kernels also uniqueness of the profiles could not yet be established. However, for the latter problem there exist at least some recent results which we will briefly summarise. In \cite{LNV18,Lau18} uniqueness of self-similar solutions has been proven for the two families of kernels $K(\xi,\eta)=\xi^{\lambda+1}\delta(\xi-\eta)$ (the \emph{diagonal kernel}) with $\lambda<1$ and $K(\xi,\eta)=(\xi\eta)^{-\lambda/2}$ with $\lambda>0$. Here $\delta$ denotes the Dirac distribution and $\lambda$ is the homogeneity of the kernel. Both proofs heavily rely on the specific structure of the considered kernel. In fact, for the diagonal kernel, the equation for self-similar profiles reduces to a non-local ODE. On the other hand, the proof in~\cite{Lau18} exploits that for $K(\xi,\eta)=(\xi\eta)^{-\lambda/2}$ the moment $\mom_{-\lambda/2}$ of self-similar profiles is already fixed by prescribing the total mass.

Moreover, in~\cite{NTV15,NTV16,Thr17a} the uniqueness problem has been attacked by a perturbative approach. More specifically, in these works the kernel is assumed to satisfy
\begin{equation*}
 \begin{gathered}
   0\leq K(\xi,\eta)-2\leq \eps \bigl(\xi^{\alpha}\eta^{-\alpha}+\xi^{-\alpha}\eta^{\alpha}\bigr) \qquad \text{with }\alpha\in[0,1/2) \text{ for all }\xi,\eta>0\\
   \text{and that }K(\cdot,1) \text{ admits an analytic extension to }\C\setminus (-\infty,0]
 \end{gathered}
\end{equation*}
along with further technical estimates on the latter. Working on the level of the Laplace transform, it has then been shown that self-similar profiles for such kernels are perturbations of the explicit profile for $K=2$ and based on this, a contraction estimate could be obtained providing uniqueness. However, the assumed analyticity on $K$ appears to be very restrictive since even for an extremely small non-analytic perturbation the proof breaks down. In fact, this regularity was necessary because of working with the Laplace-transformed equations which required to express all functionals in terms of their Laplace transform. For instance, also the perturbation $K(\xi-\eta)-2$ had to be expressed as Laplace integral. 

In this work, we will revisit this model of perturbations of the constant kernel but conversely to~\cite{NTV15,NTV16,Thr17a}, we will use an $L^1$ functional setup. The two main results which we will show are the following. First, we provide a stability statement for self-similar profiles in the perturbative regime, i.e.\@ that for sufficiently small $\eps>0$ all self-similar profiles are close to the one for $\eps=0$ in suitably weighted $L^1$ spaces (Theorem~\ref{Thm:closeness:profiles:NEW}).

As a second main result, we then provide another proof of uniqueness of self-similar solutions now in the weighted $L^1$ topology rather than for the Laplace transformed quantities (see Theorem~\ref{Thm:uniqueness}). This approach has several advantages. First of all, the whole proof can be significantly simplified. However, at the same time, we get a much stronger result, namely working in $L^1$ allows to get rid off the analyticity condition and most of the other technical assumptions. Furthermore, the results in~\cite{NTV15,NTV16,Thr17a} are restricted to exponents $\alpha\in[0,1/2)$ while in this work, we can now extend uniqueness to all $\alpha\in[0,1)$. A more detailed comparison of the current work to the results in \cite{NTV15,NTV16,Thr17a} can be found in Section~\ref{Sec:comparison:old:proof} below.

The remainder of the article is structured as follows. In Section~\ref{Sec:assumptions:results} we introduce the functional setup which we will use throughout this work, we collect the assumptions on the coagulation kernel and we present the two main statements which we will prove. In Section~\ref{Sec:previous:results} we summarise several results from~\cite{NiV14a,NTV15,NTV16} on which we will rely and we derive some immediate consequences. \Cref{Sec:pointwise:convergence,Sec:proof:stability} are then concerned with the proof of our first main result, Theorem~\ref{Thm:closeness:profiles:NEW}. In Section~\ref{Sec:preparation:uniqueness} we collect the key results which we need to prove the uniqueness of self-similar profiles in Section~\ref{Sec:proof:uniqueness}. The proofs of two of these preparing statements (\cref{Prop:continuity:L:inverse,Prop:bound:layer:est}) are relatively long and technical which is why they will be given separately in \cref{Sec:proof:inversion,Sec:proof:bl}. In the appendix we finally collect some additional material. More precisely, in \cref{Sec:properties:weight} we summarise some elementary properties of the weight and certain particular functions which we will frequently use. \Cref{Sec:inverse:derivation} contains a formal derivation of an explicit formula for the inverse of the linearised coagulation operator in self-similar variables. The latter is required to prove the boundedness of the inverse (see Proposition~\ref{Prop:continuity:L:inverse}). Finally, \cref{Sec:regularity:profiles} provides the regularity of self-similar profiles which is required to derive~\eqref{eq:bl:1} in the proof of Proposition~\ref{Prop:bound:layer:est}.

\section{Notation, assumptions and main results}\label{Sec:assumptions:results}

\subsection{Function spaces}

For $a,b\in\R$, we introduce the  weight function 
\begin{equation}\label{eq:def:weight}
 \weight{a}{b} \colon (0,\infty)\to (0,\infty) \qquad \text{such that}\qquad \weight{a}{b}(x)\vcc=\begin{cases}
          x^{a} &\text{if }x\leq 1\\
          x^{b} &\text{if }x\geq 1.
         \end{cases}
\end{equation}
We also note the following elementary properties:
\begin{align}
 \weight{a_1}{b_1}(x)\weight{a_2}{b_2}(x)&=\weight{a_1+a_2}{b_1+b_2}(x) \label{eq:weight:additivity}\\
 x^{\gamma}\weight{a}{b}(x)&=\weight{a+\gamma}{b+\gamma}(x) \label{eq:weight:shift}\\ 
 \weight{a_1}{b_1}(x)&\leq \weight{a_2}{b_2}(x)\qquad\text{if } a_2\leq a_1\quad \text{and}\quad b_1\leq b_2 \label{eq:weight:monotonicity}\\
 (1-\ee^{-x})\weight{a}{b}(x)&\leq \weight{a+1}{b}(x). \label{eq:weight:regularising}
\end{align}
With this notation, we can then define the sub-Banach space $\X{a}{b}$ of $L^1(0,\infty)$ for $a,b\in\R$ as
\begin{multline*}
 \X{a}{b}\vcc=\biggl\{g\in L^{1}(0,\infty)\;\bigg|\; \int_{0}^{\infty}\abs{g(x)}\weight{a}{b}(x)\dx<\infty\bigg\}\\*
 \text{with the norm }\norm{g}_{\X{a}{b}}\vcc=\int_{0}^{\infty}\abs{g(x)}\weight{a}{b}(x)\dx.
\end{multline*}
Moreover, we will need another sub-Banach space of $\X{a}{b}$ where the linearised coagulation operator $\LL$ (see~\eqref{eq:linearised:operator:abstract}) is injective. In fact, for $a\leq 1$ and $b\geq 1$ we define
\begin{equation*}
 \X[0]{a}{b}\vcc=\biggl\{g\in \X{a}{b}\;\bigg|\; \int_{0}^{\infty}xg(x)\dx=0\bigg\} \quad \text{with norm }\norm{\cdot}_{\X{a}{b}},
\end{equation*}
the subspace of $\X{a}{b}$ with vanishing first moment.
As a direct consequence of~\eqref{eq:weight:monotonicity} and Lebesgue's theorem we obtain the continuous embeddings
\begin{equation}\label{eq:spaces:embedding}
 \begin{gathered}
  \X{a_2}{b_2}\subseteq \X{a_1}{b_1}\qquad \text{and}\qquad  \X[0]{a_2}{b_2}\subseteq \X[0]{a_1}{b_1}\qquad \text{if } a_2\leq a_1 \text{ and }b_1\leq b_2\\
  \text{together with the estimate } \norm{g}_{\X{a_1}{b_1}}\lesssim \norm{g}_{\X{a_2}{b_2}} \quad \text{for }g\in\X{a_2}{b_2}.
 \end{gathered}
\end{equation}
For the latter embedding, we have to assume of course that $a_k\leq 1$ and $b_k\geq 1$ for $k=1,2$.

\begin{remark}
 We note that throughout this article, the notation $a\lesssim b$ means that the quantity $a$ can be estimated up to a constant by $b$, i.e.\@ there exists $C>0$ such that $a\leq Cb$.
\end{remark}

\subsection{Assumptions on the kernel}

To make our statement precise, let us specify the assumptions on the coagulation kernel $K_{\eps}$. We assume that $K_{\eps}$ is continuous, symmetric and homogeneous of degree zero, i.e.\@ 
\begin{equation}\label{eq:Ass:K1}
 K_{\eps}\in C(\R_{>0}\times \R_{>0}), \quad K_{\eps}(x,y)=K_{\eps}(y,x)\quad \text{and}\quad K_{\eps}(\lambda x, \lambda y)=K_{\eps}(x,y)\quad \text{for all }\lambda, x, y>0.
\end{equation}
Moreover, let $K_{\eps}$ satisfy
\begin{equation}\label{eq:Ass:K2}
 K_{\eps}(x,y)=2+\eps W(x,y) \qquad \text{with}\qquad W(x,y)\leq\Bigl(\frac{x}{y}\Bigr)^{\alpha}+\Bigl(\frac{y}{x}\Bigr)^{\alpha}\quad \text{for }\alpha\in(0,1).
\end{equation}
To simplify certain estimates we also note that~\eqref{eq:Ass:K2} in particular implies the bounds
\begin{equation}\label{eq:pert:est:weight}
 W(x,y)\lesssim \weight{-\alpha}{\alpha}(x)\weight{-\alpha}{\alpha}(y)\qquad \text{and}\qquad K_{\eps}(x,y)\lesssim \weight{-\alpha}{\alpha}(x)\weight{-\alpha}{\alpha}(y).
\end{equation}
For the latter estimate, as well as for the remainder of this article, we use implicitly that $\eps$ is bounded from above which is not really a restriction, since we can only expect that our results are true for sufficiently small $\eps$.

\begin{remark}
 We note that each bounded perturbation $W\lesssim 1$ in particular satisfies~\eqref{eq:Ass:K2} for any $\alpha\in(0,1)$.
\end{remark}

For the proof of uniqueness in the case $\alpha\geq 1/2$, we have to make an additional assumption on the perturbation $W$. In fact we need the lower bound
 \begin{equation}\label{eq:W:lower:bound}
  W(x,y)\geq c_{*}\biggl(\Bigl(\frac{x}{y}\Bigr)^{\alpha}+\Bigl(\frac{y}{x}\Bigr)^{\alpha}\biggr)
 \end{equation}
 with a constant $c_{*}>0$. This assumption leads to an exponential decay of the self-similar profiles close to zero which we have to exploit for some estimates if $\alpha\geq 1/2$.

\subsection{Notion of self-similar profiles}

The notion of self-similar solutions which we will use throughout this work follows that one in \cite{NTV16} and as outlined there, plugging the ansatz $\phi(\xi,t)=t^{-2}\prof(\xi/t)$ into~\eqref{eq:Smol} leads, up to an integration, to the equation
\begin{equation}\label{eq:selfsim}
  x^{2}\prof(x)=\int_{0}^{x}\int_{x-y}^{\infty}yK_{\eps}(y,z)\prof(y)\prof(z)\dz\dy.
\end{equation}

\begin{definition}\label{Def:profile}
 For $K_{\eps}$ satisfying \cref{eq:Ass:K1,eq:Ass:K2} with $\eps\geq 0$, a function $\pr\in L^{1}_{\text{loc}}(0,\infty)$ is denoted a self-similar solution/profile (of~\eqref{eq:Smol}) or equivalently a solution to~\eqref{eq:selfsim} provided that $\pr$ is almost everywhere non-negative, $\int_{0}^{\infty}x\pr(x)\dx<\infty$ and $\pr$ satisfies~\eqref{eq:selfsim} for almost every $x\in(0,\infty)$.
\end{definition}

We note that for each self-similar profile $\pr$ and each $c>0$ also the rescaled function $\prof^{(\eps),c}(x)\vcc=c\pr(cx)$ is a self-similar solution for~\eqref{eq:Smol}. Because of the mass-conserving property of~\eqref{eq:Smol}, the natural way to fix the parameter $c$ consists in normalising the profiles according to the total mass, i.e.\@ to prescribe the value of $\int_{0}^{\infty}x\pr(x)\dx$. This will also be done in this work and for simplicity, we normalise all self-similar profiles such that 
\begin{equation}\label{eq:normalisation}
 \int_{0}^{\infty}x\pr(x)\dx=1.
\end{equation}
The main reason for this choice is that for $\eps=0$ the unique self-similar profile is then given by (see e.g.\@ \cite{MeP04})
\begin{equation*}
 \pr[0](x)=\ee^{-x}.
\end{equation*}

\subsection{Main results}

The first main result of this work is a stability statement on self-similar profiles in the weighted $L^1$ spaces $\X{a}{b}$ for perturbations $K_{\eps}$ of the constant kernel. Precisely, we will show the following theorem.

\begin{theorem}\label{Thm:closeness:profiles:NEW}
 For $\eps>0$ let $K_{\eps}$ satisfy \cref{eq:Ass:K1,eq:Ass:K2} and let $a>-1$ and $b>0$ be given. For each $\delta>0$ there exists $\eps_{*}>0$ such that
 \begin{equation*}
  \norm{\pr-\ee^{-\cdot}}_{\X{a}{b}}\leq \delta \qquad \text{if }\eps\leq \eps_{*}
 \end{equation*}
 for each self-similar profile $\pr$ with total mass one. In particular, if $\dpr{1}$ and $\dpr{2}$ are two self-similar profiles we have $\norm{\dpr{1}-\dpr{2}}_{\X{a}{b}}\leq 2\delta \qquad \text{if }\eps\leq \eps_{\delta}$.
\end{theorem}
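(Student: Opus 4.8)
The plan is to prove the stability statement of Theorem~\ref{Thm:closeness:profiles:NEW} by a compactness-contradiction argument, exploiting the fact that for $\eps=0$ the profile $\ee^{-\cdot}$ is the unique normalised self-similar solution. Suppose the conclusion fails: then there is some $\delta_0>0$, a sequence $\eps_n\to 0$, and self-similar profiles $\pr[\eps_n]$ with total mass one such that $\norm{\pr[\eps_n]-\ee^{-\cdot}}_{\X{a}{b}}>\delta_0$ for all $n$. I would first reduce to a \emph{fixed} weighted space: it suffices to derive the contradiction for one convenient choice of $(a,b)$, since by the embedding~\eqref{eq:spaces:embedding} a bound in a space with smaller $a$ and larger $b$ controls the norm in any smaller weighted space; conversely the assumed lower bound $\norm{\pr[\eps_n]-\ee^{-\cdot}}_{\X{a}{b}}>\delta_0$ for the given $(a,b)$ would, after first establishing convergence in a weaker space and then upgrading moments, still yield a contradiction. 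So the real work is to show that along a subsequence $\pr[\eps_n]\to\ee^{-\cdot}$ in $\X{a}{b}$.

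The key inputs are the uniform a priori bounds on self-similar profiles collected in Section~\ref{Sec:previous:results} (from~\cite{NiV14a,NTV15,NTV16}), which under assumptions~\eqref{eq:Ass:K1}--\eqref{eq:Ass:K2} and the normalisation~\eqref{eq:normalisation} should give, uniformly in small $\eps$, a bound $\norm{\pr}_{\X{a'}{b'}}\lesssim 1$ for a range of exponents $a',b'$ strictly better than the target $(a,b)$ (i.e.\ $a'<a$ allowing $a'>-1$, and $b'>b$), together with enough regularity/equiintegrability to extract a limit. From such a uniform bound in $\X{a'}{b'}$ one gets weak-$L^1$ precompactness of $\{\pr[\eps_n]\}$ (Dunford--Pettis, using the tightness/equiintegrability that the stronger weight provides), so a subsequence converges weakly in $L^1_{\mathrm{loc}}$, say to some $u_\star\geq 0$; the uniform weighted bound plus Fatou gives $u_\star\in\X{a'}{b'}$ and, crucially, that the convergence actually holds \emph{strongly} in $\X{a}{b}$ because the weight gap $(a',b')$ vs.\ $(a,b)$ converts weak convergence plus uniform higher-moment control into strong convergence (a standard interpolation/truncation argument: split $(0,\infty)$ into $[\eta,R]$, where weak $L^1$ convergence on a bounded-away-from-zero compact set can be upgraded, and the two tails $x<\eta$, $x>R$, which are uniformly small by the better weight).

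Next I would pass to the limit in the defining equation~\eqref{eq:selfsim}. Writing $K_{\eps_n}=2+\eps_n W$ with $W$ satisfying~\eqref{eq:pert:est:weight}, the perturbation term is $O(\eps_n)$ against the uniformly bounded weighted norms and hence vanishes; the quadratic term $\int_0^x\int_{x-y}^\infty y\,\pr[\eps_n](y)\pr[\eps_n](z)\,\dz\,\dy$ passes to the limit using the strong $\X{a}{b}$ convergence of one factor and the weak convergence (plus uniform weighted bounds for equiintegrability of the product) of the other—this is where the double integral and the weight bookkeeping need care, but it is routine given the a priori estimates. One concludes that $u_\star$ is a self-similar profile in the sense of Definition~\ref{Def:profile}, and the normalisation $\int_0^\infty x\,u_\star(x)\,\dx=1$ survives the limit because the first moment is controlled by $\norm{\cdot}_{\X{a}{b}}$ when $a>-1$ ensures $x\cdot\weight{a}{b}^{-1}$ is not an obstruction near zero and $b>0$ handles infinity. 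By the known uniqueness for $\eps=0$ (the profile $\ee^{-\cdot}$), $u_\star=\ee^{-\cdot}$, contradicting $\norm{\pr[\eps_n]-\ee^{-\cdot}}_{\X{a}{b}}>\delta_0$.

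The main obstacle I expect is the strong-convergence upgrade in the weighted space together with the limit passage near $x=0$: the weight $\weight{a}{b}$ with $a>-1$ allows profiles that are merely mildly integrable at the origin, so one must make sure the a priori bounds from Section~\ref{Sec:previous:results} genuinely provide a strictly better exponent $a'$ (still $>-1$) uniformly in $\eps$, and that the small-$x$ tail of $\int|\pr[\eps_n]-\ee^{-\cdot}|\weight{a}{b}$ is controlled \emph{uniformly} rather than merely for each fixed $n$; otherwise the compactness argument only gives $L^1_{\mathrm{loc}}$ convergence and not the claimed $\X{a}{b}$ bound. The final sentence of the theorem is then immediate: if $\dpr{1},\dpr{2}$ are two self-similar profiles with total mass one and $\eps\leq\eps_\delta\vcc=\eps_*$, the triangle inequality gives $\norm{\dpr{1}-\dpr{2}}_{\X{a}{b}}\leq\norm{\dpr{1}-\ee^{-\cdot}}_{\X{a}{b}}+\norm{\ee^{-\cdot}-\dpr{2}}_{\X{a}{b}}\leq 2\delta$.
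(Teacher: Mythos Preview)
Your overall skeleton (contradiction, uniform moment bounds in a stronger space, tail control via the weight gap) matches the paper, but the core step has a genuine gap. You write that ``weak $L^1$ convergence on a bounded-away-from-zero compact set can be upgraded'' and that the weight gap ``converts weak convergence plus uniform higher-moment control into strong convergence.'' This is not true in general: uniform bounds in $\X{a'}{b'}$ with $a'<a$, $b'>b$ give you tightness of the tails in $\X{a}{b}$, but on the middle region $[\eta,R]$ Dunford--Pettis only yields weak $L^1$ compactness, and weak $L^1$ convergence on a compact set does \emph{not} upgrade to strong $L^1$ without an additional ingredient (pointwise a.e.\ convergence, equicontinuity, etc.). Your plan to supply that ingredient by passing to the limit in~\eqref{eq:selfsim} is circular: you propose to handle the bilinear term ``using the strong $\X{a}{b}$ convergence of one factor,'' but strong convergence is precisely what you are trying to establish. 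With both factors only converging weakly, the product does not pass to the limit without further structure.

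The paper closes exactly this gap, and this is where the real work lies. Rather than abstract compactness, it proves \emph{pointwise} convergence $\pr[\eps_k](x)\to\ee^{-x}$ for every $x>0$ (Lemma~\ref{Lem:pointwise:conv:mass:density:NEW}) by a direct analysis of~\eqref{eq:selfsim}: starting from the Laplace-transform convergence of Lemma~\ref{Lem:convergence:Laplace}, one obtains weak measure convergence of $\pr[\eps_k]$ and of $x\pr[\eps_k]$ (Proposition~\ref{Prop:weak:convergence:NEW}), then uniform convergence of the primitives $y\mapsto\int_0^{x-y}\pr[\eps_k]$ on $[0,x]$ via Arzel\`a--Ascoli (Lemma~\ref{Lem:primitive:uniform:convergence:NEW}), and these together let one pass to the limit in the right-hand side of~\eqref{eq:selfsim} for each fixed $x$. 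With pointwise convergence in hand and the uniform pointwise domination $\pr(x)\lesssim\weight{-\alpha-\nu}{0}(x)\ee^{-ax}$ of Lemma~\ref{Lem:profiles:unfiorm:bound}, dominated convergence gives $L^1$ convergence (Proposition~\ref{Prop:L1:convergence:NEW}); the upgrade to $\X{a}{b}$ is then exactly your three-region splitting using Lemma~\ref{Lem:moments} on the tails. So the missing idea in your proposal is the pointwise convergence step, which replaces the unjustified weak-to-strong upgrade.
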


Our second main result is the following statement on uniqueness of self-similar profiles.

\begin{theorem}\label{Thm:uniqueness}
For $\eps>0$ let $K_{\eps}$ satisfy \cref{eq:Ass:K1,eq:Ass:K2}. If $\alpha\geq 1/2$, assume in addition that $W$ satisfies~\eqref{eq:W:lower:bound}. Then, if $\eps>0$ is sufficiently small there exists at most one self-similar profile which is normalised according to~\eqref{eq:normalisation}.
\end{theorem}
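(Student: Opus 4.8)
The plan is to prove uniqueness by a contraction/fixed-point argument carried out directly in the weighted $L^1$ space, using the stability result (Theorem~\ref{Thm:closeness:profiles:NEW}) as the starting point. First I would reformulate the self-similar equation~\eqref{eq:selfsim} in the form $\LL \pr = \eps\, N_{\eps}[\pr]$, where $\LL$ is the linearised coagulation operator around the explicit profile $\ee^{-\cdot}$ and $N_{\eps}$ collects the genuinely nonlinear terms together with the contribution of the perturbation $W$; equivalently one writes $\pr = \ee^{-\cdot} + \LL^{-1}\bigl(\eps\, N_{\eps}[\pr] + (\text{quadratic remainder})\bigr)$. Because the profiles all have total mass one, their difference lies in the subspace $\X[0]{a}{b}$ with vanishing first moment, which is precisely where $\LL$ is injective and — invoking Proposition~\ref{Prop:continuity:L:inverse} — boundedly invertible between suitable weighted spaces. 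The key is therefore to set up the correct pair of weights: one loses a little decay/growth when applying $N_{\eps}$ (the perturbation $W$ costs a factor $\weight{-\alpha}{\alpha}$ in each variable by~\eqref{eq:pert:est:weight}), and one must check that $\LL^{-1}$ recovers it, so the weights $(a,b)$ in the domain and range have to be chosen compatibly with the mapping properties established earlier.

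The core step is the contraction estimate. Given two normalised profiles $\dpr{1}, \dpr{2}$, subtract the two fixed-point identities to get
\begin{equation*}
 \dpr{1} - \dpr{2} = \LL^{-1}\Bigl(\eps\bigl(N_{\eps}[\dpr{1}] - N_{\eps}[\dpr{2}]\bigr) + \bigl(\text{quadratic terms in } \dpr{1},\dpr{2}\bigr)\Bigr).
\end{equation*}
The quadratic terms are bilinear expressions of the schematic form $B(\dpr{1}-\dpr{2}, \dpr{i})$, which are controlled by $\norm{\dpr{1}-\dpr{2}}$ times $\norm{\dpr{i}}$; since by Theorem~\ref{Thm:closeness:profiles:NEW} each $\dpr{i}$ is within $\delta$ of $\ee^{-\cdot}$ in the relevant weighted norm, and the mass-one normalisation gives an a~priori bound, the prefactor multiplying $\norm{\dpr{1}-\dpr{2}}$ is of size $O(\eps + \delta)$. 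Choosing $\eps$ (hence $\delta$ via the theorem) small enough makes this prefactor strictly less than one, forcing $\dpr{1} = \dpr{2}$. One must be careful that the bilinear coagulation operator is indeed bounded on the chosen weighted spaces — this is where~\eqref{eq:weight:additivity}, \eqref{eq:weight:regularising} and the Lebesgue-type estimates~\eqref{eq:spaces:embedding} enter — and that $\LL^{-1}$ maps into a space at least as strong as the one in which the nonlinearity is estimated, so the iteration closes.

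The role of the extra hypothesis~\eqref{eq:W:lower:bound} when $\alpha \geq 1/2$ deserves separate attention. For $\alpha < 1/2$ the weight $\weight{-\alpha}{\alpha}$ lost from $W$ is mild enough that the bilinear terms remain integrable near the origin against the weights one is allowed to use, and no lower bound is needed. For $\alpha \geq 1/2$, however, the loss near $x=0$ is borderline, and one repairs this by exploiting that~\eqref{eq:W:lower:bound} forces the profile itself to decay exponentially (not merely polynomially) as $x\to 0^+$; this extra decay of $\pr$ compensates the singular weight and restores the needed integrability. Concretely I would feed this exponential lower-order behaviour — presumably established among the preparatory results of \cref{Sec:preparation:uniqueness} (it is alluded to right after~\eqref{eq:W:lower:bound}) — into the estimates for $N_{\eps}$, treating the near-origin and near-infinity regions separately via the piecewise definition~\eqref{eq:def:weight} of the weight.

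The main obstacle I expect is not the abstract contraction scheme but the bookkeeping of weights: one needs a single pair $(a,b)$ (with $a\le 1$, $b\ge 1$ so that $\X[0]{a}{b}$ is defined and $\LL$ is injective there) for which simultaneously (i) Proposition~\ref{Prop:continuity:L:inverse} gives boundedness of $\LL^{-1}$ with a gain matching the loss from $W$, (ii) the bilinear coagulation operator is bounded, and (iii) Theorem~\ref{Thm:closeness:profiles:NEW} applies (which only requires $a>-1$, $b>0$, so is the least restrictive). Verifying that such a window of exponents is non-empty — and, for $\alpha\ge 1/2$, that the exponential decay from~\eqref{eq:W:lower:bound} is exactly what is missing — is the delicate technical heart of the argument; once the function-space framework is pinned down, the contraction estimate and the conclusion $\dpr{1}=\dpr{2}$ follow in a few lines.
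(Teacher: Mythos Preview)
Your overall architecture---linearise around $\ee^{-\cdot}$, invert $\LL$ on $\X[0]{a}{b}$, and aim for a contraction---matches the paper, but the scheme as you describe it does not close. The difficulty is precisely the point you flag as ``bookkeeping of weights'': you hope that Proposition~\ref{Prop:continuity:L:inverse} provides ``boundedness of $\LL^{-1}$ with a gain matching the loss from $W$'', but in fact $\LL^{-1}$ has \emph{no} gain: it maps $\X{a}{b}$ to $\X[0]{a}{b}$ with the same weights. Meanwhile $\B_{W}$ genuinely loses one power of $x$ at the origin (Proposition~\ref{Prop:continuity:BW}: $\X{-\alpha}{\beta}\times\X{-\alpha}{\beta}\to\X{1-\alpha}{\beta}$, not $\X{-\alpha}{\beta}$). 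Hence $\LL^{-1}\bigl[\eps\B_{W}[\cdots]\bigr]$ lives only in $\X{1-\alpha}{\beta}$, and there is no admissible pair $(a,b)$ for which the fixed-point map sends $\X{a}{b}$ back into itself; the window of exponents you hope to find is empty.

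The paper repairs this with an extra structural idea that your proposal does not contain. One splits $\dpr{1}-\dpr{2}=\ee^{-\cdot}(\dpr{1}-\dpr{2})+(1-\ee^{-\cdot})(\dpr{1}-\dpr{2})$ and runs two different arguments. For the second piece, multiplying the fixed-point identity by $(1-\ee^{-\cdot})$ gains exactly one power of $x$ near zero (Lemma~\ref{Lem:norm:regularising}), which compensates the loss from $\B_{W}$ and yields $\norm{(1-\ee^{-\cdot})(\dpr{1}-\dpr{2})}_{\X{-\alpha}{\beta}}\lesssim o(\eps)\norm{\dpr{1}-\dpr{2}}_{\X{-\alpha}{\beta}}$. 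The first piece, however, cannot be handled through $\LL^{-1}$ at all: it requires the separate \emph{boundary layer estimate} (Proposition~\ref{Prop:bound:layer:est}, proved in Section~\ref{Sec:proof:bl}), obtained by rewriting~\eqref{eq:selfsim} as a first-order integral equation for $\pr$ and estimating the difference directly. It is in this boundary layer analysis---not in the $\LL^{-1}$ step---that the lower bound~\eqref{eq:W:lower:bound} is used for $\alpha\geq 1/2$, via the exponential-decay Lemmas~\ref{Lem:diff:Phi:lower:bound}--\ref{Lem:reg:ef:diff:Phi}. Only after combining both halves does one obtain a genuine contraction on $\norm{\dpr{1}-\dpr{2}}_{\X{-\alpha}{\beta}}$.
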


\subsection{Comparison to the uniqueness statement proved in \cite{NTV15,NTV16}}\label{Sec:comparison:old:proof}

Theorem~\ref{Thm:uniqueness} is an extension and improvement of the corresponding statement in \cite{NTV15,NTV16} in several respects. First of all, the proof presented here relies on estimates in the weighted $L^1$ spaces $\X{a}{b}$ which makes it much easier than the one in \cite{NTV15,NTV16}. In fact, although the abstract strategy, which can be interpreted as a non-linear version of the implicit function theorem, is still the same, the different choice of the topology simplifies the proof remarkably. More precisely, in \cite{NTV15,NTV16} uniqueness was shown by proving a contraction inequality for the Laplace transform of two self-similar profiles in a weighted $C^{2}$ norm. The advantage of this approach was that the stability of the profiles, i.e.\@ the analogue of Theorem~\ref{Thm:closeness:profiles:NEW} as well as the inversion of the linearised operator (the analogue of Proposition~\ref{Prop:continuity:L:inverse} below) could be obtained much easier. However, as a consequence, all functionals had to be expressed in terms of the Laplace transform which required first to write also the perturbation $W$ itself as Laplace transform of a suitable kernel. For this, extremely strong regularity assumptions were needed. In fact, in addition to \cref{eq:Ass:K1,eq:Ass:K2} it was required that $W(\cdot,1)$ is analytic in $\C\setminus (-\infty,0]$ and can be extended to a $C^{1,\gamma}$ function both on the closed upper and lower complex half-plane. These conditions were accompanied by several estimates on $W$ and its derivative (see (1.10)--(1.16) in~\cite{NTV15} for the precise assumptions). Similarly, proving an analogue to Proposition~\ref{Prop:bound:layer:est} required to represent a certain non-linear expression as a Laplace integral which made the corresponding argument extremely long and technical. Furthermore, the latter proof only worked if the exponent $\alpha$ in the perturbation was restricted to $\alpha<1/2$ while it remained unclear if this is only for technical problems or if uniqueness might really fail for $\alpha\geq 1/2$. 

As already mentioned before, we choose a different functional setup in this work, i.e.\@ we work with weighted $L^1$ spaces instead of the Laplace transform. This allows to get rid off most of the technical problems described before. Precisely, we can relax the assumptions on the perturbation $W$ by requiring only \cref{eq:Ass:K1,eq:Ass:K2} while analyticity is no longer needed. Furthermore, we can extend the uniqueness result also to the case $\alpha\in[1/2,1)$ which was not clear to be true before. Let us again emphasise here that even though Theorem~\ref{Thm:uniqueness} gives a much stronger result compared to that one in \cite{NTV15,NTV16}, the corresponding proof in summary is even simpler and much shorter. Of course, since we follow the same abstract strategy, some proofs are still similar to those in \cite{NTV15,NTV16}, however, let us finally summarise here the most important differences. First of all, the statement of the stability of self-similar profiles (Theorem~\ref{Thm:closeness:profiles:NEW}) is now much stronger since it shows closeness of the profiles in the strong $L^1$ topology instead of for the Laplace transform. As a consequence, also the proof gets more involved. Moreover, the inversion of the linearised operator (Proposition~\ref{Prop:continuity:L:inverse}) requires a different argument compared to \cite{NTV15,NTV16} and also becomes more technical. Conversely, as already explained before, the proof of Proposition~\ref{Prop:bound:layer:est}, though still relatively long and technical, is now much shorter and simpler than the one for the corresponding result in \cite{NTV15,NTV16} where it occupied more than half of the article.

\section{Previous results and easy consequences}\label{Sec:previous:results}

In this section we collect several results which have been obtained in~\cite{NiV14a,NTV15,NTV16} or which are easy consequences of such results and on which we will rely in this work.

The first such statement concerns the following lemma which provides uniform convergence of the (desingularised) Laplace transform of self-similar profiles and which is contained in~\cite[Lemma~2.8]{NiV14}.

\begin{lemma}\label{Lem:convergence:Laplace}
 Under the conditions of Theorem~\ref{Thm:closeness:profiles:NEW} we have
 \begin{equation*}
  \lim_{\eps\to 0}\sup_{p\in[0,\infty)}\abs*{\int_{0}^{\infty}(1-\ee^{-px})(\pr(x)-\ee^{-x})\dx}=0.
 \end{equation*}
\end{lemma}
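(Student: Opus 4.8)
The plan is to pass to the Laplace transform and to exploit that the limiting equation is a simple ODE with a unique, explicit solution. For a self-similar profile $\pr$ normalised by~\eqref{eq:normalisation} set
\begin{equation*}
 \J_\eps(p)\vcc=\int_0^\infty(1-\ee^{-px})\pr(x)\dx,\qquad p\geq 0.
\end{equation*}
Since $1-\ee^{-px}\leq px$ and $\pr$ has finite first moment, $\J_\eps$ is finite, and $\J_\eps'(p)=\int_0^\infty x\ee^{-px}\pr(x)\dx\in[0,1]$ by~\eqref{eq:normalisation}; hence the family $\{\J_\eps\}$ is uniformly $1$-Lipschitz on $[0,\infty)$ with $\J_\eps(0)=0$, and $\J_\eps'(0)=1$. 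For $\eps=0$ one computes $\J_0(p)=p/(1+p)$, and the assertion of the lemma is exactly the uniform convergence $\J_\eps\to\J_0$ on $[0,\infty)$ as $\eps\to 0$.

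First I would derive a closed relation for $\J_\eps$. Multiplying~\eqref{eq:selfsim} by $\ee^{-px}$, integrating in $x$, and interchanging the order of integration — the substitution $x\mapsto x-y$ together with the support constraint turns the inner $x$-integral into $\int_y^{y+z}\ee^{-px}\dx=p^{-1}\ee^{-py}(1-\ee^{-pz})$ — one arrives, writing $K_\eps=2+\eps W$, at the identity $-p\,\J_\eps''(p)=(\J_\eps^2)'(p)+\eps R_\eps(p)$ with
\begin{equation*}
 R_\eps(p)\vcc=\int_0^\infty\int_0^\infty yW(y,z)\pr(y)\pr(z)\,\ee^{-py}\bigl(1-\ee^{-pz}\bigr)\dz\dy.
\end{equation*}
An elementary rearrangement (using $p\J_\eps''=(p\J_\eps')'-\J_\eps'$) and one integration in $p$, the boundary terms at $p=0$ vanishing because $\J_\eps'$ is bounded, turn this into
\begin{equation*}
 p\,\J_\eps'(p)=\J_\eps(p)\bigl(1-\J_\eps(p)\bigr)-\eps S_\eps(p),\qquad S_\eps(p)\vcc=\int_0^p R_\eps(s)\dd{s}.
\end{equation*}
For $\eps=0$ this is the separable ODE $p\,\J_0'=\J_0(1-\J_0)$ with $\J_0(0)=0$, $\J_0'(0)=1$, whose unique solution is indeed $p/(1+p)$.

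Next I would estimate the error $S_\eps$ uniformly in $\eps$. Using $W(y,z)\lesssim\weight{-\alpha}{\alpha}(y)\weight{-\alpha}{\alpha}(z)$ from~\eqref{eq:pert:est:weight}, the $z$-integral in $R_\eps$ is bounded, uniformly in $p$ and $\eps$, by $\int_0^1 z^{-\alpha}\pr(z)\dz+\int_1^\infty z\pr(z)\dz\lesssim 1$ (here $\alpha<1$ and the a~priori bounds on profiles recalled in \cref{Sec:previous:results} are used), while the $y$-integral equals $\int_0^\infty\weight{1-\alpha}{1+\alpha}(y)\pr(y)\ee^{-py}\dy$, which is at most $\norm{\pr}_{\X{1-\alpha}{1+\alpha}}$ for $p\leq 1$ and, splitting at $y=1$ and using that $\pr$ is bounded near $0$, is $\lesssim p^{-(2-\alpha)}$ for $p\geq 1$. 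Hence $R_\eps(p)\lesssim\min\{1,p^{-(2-\alpha)}\}$ uniformly in $\eps$, and since $2-\alpha>1$ this is integrable on $(0,\infty)$, so that $\sup_{p\geq 0}\abs{\eps S_\eps(p)}\leq\eps\int_0^\infty R_\eps(s)\dd{s}\lesssim\eps\to 0$.

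Then I would conclude by a compactness argument. By the Arzel\`a--Ascoli theorem and a diagonal extraction over the intervals $[0,n]$, every sequence $\eps_n\to 0$ has a subsequence along which $\J_{\eps_n}\to\J_*$ locally uniformly on $[0,\infty)$, with $\J_*$ Lipschitz and $\J_*(0)=0$; writing the relation above in integrated form on compact subintervals of $(0,\infty)$ and letting $\eps_n\to 0$ gives $\J_*\in C^1(0,\infty)$ with $p\,\J_*'=\J_*(1-\J_*)$. The solutions of this ODE with $\J_*(0)=0$ are precisely $\J_*(p)=Cp/(1+Cp)$, and $C=1$ is forced: the lower bound $\J_{\eps_n}'(s)\geq\ee^{-sM}\bigl(1-\int_M^\infty x\pr[\eps_n](x)\dx\bigr)$ together with the equi-integrability of $\{x\pr\}$ yields $\J_*(p)/p\to 1$ as $p\to 0^+$. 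Thus every subsequential limit equals $\J_0$, so $\J_\eps\to\J_0$ locally uniformly. To upgrade this to uniform convergence on $[0,\infty)$ one controls the tail: uniform integrability of $\{\pr\}$ near the origin and a uniform bound on $\int_0^\infty\pr(x)\dx$ give $\int_0^\infty\ee^{-px}\pr(x)\dx\to 0$ as $p\to\infty$ uniformly in $\eps$, and since $\J_\eps(p)=\int_0^\infty\pr(x)\dx-\int_0^\infty\ee^{-px}\pr(x)\dx$ and (as just shown) $\int_0^\infty\pr(x)\dx\to 1$, the difference $\J_\eps(p)-\J_0(p)$ is uniformly small for all large $p$ once $\eps$ is small; combining with local uniform convergence on the remaining bounded range of $p$ finishes the proof.

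The hard part is making the first step rigorous: the interchange of integrals, the differentiations in $p$, the estimate $\int_0^\infty R_\eps\lesssim 1$, and the identification of the limit all rest on \emph{uniform-in-$\eps$} a~priori information about self-similar profiles — finiteness and uniform bounds for the weighted moment $\norm{\pr}_{\X{1-\alpha}{1+\alpha}}$, boundedness of $\pr$ near $0$, and equi-integrability of $x\pr$ near infinity and of $\pr$ near $0$ — which must be imported from \cite{NiV14} and the results collected in \cref{Sec:previous:results}. (Indeed, as stated, this lemma is exactly \cite[Lemma~2.8]{NiV14}.) A secondary point, addressed above, is that compactness alone only gives local uniform convergence, so the tail estimate is genuinely needed to obtain the supremum over all $p\in[0,\infty)$.
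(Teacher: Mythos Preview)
The paper does not prove this lemma; it simply cites \cite[Lemma~2.8]{NiV14}. Your approach---derive the ODE $p\,\J_\eps'=\J_\eps(1-\J_\eps)-\eps S_\eps$ for the desingularised Laplace transform from~\eqref{eq:selfsim}, apply Arzel\`a--Ascoli, identify the limit via the limiting ODE, and finish with a tail estimate---is the standard route to such a statement and is essentially correct.

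There is, however, one genuine slip. You assert that ``$\pr$ is bounded near $0$'' in order to get $R_\eps(p)\lesssim p^{-(2-\alpha)}$ and hence $\int_0^\infty R_\eps(s)\dd{s}<\infty$. But the profiles are in general \emph{not} bounded at the origin: the available a~priori bound (Lemma~\ref{Lem:profiles:unfiorm:bound}) gives only $\pr(x)\lesssim x^{-\alpha-\nu}$ near zero, which yields at best $R_\eps(p)\lesssim p^{-(2-2\alpha-\nu)}$, and this fails to be integrable once $\alpha\geq 1/2$. Fortunately, global integrability of $R_\eps$ is not actually needed. For the compactness step and the identification of the limiting ODE you only require $\eps S_\eps\to 0$ locally uniformly, and the crude bound $R_\eps\lesssim 1$ (so $\abs{S_\eps(p)}\lesssim p$) already gives this; the upgrade from local to global uniform convergence is handled entirely by your separate tail argument, which does not involve $S_\eps$ at all. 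A second, smaller imprecision: the claim ``as just shown, $\int_0^\infty\pr(x)\dx\to 1$'' has not in fact been established at that point, but it does follow once local uniform convergence of $\J_\eps$ to $\J_0$ and the uniform tail control are both in hand---write $\int_0^\infty\pr(x)\dx=\J_\eps(p_0)+\int_0^\infty\ee^{-p_0 x}\pr(x)\dx$, choose $p_0$ large to make the second term uniformly small, and let $\eps\to 0$. With these two repairs your argument goes through.
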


The next result provides uniform exponential decay at infinity for self-similar profiles and can be found in~\cite[Lemma~2.5]{NiV14}.

\begin{lemma}\label{Lem:exp:decay}
 There exists a constant $a>0$ such that
 \begin{equation*}
  \pr(x)\lesssim \ee^{-ax}\quad \text{for all }x\geq 1.
 \end{equation*}
\end{lemma}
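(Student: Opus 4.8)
The plan is to pass to the differential form of~\eqref{eq:selfsim}. Differentiating~\eqref{eq:selfsim} in~$x$ — legitimate by the regularity of self-similar profiles (\cref{Sec:regularity:profiles}) — and using the symmetry and degree-zero homogeneity of $K_{\eps}$ to symmetrise the double integral, one obtains, after dividing by~$x$,
\begin{equation*}
 \del_{x}\bigl(x\,\pr(x)\bigr)=\pr(x)\,\bigl(\ell_{\eps}(x)-1\bigr)-\frac12\int_{0}^{x}K_{\eps}(x-z,z)\,\pr(x-z)\,\pr(z)\dz,\qquad \ell_{\eps}(x)\vcc=\int_{0}^{\infty}K_{\eps}(x,z)\,\pr(z)\dz.
\end{equation*}
The structural point is that the loss rate obeys $\ell_{\eps}(x)\geq 2\mom_{0}$ with $\mom_{0}\vcc=\int_{0}^{\infty}\pr(z)\dz$, and that $\mom_{0}<\infty$ (so $\pr\in L^{1}(0,\infty)$) with $\mom_{0}\to1$ as $\eps\to0$; both of the latter follow from~\cref{Lem:convergence:Laplace} by letting $p\to\infty$, since $\int_{0}^{\infty}(1-\ee^{-px})\pr(x)\dx\to\mom_{0}$ and $\int_{0}^{\infty}(1-\ee^{-px})\ee^{-x}\dx\to1$. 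Thus $2\mom_{0}-1\geq\tfrac12$ for $\eps$ small, and the displayed identity is a small perturbation of its $\eps=0$ version $\del_{x}(x\pr(x))=(2\mom_{0}-1)\pr(x)-(\pr\ast\pr)(x)$: the $\eps$-contributions of $W$ to $\ell_{\eps}$ and to the gain integral cancel to leading order (as $W(x-z,z)\approx W(x,z)$ and $\pr(x-z)\approx\pr(x)$ for small~$z$), and the remainder is handled by~\eqref{eq:Ass:K2} together with the finiteness of $\int_{0}^{\infty}z^{\pm\alpha}\pr(z)\dz$ (available e.g.\ from~\cref{Thm:closeness:profiles:NEW}).

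From here I argue in two stages. \emph{Stage one (rough polynomial decay).} For $\eps=0$ the identity reads $x\,\del_{x}\pr(x)=-(\pr\ast\pr)(x)\leq0$, so $\pr$ is non-increasing, and this persists for $\eps$ small and $x$ large. Combined with $\int_{x/2}^{\infty}\pr\to0$, monotonicity yields $2\mom_{0}\bigl(1-o(1)\bigr)\pr(x)\leq(\pr\ast\pr)(x)\leq2\mom_{0}\,\pr(x/2)$ as $x\to\infty$; feeding the lower bound into the equation gives $\del_{x}(x\pr(x))\leq-\bigl(1-o(1)\bigr)\pr(x)$, hence $\pr(x)\lesssim x^{-p}$ for a suitable $p>1$. \emph{Stage two (upgrade to exponential decay).} One reinserts the available decay into the convolution. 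The key dichotomy is: if $\pr$ decayed only polynomially (or sub-exponentially), then $\int_{0}^{x}K_{\eps}(x-z,z)\pr(x-z)\pr(z)\dz$ would be concentrated near the endpoints $z\in\{0,x\}$, hence comparable to $2\mom_{0}\pr(x)$, so $\del_{x}(x\pr(x))\approx-\pr(x)$, i.e.\ $x\,\del_{x}\pr(x)\approx-2\pr(x)$, forcing $\pr(x)\sim x^{-2}$ and contradicting $\int_{0}^{\infty}x\pr(x)\dx=1$; and if $\pr$ decayed faster than exponentially, the gain would be too large to balance the left-hand side. Making this quantitative — iterating the estimate while splitting the convolution into a ``one large, one bounded cluster'' part (controlled linearly through the finite lower moments of $\pr$) and a ``two comparable clusters'' part (quadratically small in the tails), with a small rate $\lambda>0$ chosen in terms of the uniform constants above — yields $\int_{1}^{\infty}\ee^{\lambda x}\pr(x)\dx<\infty$. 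Finally, estimating~\eqref{eq:selfsim} for large $x$ by $x^{2}\pr(x)\lesssim\int_{x/2}^{\infty}y^{1+\alpha}\pr(y)\dy\leq\sup_{y\geq x/2}\bigl(y^{1+\alpha}\ee^{-\lambda y/2}\bigr)\int_{1}^{\infty}\ee^{\lambda y/2}\pr(y)\dy\lesssim x^{1+\alpha}\ee^{-\lambda x/2}$ gives $\pr(x)\lesssim x^{\alpha-1}\ee^{-\lambda x/2}\lesssim\ee^{-ax}$ for $x\geq1$, with $a\vcc=\lambda/4$.

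The hard part is exactly this upgrade from polynomial to exponential decay. The coagulation gain is non-local in scale — it couples the size of $\pr$ at $x$ to its size at all smaller scales — so the tail recursions one naturally writes down, of the type $\pr(x)\lesssim\int_{x/2}^{\infty}s^{-1}\pr(s)\,\dd{s}$, are scale-invariant and are satisfied by polynomially decaying (indeed by $\ee^{-c(\log x)^{2}}$-decaying) functions just as well as by exponentially decaying ones; by themselves they do not determine the rate. What breaks this degeneracy is the combination of the \emph{global} mass constraint $\int_{0}^{\infty}x\pr(x)\dx=1$ with the endpoint-versus-bulk decomposition of the convolution and with the strict gap $2\mom_{0}-1\gtrsim1$ between loss and linear growth; carrying the unbounded perturbation $W$ through all of these bounds uniformly in $\eps$ — in particular the leading-order cancellation of its contributions to $\ell_{\eps}$ and to the gain — is the further technical point, for which the pointwise bound~\eqref{eq:Ass:K2} and the finiteness of the lower moments of $\pr$ are what is used.
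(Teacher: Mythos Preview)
The paper does not prove this lemma; it quotes it verbatim from \cite[Lemma~2.5]{NiV14}. So there is no in-paper argument to compare against, only the external reference.

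That said, your proposal as written has a structural defect: you invoke the differentiability from \cref{Sec:regularity:profiles} and the moment bounds from \cref{Thm:closeness:profiles:NEW}, but in the paper's logical order both of these sit \emph{downstream} of the present lemma. Indeed, \cref{Lem:moments} explicitly uses \cref{Lem:exp:decay} to control $\int_1^\infty x^b\pr(x)\dx$, and \cref{Lem:moments} in turn underlies \cref{Lem:profiles:unfiorm:bound}, \cref{Prop:L1:convergence:NEW}, \cref{Thm:closeness:profiles:NEW}, and the regularity proofs in \cref{Sec:regularity:profiles}. So your argument is circular unless you supply independent proofs of differentiability and of $\int_0^\infty z^{\pm\alpha}\pr(z)\dz<\infty$ that do not already presuppose exponential tails.

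Beyond the circularity, Stage~two is a heuristic rather than a proof. The dichotomy ``polynomial decay forces $\pr\sim x^{-2}$, contradicting finite mass'' is suggestive but not an argument: you never exhibit the iteration that produces $\int_1^\infty \ee^{\lambda x}\pr(x)\dx<\infty$, and the sentence ``the $\eps$-contributions of $W$ to $\ell_\eps$ and to the gain integral cancel to leading order'' is unsupported --- for the unbounded perturbation~\eqref{eq:Ass:K2} there is no reason the pointwise approximations $W(x-z,z)\approx W(x,z)$ and $\pr(x-z)\approx\pr(x)$ should hold uniformly over the convolution range. The actual proof in \cite{NiV14} proceeds differently (via a Laplace-transform/moment argument that bypasses these pointwise issues), and if you want a self-contained argument here you would need to make the exponential-moment bootstrap genuinely quantitative and remove the forward references.
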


Moreover, we recall from~\cite[Lemma~2.4]{NiV14} a certain regularity of self-similar profiles close to zero in a weak form.

\begin{lemma}\label{Lem:regularity:zero}
 For any $\eta>0$ and sufficiently small $\eps>0$ we have
 \begin{equation*}
  \int_{\rho}^{2\rho}\pr(x)\dx\lesssim \rho^{1-\eta} \qquad \text{for all }\rho>0.
 \end{equation*}
\end{lemma}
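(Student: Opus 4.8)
The plan is to deduce the estimate from a Gronwall-type lower bound on the logarithmic derivative of the truncated first moment $J(\rho)\vcc=\int_{0}^{\rho}x\pr(x)\dx$, exploiting that near the origin the coagulation in~\eqref{eq:selfsim} is dominated by the constant part of $K_{\eps}$. First I would record that $\pr\in L^{1}(0,\infty)$ with $\norm{\pr}_{L^{1}}$ close to $1$ when $\eps$ is small: since $\pr\geq 0$ and $1-\ee^{-px}\uparrow 1$ pointwise as $p\to\infty$, monotone convergence gives $\norm{\pr}_{L^{1}}=\lim_{p\to\infty}\int_{0}^{\infty}(1-\ee^{-px})\pr(x)\dx$, and by Lemma~\ref{Lem:convergence:Laplace} this differs by at most $\delta(\eps)\vcc=\sup_{p\geq 0}\abs{\int_{0}^{\infty}(1-\ee^{-px})(\pr(x)-\ee^{-x})\dx}$ from $\lim_{p\to\infty}\int_{0}^{\infty}(1-\ee^{-px})\ee^{-x}\dx=1$, where $\delta(\eps)\to 0$. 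Consequently $\int_{0}^{\rho}\pr\to 0$ as $\rho\to 0$, so for any $\eta>0$ I can choose $\eps$ small and then a scale $\rho_{1}=\rho_{1}(\eps,\eta)\in(0,1]$ such that $N(\rho)\vcc=\int_{\rho}^{\infty}\pr=\norm{\pr}_{L^{1}}-\int_{0}^{\rho}\pr\geq 1-\tfrac{\eta}{2}$ for all $\rho\in(0,\rho_{1}]$. Since $\rho\geq 1$ is handled by Lemma~\ref{Lem:exp:decay} (which gives $\int_{\rho}^{2\rho}\pr\lesssim\ee^{-a\rho}\lesssim\rho^{1-\eta}$) and $\rho_{1}/2\leq\rho\leq 1$ by $\int_{\rho}^{2\rho}\pr\leq\norm{\pr}_{L^{1}}\lesssim 1$ (with a constant depending on $\rho_{1}$ and $\eta$), it remains to treat $\rho\leq\rho_{1}/2$.

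For this, I would use that by~\eqref{eq:selfsim} and $K_{\eps}\geq 2$ (or $K_{\eps}\geq 2-C\eps$ if $W$ is only bounded below, which is harmless for small $\eps$), together with the monotonicity of $N$,
\begin{equation*}
 x^{2}\pr(x)=\int_{0}^{x}y\pr(y)\int_{x-y}^{\infty}K_{\eps}(y,z)\pr(z)\dz\dy\geq 2\int_{0}^{x}y\pr(y)\,N(x-y)\dy\geq 2N(x)\,J(x).
\end{equation*}
Since $J$ is absolutely continuous with $J'(x)=x\pr(x)$, on $\{x:J(x)>0\}$ — whose complement is an initial interval on which $\pr$ vanishes a.e.\@ and where the assertion is trivial — this reads $(\log J)'(x)=\frac{x^{2}\pr(x)}{xJ(x)}\geq\frac{2N(x)}{x}\geq\frac{2-\eta}{x}$ for $x\leq\rho_{1}$, and integrating from $x$ up to $\rho_{1}$ yields $J(x)\leq J(\rho_{1})\,(x/\rho_{1})^{2-\eta}$ on $(0,\rho_{1}]$. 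Hence, for $\rho\leq\rho_{1}/2$,
\begin{equation*}
 \int_{\rho}^{2\rho}\pr(x)\dx\leq\frac{1}{\rho}\int_{\rho}^{2\rho}x\pr(x)\dx\leq\frac{J(2\rho)}{\rho}\leq 2^{2-\eta}J(\rho_{1})\,\rho_{1}^{\eta-2}\,\rho^{1-\eta},
\end{equation*}
which is the claimed bound (with a constant depending on $\eta$ and $\eps$).

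The only genuinely delicate point is the gain of the exponent. The raw inequality $x^{2}\pr(x)\gtrsim\int_{0}^{x}y\pr(y)\dy$ is invariant under the self-similar rescaling $\pr(x)\mapsto\lambda^{2}\pr(\lambda x)$, so on its own it cannot improve the trivial bound $\int_{\rho}^{2\rho}\pr\lesssim 1$; what rescues the argument is the \emph{strict} factor $2N(x)\to 2>1$ multiplying $J(x)$, which turns the inequality into $(\log J)'\geq c/x$ with $c>1$ and thereby forces the almost-quadratic decay of $J$. The loss $\eta$ is intrinsic to this scheme, since $2N(x)$ attains the value $2$ only in the limit $x\to 0$: for $\eps=0$ one has the exact asymptotics $x^{2}\ee^{-x}\sim 2N(x)J(x)$ as $x\to 0$, so the borderline exponents ($2$ for $J$, equivalently $1$ for $\int_{\rho}^{2\rho}\pr$) are not reachable by this bound.
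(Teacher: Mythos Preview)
Your Gronwall argument on $J(\rho)=\int_0^\rho x\pr(x)\,dx$ is the standard route and is essentially how the result is obtained in \cite{NiV14} (the paper under discussion does not give its own proof but merely quotes that reference). The core chain
\[
x^2\pr(x)\ \geq\ 2\int_0^x y\pr(y)\,N(x-y)\,dy\ \geq\ 2N(x)J(x)
\quad\Longrightarrow\quad (\log J)'(x)\geq \frac{2N(x)}{x}
\]
is correct, and your remark about why the strict factor $2N(x)\to 2>1$ is what produces the exponent gain is well observed.

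There is, however, one genuine gap. You explicitly allow the final constant to depend on $\eps$ (through $\rho_1=\rho_1(\eps,\eta)$), but the lemma is used in the paper precisely to obtain moment bounds that are \emph{uniform} in $\eps\in(0,\eps_*]$ (Lemma~\ref{Lem:moments}); an $\eps$-dependent constant is useless there. The difficulty is that your only justification for $\int_0^\rho\pr\to 0$ is the bare fact $\pr\in L^1$, whose rate of vanishing is a priori not uniform in $\eps$. The fix is to exploit Lemma~\ref{Lem:convergence:Laplace} quantitatively rather than just in the limit: for $x\leq\rho$ one has $\ee^{-x/\rho}\geq \ee^{-1}$, hence
\[
\int_0^\rho \pr \leq \ee\int_0^\infty \ee^{-x/\rho}\pr(x)\,dx
= \ee\Bigl(\norm{\pr}_{L^1}-\int_0^\infty(1-\ee^{-x/\rho})\pr(x)\,dx\Bigr)
\leq \ee\Bigl(\tfrac{\rho}{1+\rho}+2\delta(\eps)\Bigr),
\]
using that $\abs{\norm{\pr}_{L^1}-1}\leq\delta(\eps)$ and $\int_0^\infty(1-\ee^{-x/\rho})\pr\geq \tfrac{1}{1+\rho}-\delta(\eps)$. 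Thus $N(\rho)\geq 1-\ee\rho-(1+2\ee)\delta(\eps)$, and one may choose $\rho_1=\rho_1(\eta)$ depending only on $\eta$ (say $\rho_1=\eta/(4\ee)$) and then $\eps_*$ small enough that $(1+2\ee)\delta(\eps)\leq\eta/4$. With this choice all your constants become uniform in $\eps$, and the proof closes. A minor side remark: your use of $K_\eps\geq 2$ (or $\geq 2-C\eps$) presumes $W\geq 0$ (or $W$ bounded below), which is indeed the setting of \cite{NiV14} but is not literally contained in the hypothesis~\eqref{eq:Ass:K2} of the present paper; this is a hypothesis issue rather than a flaw in your argument.
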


\begin{remark}
 Although in~\cite{NiV14} this result is only formulated to hold if $\rho\in(0,\rho_{0}]$ for some $\rho_{0}>0$ one easily sees that, together with \cref{Lem:exp:decay,eq:normalisation} the slightly more general version stated above is also correct.
\end{remark}

Based on these results we will now show the following lemma, which provides uniform boundedness of certain moments of self-similar profiles and which we will frequently use throughout this work.

\begin{lemma}\label{Lem:moments}
 For each $a\in(-1,\infty)$ there exists $\eps_{*}>0$ such that we have for each $b\in\R$ that 
 \begin{equation*}
  \int_{0}^{\infty}\weight{a}{b}(x)\pr(x)\dx\leq C_{a,b}\qquad \text{for all }\eps\in[0,\eps_{*}]
 \end{equation*}
 and all self-similar profiles $\pr$ where $C_{a,b}>0$ is a constant independent of $\eps$ and $\pr$.
\end{lemma}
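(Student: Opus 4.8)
The plan is to split the integral $\int_{0}^{\infty}\weight{a}{b}(x)\pr(x)\dx$ at $x=1$ and treat the two pieces separately, using the decay information we already have. For the tail part $\int_{1}^{\infty}x^{b}\pr(x)\dx$, \Cref{Lem:exp:decay} gives $\pr(x)\lesssim \ee^{-ax}$ for $x\geq 1$ with a constant $a>0$ that is uniform in $\eps$ and in the profile, so $\int_{1}^{\infty}x^{b}\pr(x)\dx\lesssim\int_{1}^{\infty}x^{b}\ee^{-ax}\dx<\infty$ with a bound depending only on $b$ (and the fixed constant $a$). This already disposes of the $b$-dependence and shows the only real work is near the origin.

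For the part near zero, $\int_{0}^{1}x^{a}\pr(x)\dx$ with $a\in(-1,\infty)$, the natural approach is a dyadic decomposition: write $(0,1)=\bigcup_{k\geq 0}(2^{-k-1},2^{-k}]$ and estimate
\begin{equation*}
 \int_{0}^{1}x^{a}\pr(x)\dx=\sum_{k\geq 0}\int_{2^{-k-1}}^{2^{-k}}x^{a}\pr(x)\dx\lesssim \sum_{k\geq 0}2^{-ka_{-}}\int_{2^{-k-1}}^{2^{-k}}\pr(x)\dx,
\end{equation*}
where $a_{-}=\min\{a,0\}$ accounts for whether $x^{a}$ is largest at the left or right endpoint of the dyadic block (for $a\geq 0$ the block sum is trivially bounded by $\int_0^1\pr\leq 1$ anyway, so assume $a\in(-1,0)$). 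Now invoke \Cref{Lem:regularity:zero} with $\rho=2^{-k-1}$: for any $\eta>0$ and $\eps$ small enough, $\int_{2^{-k-1}}^{2^{-k}}\pr(x)\dx\lesssim 2^{-(k+1)(1-\eta)}$. Plugging this in gives $\sum_{k\geq 0}2^{-ka}\cdot 2^{-k(1-\eta)}=\sum_{k\geq 0}2^{-k(1-\eta+a)}$, which converges provided $1-\eta+a>0$, i.e.\ provided we choose $\eta<1+a$. Since $a>-1$ is fixed, such an $\eta$ exists, and \Cref{Lem:regularity:zero} then fixes the corresponding threshold $\eps_{*}>0$; the resulting geometric sum is bounded by a constant depending only on $a$ (through the choice of $\eta$). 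The implicit constants in \Cref{Lem:regularity:zero} and \Cref{Lem:exp:decay} are independent of $\eps$ and of the particular profile, so the final bound $C_{a,b}$ inherits this uniformity.

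Combining the two pieces yields $\int_{0}^{\infty}\weight{a}{b}(x)\pr(x)\dx\leq C_{a,b}$ for all $\eps\in[0,\eps_{*}]$ and all normalised self-similar profiles, which is the claim. I do not expect any serious obstacle here: the only point requiring a little care is matching the exponent in \Cref{Lem:regularity:zero} against the singularity $x^{a}$ near zero, which is exactly why the hypothesis reads $a>-1$ rather than $a\geq -1$ — one needs strict inequality to have room to pick $\eta\in(0,1+a)$ so that the geometric series converges. The uniformity of the constants is already built into the cited lemmas, so it transfers automatically.
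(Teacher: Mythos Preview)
Your proof is correct and follows exactly the paper's approach: split at $x=1$, use \Cref{Lem:exp:decay} for the tail, and a dyadic decomposition combined with \Cref{Lem:regularity:zero} (choosing $\eta\in(0,1+a)$) near the origin. One small slip: the parenthetical claim ``$\int_0^1\pr\leq 1$'' for $a\geq 0$ is not justified by the normalisation $\int_0^\infty x\pr(x)\dx=1$, but this is harmless since your dyadic argument already covers all $a>-1$ uniformly; also note the clash of notation between the exponent $a$ from \Cref{Lem:exp:decay} and the parameter $a$ of the lemma.
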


\begin{proof}
 The uniform boundedness of $\int_{1}^{\infty}x^{b}\pr(x)\dx$ is clear due to Lemma~\ref{Lem:exp:decay}. Additionally, choosing $\eps_{*}>0$ sufficiently small, we can also deduce that $\int_{0}^{1}x^{a}\pr(x)\dx$ is uniformly bounded by means of Lemma~\ref{Lem:regularity:zero} together with a dyadic argument (see e.g.\@ \cite{NiV14,NTV15a,Thr18}).
\end{proof}

Finally, we will prove the following lemma which gives weak convergence of the sequence $\pr$ and its mass density in the sense of measures.

\begin{proposition}\label{Prop:weak:convergence:NEW}
 Let $\eps_{k}\to0$ as $k\to\infty$ be given. Under the conditions of Theorem~\ref{Thm:closeness:profiles:NEW} we have $\pr[\eps_{k}]\rightharpoonup \ee^{-\cdot}$ as $k\to\infty$ weakly in the sense of measures, i.e.\@
 \begin{equation*}
  \lim_{k\to\infty}\int_{0}^{\infty}(\pr[\eps_{k}](x)-\ee^{-x})\varphi(x)\dx =0 \quad \text{for all }\varphi\in C_{b}(\R_{\geq0})
 \end{equation*}
 for each sequence of self-similar profiles $\pr[\eps_{k}]$. Moreover, $x\pr[\eps_{k}](x)\rightharpoonup x\ee^{-x}$ as $k\to\infty$ weakly in the sense of measures, i.e.\@ $\lim_{k\to\infty}\int_{0}^{\infty}x(\pr[\eps_{k}](x)-\ee^{-x})\varphi(x)\dx =0$ for all $\varphi\in C_{b}(\R_{\geq0})$.
\end{proposition}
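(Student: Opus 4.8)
The plan is to deduce the weak convergence from the already-available uniform moment bounds (Lemma~\ref{Lem:moments}) together with the convergence of the desingularised Laplace transform (Lemma~\ref{Lem:convergence:Laplace}). First I would establish tightness: by Lemma~\ref{Lem:moments} applied with some $a\in(-1,0)$ and $b>1$, the family $\{\pr\}$ is bounded in $\X{a}{b}$ uniformly in $\eps$, so along any sequence $\eps_k\to0$ the measures $\pr[\eps_k](x)\dx$ (and likewise $x\pr[\eps_k](x)\dx$) have uniformly bounded mass and no mass escaping to $0$ or to $\infty$ — the weight $\weight{a}{b}$ with $a<0<1<b$ blows up both near zero and near infinity, so $\int_{(0,\rho)\cup(R,\infty)}\pr[\eps_k](x)\dx\to0$ uniformly as $\rho\to0$, $R\to\infty$. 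Hence the family is tight on $(0,\infty)$ and, by Prokhorov, every subsequence has a further subsequence along which $\pr[\eps_k](x)\dx$ converges weakly-$*$ to some finite measure $\mu$ on $[0,\infty)$, with no atom at $0$ thanks to the weight control near zero.

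Next I would identify the limit $\mu$ using the Laplace transform. For fixed $p\ge0$ the function $x\mapsto 1-\ee^{-px}$ lies in $C_b(\R_{\ge0})$, so weak-$*$ convergence gives $\int_0^\infty (1-\ee^{-px})\,\dd\mu(x)=\lim_k\int_0^\infty(1-\ee^{-px})\pr[\eps_k](x)\dx$, while Lemma~\ref{Lem:convergence:Laplace} forces this last limit to equal $\int_0^\infty(1-\ee^{-px})\ee^{-x}\dx$. Thus the desingularised Laplace transforms of $\mu$ and of $\ee^{-x}\dx$ coincide for all $p\ge0$; since a finite measure on $(0,\infty)$ is determined by $p\mapsto\int(1-\ee^{-px})\,\dd\mu$ (differentiate in $p$, or subtract two values of $p$, to recover the genuine Laplace transform $\int\ee^{-px}\dd\mu$ up to the total mass, and then invoke injectivity of the Laplace transform together with the normalisation), we conclude $\mu=\ee^{-x}\dx$. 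Because the limit is the same for every convergent subsequence, the whole sequence converges: $\int_0^\infty\pr[\eps_k](x)\varphi(x)\dx\to\int_0^\infty\ee^{-x}\varphi(x)\dx$ for all $\varphi\in C_b(\R_{\ge0})$, which is the first assertion.

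For the mass densities I would repeat the argument with the measures $x\pr[\eps_k](x)\dx$: these are probability measures by the normalisation~\eqref{eq:normalisation}, tight by the same $\X{a}{b}$ bound (now the relevant weight is $\weight{a-1}{b-1}$, still coercive at both ends for suitable $a,b$), and their weak-$*$ limits $\nu$ satisfy, testing against $x\mapsto(1-\ee^{-px})/x\in C_b(\R_{\ge0})$,
\begin{equation*}
 \int_0^\infty\frac{1-\ee^{-px}}{x}\,\dd\nu(x)=\lim_k\int_0^\infty(1-\ee^{-px})\pr[\eps_k](x)\dx=\int_0^\infty(1-\ee^{-px})\ee^{-x}\dx,
\end{equation*}
which identifies $\nu=x\ee^{-x}\dx$ exactly as before; hence $\int_0^\infty x\pr[\eps_k](x)\varphi(x)\dx\to\int_0^\infty x\ee^{-x}\varphi(x)\dx$ for all $\varphi\in C_b(\R_{\ge0})$. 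Alternatively this second statement follows directly from the first plus tightness of the mass densities by a standard approximation argument.

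The main obstacle I anticipate is the identification step rather than the compactness: I must be careful that weak-$*$ limits of measures on $(0,\infty)$ do not put mass at the boundary point $0$ (where $1-\ee^{-px}$ vanishes and hence carries no information), and that testing only against the desingularised exponentials $1-\ee^{-px}$ is enough to pin down a finite measure. Both issues are resolved by the weighted bounds: choosing $a<0$ in Lemma~\ref{Lem:moments} controls $\int_0^\rho\pr\,\dx\lesssim\rho^{-a}\to0$, ruling out an atom at $0$, after which $p\mapsto\int_0^\infty(1-\ee^{-px})\,\dd\mu$ determines $\mu$ among finite measures on $(0,\infty)$ by injectivity of the Laplace transform (taking $p\to\infty$ or differences recovers the total mass and then the full transform). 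The rest is routine measure theory.
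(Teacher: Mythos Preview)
Your argument is correct, but the paper organises the proof differently. Rather than going through Prokhorov and subsequential identification, the paper first upgrades Lemma~\ref{Lem:convergence:Laplace} to convergence of the \emph{full} Laplace transform $\int_0^\infty \ee^{-px}\pr[\eps_k](x)\dx$ for every $p\ge0$ (by sending $p\to\infty$ in the desingularised transform via dominated convergence to get the total-mass convergence, then subtracting), and then invokes a ready-made result (\cite[Lemma~A.9]{SSV10}) stating that pointwise convergence of Laplace transforms implies weak convergence of the measures. For the mass densities the paper does not repeat the Laplace identification: instead it uses a partition of unity subordinate to $(0,2R)\cup(R,\infty)$, applies the first part on the bounded piece (where $x\varphi(x)\zeta_1(x)\in C_b$), and controls the tail directly by the uniform exponential decay of Lemma~\ref{Lem:exp:decay}. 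Your compactness-plus-identification route is more self-contained (no external reference needed) and makes the role of tightness explicit, whereas the paper's route is shorter and, for the second claim, avoids a second identification step altogether.
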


\begin{proof}
  To prove the first claim, we first deduce from Lemma~\ref{Lem:convergence:Laplace} that 
 \begin{equation}\label{eq:weakd:convergence:1}
  \lim_{k\to\infty}\abs*{\int_{0}^{\infty}\ee^{-px}(\pr[\eps_{k}](x)-\ee^{-x})\dx}=0\quad \text{for all }p\in [0,\infty).
 \end{equation}
 In fact, for $\delta>0$ given we may fix $\eps_{*}>0$ such that
  \begin{equation*}
 \abs*{\int_{0}^{\infty}(1-\ee^{-px})(\pr[\eps_{k}](x)-\ee^{-x})\dx}<\delta \quad \text{for all } p\in[0,\infty)\text{ and all }\eps_{k}\in[0,\eps_{*}].
 \end{equation*}
 Passing to the limit $p\to\infty$ on the left-hand side which is possible due to Lebesgue's theorem, we have 
 \begin{equation*}
  \lim_{k\to\infty}\abs*{\int_{0}^{\infty}(\pr[\eps_{k}](x)-\ee^{-x})\dx}=0.
 \end{equation*}
 From this together with Lemma~\ref{Lem:convergence:Laplace} the claimed limit in~\eqref{eq:weakd:convergence:1} directly follows.
 
 Due to~\eqref{eq:weakd:convergence:1} we obtain from~\cite[Lemma~A.9]{SSV10} that 
 \begin{equation*}
  \lim_{k\to\infty}\int_{0}^{\infty}\bigl(\pr[\eps_{k}](x)-\ee^{-x}\bigr)\varphi(x)\dx =0\quad \text{for all }\varphi\in C_{b}(\R_{\geq 0}).
 \end{equation*}
 To prove the second claim of the lemma, we choose a partition of unity $1=\zeta_1+\zeta_2$ subordinate to the covering $(0,\infty)=(0,2R)\cup (R,\infty)$ with $R>1$ and $\zeta_1,\zeta_2\in C_{\text{b}}(\R_{\geq 0})$ satisfying $0\leq \zeta_1,\zeta_2\leq 1$. This allows to estimate
 \begin{multline*}
  \abs*{\int_{0}^{\infty}x(\pr[\eps_{k}](x)-\ee^{-x})\varphi(x)\dx}\\*
  \leq \abs*{\int_{0}^{\infty}(\pr[\eps_{k}](x)-\ee^{-x})x\varphi(x)\zeta_{1}(x)\dx}+\abs*{\int_{0}^{\infty}x(\pr[\eps_{k}](x)-\ee^{-x})\varphi(x)\zeta_{2}(x)\dx}.
 \end{multline*}
 Since $x\mapsto x\varphi(x)\zeta_{1}(x)$ is in $C_{b}(\R_{\geq0})$, the first integral on the right-hand side converges to zero as $\eps_{k}\to 0$. On the other hand, Lemma~\ref{Lem:exp:decay} together with the choice of $\zeta_2$ yields
 \begin{equation*}
  \abs*{\int_{0}^{\infty}x(\pr[\eps_{k}](x)-\ee^{-x})\varphi(x)\zeta_{2}(x)\dx}\lesssim \norm{\varphi}_{L^{\infty}}\int_{R}^{\infty}x\bigl(\ee^{-ax}+\ee^{-x}\bigr)\dx\lesssim \norm{\varphi}_{L^{\infty}} R\ee^{-\min\{a,1\}R}.
 \end{equation*}
 The right-hand side converges to zero as $R\to\infty$. Thus, choosing first $R$ large and then $\eps_{k}$ small, finishes the proof.
\end{proof}

\section{Pointwise convergence of self-similar profiles}\label{Sec:pointwise:convergence}

In this section, we will show that each sequence of self-similar profiles $\pr$ converges at least pointwise to the unique profile $\ee^{-x}$ as $\eps\to 0$. This result will be a rather straightforward consequence of the two preparing \cref{Lem:profiles:unfiorm:bound,Lem:primitive:uniform:convergence:NEW}. The first one gives a uniform upper bound for self-similar profiles.

\begin{lemma}\label{Lem:profiles:unfiorm:bound}
 For each $\nu\in(0,1-\alpha)$ there exists $\eps_{\nu}>0$ and $a>0$ such that
 \begin{equation*}
  \pr(x)\lesssim \weight{-\alpha-\nu}{0}(x)\ee^{-ax} \qquad \text{for all } x\in(0,\infty)\quad \text{and all }\eps\in[0,\eps_{\nu}]
 \end{equation*}
 where $\pr$ is any self-similar profile.
\end{lemma}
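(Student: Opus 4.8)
The plan is to bootstrap from the weak integral estimate in \Cref{Lem:regularity:zero} and the known exponential tail in \Cref{Lem:exp:decay} to a genuine pointwise bound by using the self-similar equation~\eqref{eq:selfsim} itself as a smoothing device. Writing $x^2\pr(x)=\int_0^x\int_{x-y}^\infty yK_\eps(y,z)\pr(y)\pr(z)\dz\dy$, one estimates the kernel via~\eqref{eq:pert:est:weight}, namely $K_\eps(y,z)\lesssim\weight{-\alpha}{\alpha}(y)\weight{-\alpha}{\alpha}(z)$, and then controls the $z$-integral by a moment bound from \Cref{Lem:moments} (the moment $\int_0^\infty\weight{-\alpha}{\alpha}(z)\pr(z)\dz$ is uniformly finite for $\alpha<1$, so $a=-\alpha>-1$ is admissible). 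This reduces the right-hand side to something like $x^2\pr(x)\lesssim\int_0^x y^{1-\alpha}\weight{-\alpha}{\alpha}(\cdot)\dots\pr(y)\dy$, i.e.\ essentially $\int_0^x y^{1-\alpha}\pr(y)\dy$ up to weight bookkeeping near $y=1$.

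The key step is then to estimate $\int_0^x y^{1-\alpha}\pr(y)\dy$ for small $x$ using \Cref{Lem:regularity:zero}. Splitting dyadically, $\int_0^x y^{1-\alpha}\pr(y)\dy\lesssim\sum_{j:2^{-j}\leq x} (2^{-j})^{1-\alpha}\int_{2^{-j-1}}^{2^{-j}}\pr(y)\dy\lesssim\sum_{j:2^{-j}\leq x}(2^{-j})^{1-\alpha}(2^{-j})^{1-\eta}$, and choosing $\eta>0$ small enough that $2-\alpha-\eta>0$, the geometric series sums to $\lesssim x^{2-\alpha-\eta}$. Dividing by $x^2$ gives $\pr(x)\lesssim x^{-\alpha-\eta}$ for $x$ small, which is the claimed $\weight{-\alpha-\nu}{0}$ behaviour near zero after renaming $\nu=\eta$ (and one must check $\nu<1-\alpha$ is exactly what lets the series converge). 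For $x\gtrsim 1$ one already has $\pr(x)\lesssim\ee^{-ax}$ from \Cref{Lem:exp:decay}; to combine the two regimes into a single bound $\weight{-\alpha-\nu}{0}(x)\ee^{-ax}$ one notes that on any compact set $[\rho_0,1]$ the profile is bounded (again by the dyadic estimate, since $\pr(x)\lesssim x^{-\alpha-\nu}\lesssim 1$ there, or directly from~\eqref{eq:selfsim}), and for $x\geq 1$ the exponential dominates while $\weight{-\alpha-\nu}{0}(x)=1$.

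The main obstacle I anticipate is getting the weight bookkeeping right uniformly across $y\in(0,1)$ and $y\in(1,\infty)$ in the $y$-integral of~\eqref{eq:selfsim}: near $y=1$ the factor $\weight{-\alpha}{\alpha}(y)$ switches branches, and one must make sure the resulting power of $y$ under the integral is strictly greater than $-1$ so the integral converges and produces a positive power of $x$ after division. For large $x$ the $y$-integral also has a contribution from $y$ near $x$, where one should use the exponential decay of $\pr(y)$ rather than the near-zero estimate; splitting the $y$-integral at $y=1$ (or at $y=x/2$) and treating each piece with the appropriate lemma handles this. A secondary point is that the inner $z$-integral runs over $z\geq x-y$, so for $y$ close to $x$ this is all of $(0,\infty)$ and the moment $\int_0^\infty\weight{-\alpha}{\alpha}(z)\pr(z)\dz$ is what is needed — this is exactly covered by \Cref{Lem:moments} with $a=-\alpha\in(-1,0)$ and $b=\alpha$, so no circularity arises. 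Everything else is routine, and the uniformity in $\eps\in[0,\eps_\nu]$ follows because every lemma invoked is uniform in $\eps$ on a small interval.
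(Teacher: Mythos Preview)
Your approach is essentially the same as the paper's: both use~\eqref{eq:selfsim}, the kernel bound~\eqref{eq:pert:est:weight}, and \Cref{Lem:moments} for the $z$-integral, then control $\int_0^x y^{-\alpha}\pr(y)\,dy$ for small $x$. The paper streamlines your dyadic step by writing $y^{-\alpha}=y^{1-\alpha-\nu}\,y^{\nu-1}\leq x^{1-\alpha-\nu}\,y^{\nu-1}$ and invoking \Cref{Lem:moments} once more (with exponent $\nu-1>-1$) rather than going back to \Cref{Lem:regularity:zero}; note that this algebraic extraction is where the constraint $\nu<1-\alpha$ actually enters, not the convergence of your geometric series (which only needs $\nu<2-\alpha$).
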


\begin{proof}
 For $x\geq 1$, the claim immediately follows from Lemma~\ref{Lem:exp:decay}. If $x\leq 1$, we recall from~\eqref{eq:pert:est:weight} that $K_{\eps}(y,z)\lesssim \weight{-\alpha}{\alpha}(y)\weight{-\alpha}{\alpha}(z)$. Thus, \eqref{eq:selfsim} together with Lemma~\ref{Lem:moments} yields
 \begin{multline*}
  \pr(x)=\frac{1}{x^2}\int_{0}^{x}\int_{x-y}^{\infty}yK_{\eps}(y,z)\pr(y)\pr(z)\dz\dy\\*
  \lesssim \frac{1}{x}\int_{0}^{x}\weight{-\alpha}{\alpha}(y)\pr(y)\int_{0}^{\infty}\weight{-\alpha}{\alpha}(z)\pr(z)\dz\dy\lesssim \frac{1}{x}\int_{0}^{x}\weight{-\alpha}{\alpha}(y)\pr(y)\dy.
 \end{multline*}
 To finish the proof, we use $x\leq 1$ and Lemma~\ref{Lem:moments} to estimate further
 \begin{equation*}
  \pr(x)\lesssim \frac{1}{x}\int_{0}^{x}y^{1-\alpha-\nu}y^{\nu-1}\pr(y)\dy\leq x^{-\alpha-\nu}\int_{0}^{x}y^{\nu-1}\pr(y)\dy\lesssim x^{-\alpha-\nu}.
 \end{equation*}
\end{proof}

\begin{remark}\label{Rem:Portmanteau}
For later use, we recall the following equivalence of the Portmanteau theorem (see e.g.\@ \cite[p.\@385]{Els11}). Let $\mathcal{X}$ be a metric space and $\mu_{n},\mu\in \M^{+}$ (the space of non-negative finite measures on $\mathcal{X}$). Then the following two statements are equivalent
\begin{enumerate}
 \item $\mu_{k}\rightharpoonup \mu$ as $k\to\infty$
 \item $\lim_{k\to\infty}\mu_{k}(B)=\mu(B)$ for each continuity set $B\subset \mathcal{X}$ of $\mu$.
\end{enumerate}
\end{remark}

The next lemma provides uniform convergence of certain primitives of self-similar profiles.

\begin{lemma}\label{Lem:primitive:uniform:convergence:NEW}
 Let $x>0$ be fixed. For each self-similar profile $\pr$ we define the (continuous) function $\Prp{x}\colon [0,x]\to [0,\infty)$ by
 \begin{equation*}
  \Prp{x}(y)\vcc=\int_{0}^{x-y}\pr(z)\dz.
 \end{equation*}
 Then, for each sequence $\eps_{k}\to 0$ as $k\to\infty$, we have that $\Prp[\eps_{k}]{x}$ converges uniformly on $[0,x]$ to the function $y\mapsto \Prp[0]{x}(y)\vcc=\int_{0}^{x-y}\ee^{-z}\dz=(1-\ee^{-(x-y)})$. In particular, if we extend $\Prp{x}$ to $[0,\infty)$ by setting $\Prp{x}(y)=0$ if $y>x$ we have that $\Prp[\eps_{k}]{x}\to \Prp[0]{x}$ uniformly on $[0,\infty)$. Moreover, on $[0,x]$ the function $y\mapsto \int_{x-y}^{\infty}\pr[\eps_{k}](z)\dz$ converges uniformly to $y\mapsto \int_{x-y}^{\infty}\ee^{-z}\dz=\ee^{-(x-y)}$.
\end{lemma}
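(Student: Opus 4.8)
The plan is to reduce the whole statement to a fact about cumulative distribution functions: set $\mu_{k}\vcc=\pr[\eps_{k}](z)\,\dz$, which by \cref{Lem:exp:decay} (giving $\int_{1}^{\infty}\pr\lesssim\int_{1}^{\infty}\ee^{-az}\,\dz<\infty$) and \cref{Lem:moments} (applied with, say, $a=b=0$, giving a uniform bound on $\int_{0}^{1}\pr$) is a finite measure on $\R_{\geq0}$ with uniformly bounded mass. Note that each $\Prp{x}$ is non-increasing on $[0,x]$ because $\pr\geq0$ a.e., and continuous because $\pr\in L^{1}(0,\infty)$; the limit $\Prp[0]{x}(y)=1-\ee^{-(x-y)}$ has the same two properties. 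I would first prove the pointwise convergence $\Prp[\eps_{k}]{x}(y)\to\Prp[0]{x}(y)$ for each fixed $y\in[0,x]$, and then upgrade it to uniform convergence on $[0,x]$ by a standard P\'olya-type argument that exploits this monotonicity together with the continuity of the limit.

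For the pointwise statement I would invoke \cref{Prop:weak:convergence:NEW}: testing against all $\varphi\in C_{b}(\R_{\geq0})$ says exactly that $\mu_{k}\rightharpoonup\mu\vcc=\ee^{-\cdot}\,\dz$ weakly in the sense of measures, and testing in particular against $\varphi\equiv1$ gives $\mu_{k}(\R_{\geq0})\to1=\mu(\R_{\geq0})$, which I also need for the tail. Since $\mu$ is absolutely continuous, $[0,t]$ is a continuity set of $\mu$ for every $t\geq0$, so the Portmanteau theorem in the form recalled in \cref{Rem:Portmanteau} yields
\[
 \Prp[\eps_{k}]{x}(y)=\mu_{k}\bigl([0,x-y]\bigr)\longrightarrow\mu\bigl([0,x-y]\bigr)=1-\ee^{-(x-y)}=\Prp[0]{x}(y)\qquad(k\to\infty)
\]
for each $y\in[0,x]$.

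To pass from pointwise to uniform, given $\delta>0$ I would use uniform continuity of $\Prp[0]{x}$ to pick a finite partition $0=y_{0}<y_{1}<\dots<y_{N}=x$ on each of whose subintervals the oscillation of $\Prp[0]{x}$ is below $\delta$, take $k$ large enough that $\abs{\Prp[\eps_{k}]{x}(y_{j})-\Prp[0]{x}(y_{j})}<\delta$ at the finitely many nodes, and then for $y\in[y_{j-1},y_{j}]$ squeeze $\Prp[\eps_{k}]{x}(y)$ between $\Prp[\eps_{k}]{x}(y_{j-1})$ and $\Prp[\eps_{k}]{x}(y_{j})$ using monotonicity, concluding $\sup_{y\in[0,x]}\abs{\Prp[\eps_{k}]{x}(y)-\Prp[0]{x}(y)}\lesssim\delta$. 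The extension by zero to $[0,\infty)$ is then immediate, since both functions vanish (and are continuous) at $y=x$, so the supremum over $[0,\infty)$ of the difference equals that over $[0,x]$. Finally, the tail claim follows by writing $\int_{x-y}^{\infty}\pr[\eps_{k}](z)\,\dz=\mu_{k}(\R_{\geq0})-\Prp[\eps_{k}]{x}(y)$ and combining $\mu_{k}(\R_{\geq0})\to1$ with the uniform convergence just established, which gives the uniform limit $\ee^{-(x-y)}$ on $[0,x]$. The only genuinely delicate point is the monotone interpolation that upgrades pointwise to uniform convergence; the measure-theoretic input and the tail identity are routine.
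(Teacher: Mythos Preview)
Your argument is correct, and the pointwise step as well as the handling of the extension and the tail are essentially identical to the paper's. The genuine difference lies in how you upgrade pointwise to uniform convergence. The paper establishes pre-compactness of $\{\Prp[\eps_{k}]{x}\}$ in $C([0,x])$ via Arzel\`a--Ascoli: uniform boundedness comes from \cref{Lem:moments}, and equicontinuity is obtained from the uniform pointwise bound $\pr(x)\lesssim\weight{-\alpha-\nu}{0}(x)\ee^{-ax}$ of \cref{Lem:profiles:unfiorm:bound}, which controls $\abs{\Prp[\eps_{k}]{x}(y_{1})-\Prp[\eps_{k}]{x}(y_{2})}$ by an integral of a fixed $L^{1}$ function over $[x-y_{1},x-y_{2}]$. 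Then every uniformly convergent subsequence is identified with $\Prp[0]{x}$ by the pointwise limit, forcing convergence of the full sequence. Your P\'olya-type argument is more economical: it uses only the monotonicity of each $\Prp[\eps_{k}]{x}$ (immediate from $\pr\geq0$) together with continuity of the limit, and thereby avoids \cref{Lem:profiles:unfiorm:bound} altogether. What the paper's route buys is robustness---the Arzel\`a--Ascoli machinery would survive if the $\Prp[\eps_{k}]{x}$ were not monotone---whereas your route is shorter and requires less input here.
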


\begin{proof}
 The last two assertions follow directly from the first one noting that $\Prp{x}(x)=0$ and $\int_{x-y}^{\infty}\pr(z)\dz=\int_{0}^{\infty}\pr(z)\dz-\Prp{x}(y)$ for all $\eps$ while $\int_{0}^{\infty}\pr[\eps_{k}](z)\dz\to \int_{0}^{\infty}\ee^{-z}\dz=1$ due to Proposition~\ref{Prop:weak:convergence:NEW}. Thus, it suffices to prove the first claim. For this, we note that \cref{Prop:weak:convergence:NEW,Rem:Portmanteau} immediately yield that 
 \begin{equation}\label{eq:pf:unif:conv:0}
  \Prp[\eps_{k}]{x}\longrightarrow \Prp[0]{x} \qquad \text{pointwise on }[0,x]\quad \text{as }k\to\infty.
 \end{equation}
 To prove the uniform convergence, we will show that for sufficiently small $\eps_{*}>0$ the set $\{\Prp[\eps_{k}]{x}\}_{\eps_{k}\leq \eps_{*}}$ is pre-compact in $C([0,x])$. To see this, we fix $\nu\in (0,1-\alpha)$ and $\eps_{*}>0$ according to Lemma~\ref{Lem:profiles:unfiorm:bound} such that Lemma~\ref{Lem:moments} immediately implies
 \begin{equation}\label{eq:pf:unif:conv:1}
  \sup_{\eps_{k}\in[0,\eps_{*}]}\sup_{y\in[0,x]}\abs{\Prp[\eps_{k}]{x}(y)}\leq \sup_{\eps_{k}\in[0,\eps_{*}]}\int_{0}^{\infty}\pr[\eps_{k}](z)\dz\lesssim 1.
 \end{equation}
 Moreover, for $y_1,y_2\in[0,x]$ with $y_1<y_2$ we have by means of Lemma~\ref{Lem:profiles:unfiorm:bound} and \cite[eq.\@ (A3-10)]{Alt16} that
 \begin{equation}\label{eq:pf:unif:conv:2}
  \begin{split}
   \sup_{\eps_{k}\in[0,\eps_{*}]}\abs{\Prp[\eps_{k}]{x}(y_1)-\Prp[\eps_{k}]{x}(y_2)}&=\int_{x-y_1}^{x-y_2}\pr[\eps_{k}](z)\dz\\
   &\lesssim \int_{x-y_1}^{x-y_2}\weight{-\alpha-\nu}{0}(z)\ee^{-az}\dz\longrightarrow 0 \qquad \text{as }\abs{y_1-y_2}\to 0.
  \end{split}
 \end{equation}
 Taking \cref{eq:pf:unif:conv:1,eq:pf:unif:conv:2} together, the Arzel\`{a}-Ascoli theorem (see e.g.\@ \cite[p.\@106]{Alt16}) shows the pre-compactness of $\{\Prp[\eps_{k}]{x}\}_{0\leq\eps_{k}\leq\eps_{*}}$. Thus, at least for a subsequence $\eps_{k_{\ell}}\to 0$ we have that $\Prp[\eps_{k_{\ell}}]{x}$ converges uniformly on $[0,x]$. However, due to~\eqref{eq:pf:unif:conv:0} the corresponding limit can be identified with $\Prp[0]{x}$. Since this argument holds true for each convergent subsequence, we conclude that in fact already $\Prp[\eps_{k}]{x}\to \Prp[0]{x}$ uniformly on $[0,x]$ as $k\to\infty$ which ends the proof.
\end{proof}

With these preparations, we can now show the pointwise convergence of self-similar profiles.

\begin{lemma}\label{Lem:pointwise:conv:mass:density:NEW}
 Let $\eps_{k}\to 0$ as $k\to\infty$ be given and let $\pr[\eps_{k}]$ be a corresponding sequence of self-similar profiles to~\eqref{eq:Smol}. Then we have $\pr[\eps_{k}](x)\to \ee^{-x}$ as $k\to\infty$ pointwise for all $x\in(0,\infty)$.
\end{lemma}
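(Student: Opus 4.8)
The plan is to pass to the limit $\eps_{k}\to0$ directly in the self-similar identity~\eqref{eq:selfsim} evaluated at a fixed $x>0$ (working with the continuous representative of $\pr[\eps_{k}]$ furnished by the right-hand side of~\eqref{eq:selfsim}, which is continuous in $x$). Splitting $K_{\eps_{k}}=2+\eps_{k}W$, the identity reads $x^{2}\pr[\eps_{k}](x)=T_{k}+S_{k}$ with
\begin{equation*}
 T_{k}\vcc=2\int_{0}^{x}y\,\pr[\eps_{k}](y)\biggl(\int_{x-y}^{\infty}\pr[\eps_{k}](z)\dz\biggr)\dy,\qquad S_{k}\vcc=\eps_{k}\int_{0}^{x}\int_{x-y}^{\infty}yW(y,z)\pr[\eps_{k}](y)\pr[\eps_{k}](z)\dz\dy,
\end{equation*}
and the goal is to show $T_{k}\to x^{2}\ee^{-x}$ and $S_{k}\to0$. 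Since $\ee^{-\cdot}$ solves~\eqref{eq:selfsim} for $\eps=0$ --- equivalently $x^{2}\ee^{-x}=2\int_{0}^{x}y\ee^{-y}\ee^{-(x-y)}\dy$ --- this yields the claim $\pr[\eps_{k}](x)\to\ee^{-x}$.

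The error term $S_{k}$ I would dispose of with the pointwise bound from~\eqref{eq:pert:est:weight}, $W(y,z)\lesssim\weight{-\alpha}{\alpha}(y)\weight{-\alpha}{\alpha}(z)$, and the shift rule~\eqref{eq:weight:shift}, $y\weight{-\alpha}{\alpha}(y)=\weight{1-\alpha}{1+\alpha}(y)$, which give $S_{k}\lesssim\eps_{k}\bigl(\int_{0}^{\infty}\weight{1-\alpha}{1+\alpha}(y)\pr[\eps_{k}](y)\dy\bigr)\bigl(\int_{0}^{\infty}\weight{-\alpha}{\alpha}(z)\pr[\eps_{k}](z)\dz\bigr)$. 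As $\alpha\in(0,1)$ we have $1-\alpha>-1$ and $-\alpha>-1$, so both factors are bounded uniformly in $k$ by Lemma~\ref{Lem:moments}, whence $S_{k}\lesssim\eps_{k}\to0$.

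For the main term, set $G_{k}(y)\vcc=\int_{x-y}^{\infty}\pr[\eps_{k}](z)\dz$, so that Lemma~\ref{Lem:primitive:uniform:convergence:NEW} gives $G_{k}\to\ee^{-(x-\cdot)}$ uniformly on $[0,x]$. I would then write
\begin{equation*}
 T_{k}-x^{2}\ee^{-x}=2\int_{0}^{x}y\,\pr[\eps_{k}](y)\bigl(G_{k}(y)-\ee^{-(x-y)}\bigr)\dy+2\ee^{-x}\int_{0}^{x}y\ee^{y}\bigl(\pr[\eps_{k}](y)-\ee^{-y}\bigr)\dy.
\end{equation*}
The first integral is bounded by $2\norm{G_{k}-\ee^{-(x-\cdot)}}_{L^{\infty}(0,x)}\int_{0}^{x}y\pr[\eps_{k}](y)\dy\le 2\norm{G_{k}-\ee^{-(x-\cdot)}}_{L^{\infty}(0,x)}$ by the normalisation~\eqref{eq:normalisation}, hence tends to $0$. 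For the second integral I extend $\psi(y)\vcc=y\ee^{y}$ from $[0,x]$ to some $\tilde\psi\in C_{b}(\R_{\ge0})$ (e.g.\ constant for $y\ge x$); then $\int_{0}^{\infty}\tilde\psi(y)(\pr[\eps_{k}](y)-\ee^{-y})\dy\to0$ by Proposition~\ref{Prop:weak:convergence:NEW}, and subtracting the tail contribution $\tilde\psi(x)\int_{x}^{\infty}(\pr[\eps_{k}](y)-\ee^{-y})\dy$ leaves exactly $\int_{0}^{x}\psi(y)(\pr[\eps_{k}](y)-\ee^{-y})\dy$; the tail vanishes because $\int_{x}^{\infty}\pr[\eps_{k}]=\int_{0}^{\infty}\pr[\eps_{k}]-\int_{0}^{x}\pr[\eps_{k}]\to1-(1-\ee^{-x})=\ee^{-x}=\int_{x}^{\infty}\ee^{-y}\dy$, using Proposition~\ref{Prop:weak:convergence:NEW} for the total mass and Lemma~\ref{Lem:primitive:uniform:convergence:NEW} for $\int_{0}^{x}\pr[\eps_{k}]$. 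Hence the second integral tends to $0$ as well, and therefore $x^{2}\pr[\eps_{k}](x)=T_{k}+S_{k}\to x^{2}\ee^{-x}$.

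The step I expect to be the main obstacle is this second integral in the decomposition of $T_{k}$: the natural test function ($y\mapsto y\ee^{y}$ on $[0,x]$, extended by zero) is discontinuous at $y=x$, so the weak-in-measure convergence of $\pr[\eps_{k}]$ cannot be applied directly, and one must use the extension-plus-tail device above (alternatively, integrate by parts and invoke only the uniform convergence of the primitives $y\mapsto\int_{0}^{y}\pr[\eps_{k}]$, which again follows from Lemma~\ref{Lem:primitive:uniform:convergence:NEW}). Everything else is routine once the uniform bounds of \cref{Lem:moments,Lem:profiles:unfiorm:bound} --- the latter entering through Lemma~\ref{Lem:primitive:uniform:convergence:NEW} --- are in hand.
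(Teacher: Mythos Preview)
Your proof is correct and follows essentially the same route as the paper: pass to the limit in~\eqref{eq:selfsim}, kill the $\eps_{k}W$ term by moment bounds, and handle the remaining product difference via the uniform convergence of the inner primitive (Lemma~\ref{Lem:primitive:uniform:convergence:NEW}) together with weak convergence (Proposition~\ref{Prop:weak:convergence:NEW}). The only cosmetic differences are that the paper splits the product $y\pr[\eps_{k}](y)\int_{x-y}^{\infty}\pr[\eps_{k}]$ in the opposite order (putting the difference on $\pr[\eps_{k}](y)-\ee^{-y}$ first) and then invokes the Portmanteau theorem and the weak convergence of $y\pr[\eps_{k}]$ to treat the resulting integral over $(0,x)$, whereas you avoid Portmanteau by the extension-plus-tail device; both tricks resolve the same discontinuity-at-$x$ obstruction you correctly identified.
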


\begin{proof}
 The claimed convergence is obviously equivalent to $x^2\pr[\eps_{k}](x)\to x^2\ee^{-x}$ as $k\to\infty$ for $x\in(0,\infty)$. To see the latter, we assume $x>0$ and recall that $\pr[\eps_{k}]$ solves~\eqref{eq:selfsim} while $\ee^{-x}$ solves the same equation with $\eps=0$. Thus, we can rewrite
 \begin{multline*}
  x^2(\pr[\eps_{k}](x)-\ee^{-x})=\int_{0}^{x}\int_{x-y}^{\infty}y\bigl(K_{\eps_{k}}(y,z)-2\bigr)\pr[\eps_{k}](y)\pr[\eps_{k}](z)\dz\dy\\*
  +2\int_{0}^{x}y(\pr[\eps_{k}](y)-\ee^{-y})\int_{x-y}^{\infty}\pr[\eps_{k}](z)\dz\dy+2\int_{0}^{x}y\ee^{-y}\int_{x-y}^{\infty}(\pr[\eps_{k}](z)-\ee^{-z})\dz\dy.
 \end{multline*}
 To continue, we note that $y(K_{\eps_{k}}(y,z)-2)=\eps_{k} y W(y,z)\lesssim \eps_{k}\weight{1-\alpha}{1+\alpha}(y)\weight{-\alpha}{\alpha}(z)$ due to \cref{eq:pert:est:weight,eq:weight:shift,eq:Ass:K2} and recall the notation $\Prp[\eps_{k}]{x}$ as introduced in Lemma~\ref{Lem:primitive:uniform:convergence:NEW} which allows to estimate
 \begin{multline*}
  \abs*{x^2(\pr[\eps_{k}](x)-\ee^{-x})}\lesssim \eps_{k} \int_{0}^{\infty}\int_{0}^{\infty}\weight{1-\alpha}{1+\alpha}(y)\weight{-\alpha}{\alpha}(z)\pr[\eps_{k}](y)\pr[\eps_{k}](z)\dz\dy\\*
  +2\int_{0}^{\infty}\pr[\eps_{k}](z)\dz\abs*{\int_{0}^{x}y(\pr[\eps_{k}](y)-\ee^{-y})\dy}+2\abs*{\int_{0}^{\infty}y(\pr[\eps_{k}](y)-\ee^{-y})\Prp[\eps_{k}]{x}(y)\dy}\\*
  +2\abs*{\int_{0}^{x}y\ee^{-y}\biggl(\int_{x-y}^{\infty}\pr[\eps_{k}](z)-\ee^{-z}\dz\biggr)\dy}.
 \end{multline*}
 By means of Lemma~\ref{Lem:moments} we can estimate further to get
  \begin{multline*}
  \abs*{x^2(\pr[\eps_{k}](x)-\ee^{-x})}\lesssim \eps_{k} +\abs*{\int_{0}^{x}y(\pr[\eps_{k}](y)-\ee^{-y})\dy}+2\abs*{\int_{0}^{\infty}y(\pr[\eps_{k}](y)-\ee^{-y})\Prp[\eps_{k}]{x}(y)\dy}\\*
  +2\abs*{\int_{0}^{x}y\ee^{-y}\biggl(\int_{x-y}^{\infty}\pr[\eps_{k}](z)-\ee^{-z}\dz\biggr)\dy}.
 \end{multline*}
 Due to \cref{Rem:Portmanteau,Prop:weak:convergence:NEW,Lem:primitive:uniform:convergence:NEW} the right-hand side converges to zero as $\eps_{k}\to 0$. More precisely, the first integral converges to zero due to the weak convergence of $y\pr[\eps_{k}](y)$ to $y\ee^{-y}$ and the fact that $(0,x)$ is a continuity set of $y\ee^{-y}$ (\cref{Rem:Portmanteau,Prop:weak:convergence:NEW}). The second integral converges since $y\pr[\eps_{k}](y)\rightharpoonup y\ee^{-y}$ and $\Prp[\eps_{k}]{x}\to \Prp[0]{x}$ uniformly on $(0,\infty)$ as $\eps_{k}\to 0$ (\cref{Prop:weak:convergence:NEW,Lem:primitive:uniform:convergence:NEW}). For the last integral, we use the uniform convergence of $y\mapsto \int_{x-y}^{\infty}\pr[\eps_{k}](z)\dz$ to $y\mapsto \int_{x-y}^{\infty}\ee^{-z}\dz$ (Lemma~\ref{Lem:primitive:uniform:convergence:NEW}).
\end{proof}

\section{Stability of self-similar profiles -- Proof of Theorem~\ref{Thm:closeness:profiles:NEW}}\label{Sec:proof:stability}

As a preparing step, we show that for sufficiently small $\eps>0$ all self-similar profiles are close to $\ee^{-\cdot}$ in $L^1(0,\infty)$.

\begin{proposition}\label{Prop:L1:convergence:NEW}
 For each $\delta>0$ there exists $\eps_{*}>0$ such that each self-similar profile $\pr$ with $\eps\in(0,\eps_{*})$ satisfies $\norm{\pr-\ee^{-\cdot}}_{L^1(0,\infty)}<\delta$.
\end{proposition}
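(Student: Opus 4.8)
The plan is to argue by contradiction and to promote the pointwise convergence provided by \autoref{Lem:pointwise:conv:mass:density:NEW} to convergence in $L^1$ by exhibiting a single integrable majorant, which is exactly what \autoref{Lem:profiles:unfiorm:bound} delivers. So suppose the assertion were false. Then there are $\delta>0$, a sequence $\eps_k\to 0$ and self-similar profiles $\pr[\eps_k]$ with $\norm{\pr[\eps_k]-\ee^{-\cdot}}_{L^1(0,\infty)}\geq\delta$ for every $k$. By \autoref{Lem:pointwise:conv:mass:density:NEW} we have $\pr[\eps_k](x)\to\ee^{-x}$ as $k\to\infty$ for every $x\in(0,\infty)$, hence $\pr[\eps_k]-\ee^{-\cdot}\to 0$ almost everywhere on $(0,\infty)$.

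Next I would fix some $\nu\in(0,1-\alpha)$ --- which is possible since $\alpha\in(0,1)$ --- and take $\eps_\nu>0$ and $a>0$ as in \autoref{Lem:profiles:unfiorm:bound}. Since the exponent $-\alpha-\nu$ is strictly larger than $-1$, the function $x\mapsto\weight{-\alpha-\nu}{0}(x)\ee^{-ax}$ belongs to $L^1(0,\infty)$: near $x=0$ it is comparable to $x^{-\alpha-\nu}$ with an integrable singularity, and it decays exponentially as $x\to\infty$. Consequently, for all $k$ with $\eps_k\leq\eps_\nu$,
\[
 \abs{\pr[\eps_k](x)-\ee^{-x}}\leq \pr[\eps_k](x)+\ee^{-x}\leq C\,\weight{-\alpha-\nu}{0}(x)\ee^{-ax}+\ee^{-x}
\]
for almost every $x\in(0,\infty)$, where $C$ and $a$ are independent of $k$ and of the particular profiles (this uniformity is part of the statement of \autoref{Lem:profiles:unfiorm:bound}); the right-hand side is a fixed function in $L^1(0,\infty)$.

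Having an $L^1$-majorant independent of $k$ together with almost-everywhere convergence, the dominated convergence theorem yields $\norm{\pr[\eps_k]-\ee^{-\cdot}}_{L^1(0,\infty)}\to 0$ as $k\to\infty$, which contradicts $\norm{\pr[\eps_k]-\ee^{-\cdot}}_{L^1(0,\infty)}\geq\delta$. This contradiction proves the proposition.

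I do not expect any genuinely difficult step here; the argument is essentially an assembly of results already established in \cref{Sec:previous:results,Sec:pointwise:convergence}. The one point deserving a moment of care is verifying that the majorant furnished by \autoref{Lem:profiles:unfiorm:bound} is really integrable, i.e.\@ that the singular exponent $-\alpha-\nu$ can be kept strictly above $-1$; this is precisely where the hypothesis $\alpha<1$ is used, as it would fail in the borderline case $\alpha=1$. As an alternative to the contradiction argument one could instead proceed directly by splitting $\int_0^\infty=\int_0^\rho+\int_\rho^R+\int_R^\infty$: the tail $\int_R^\infty$ is made small uniformly in $\eps$ via \autoref{Lem:exp:decay}, the piece $\int_0^\rho$ is made small uniformly in $\eps$ via \autoref{Lem:profiles:unfiorm:bound} (or \autoref{Lem:regularity:zero}), and on the fixed compact interval $[\rho,R]$ one again invokes \autoref{Lem:pointwise:conv:mass:density:NEW} together with the uniform bound and dominated convergence.
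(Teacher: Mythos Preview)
Your proof is correct and follows essentially the same approach as the paper: a contradiction argument combining the pointwise convergence from \autoref{Lem:pointwise:conv:mass:density:NEW} with the integrable majorant from \autoref{Lem:profiles:unfiorm:bound} and dominated convergence. The paper's version is slightly terser but identical in substance; your additional remark on the alternative splitting argument is a nice observation not present there.
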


\begin{proof}
 We argue by contradiction and thus assume that the claim is not true. Then, we may find $\delta_{*}>0$ and a sequence $\eps_{k}\to 0$ as $k\to 0$ such that 
 \begin{equation}\label{eq:pf:stability:1}
   \norm{\pr[\eps_{k}]-\ee^{-\cdot}}_{L^{1}(0,\infty)}\geq \delta_{*} \qquad \text{for all }k\in\N.
 \end{equation}
However, by means of Lemma~\ref{Lem:pointwise:conv:mass:density:NEW} we have $\pr[\eps_{k}](x)\to \ee^{-x}$ for all $x\in(0,\infty)$ as $k\to\infty$. Furthermore, for given $\nu\in(0,1-\alpha)$ and $\eps_{k}$ sufficiently small (i.e.\@ $k\in\N$ sufficiently large) Lemma~\ref{Lem:profiles:unfiorm:bound} yields that there exists $a>0$ such that $\pr[\eps_{k}](x)\lesssim \weight{-\alpha-\nu}{0}(x)\ee^{-ax}$. Since the right-hand side is integrable on $(0,\infty)$, Lebesgue's dominated convergence theorem implies that $\pr[\eps_{k}]\to \ee^{-\cdot}$ in $L^1(0,\infty)$ which contradicts~\eqref{eq:pf:stability:1} and thus finishes the proof.
\end{proof}

Based on Proposition~\ref{Prop:L1:convergence:NEW} we can now give the proof of Theorem~\ref{Thm:closeness:profiles:NEW}.

\begin{proof}[Proof of Theorem~\ref{Thm:closeness:profiles:NEW}]
 The second part is a direct consequence of the first one due to $\dpr{1}-\dpr{2}=(\dpr{1}-\ee^{-\cdot})+(\ee^{-\cdot}-\dpr{2})$ and the triangle inequality.
 
 To prove the first statement, we take constants $r<1$ and $R>1$ to be fixed later, and split the integral defining $\norm{\cdot}_{\X{a}{b}}$ which yields
 \begin{multline*}
  \norm{\pr-\ee^{-\cdot}}_{\X{a}{b}}\\*
  =\int_{0}^{r}\weight{a}{b}(x)\abs{\pr(x)-\ee^{-x}}\dx+\int_{r}^{R}\weight{a}{b}(x)\abs{\pr(x)-\ee^{-x}}\dx+\int_{R}^{\infty}\weight{a}{b}(x)\abs{\pr(x)-\ee^{-x}}\dx\\*
  =\int_{0}^{r}x^{a}\abs{\pr(x)-\ee^{-x}}\dx+\int_{r}^{R}\weight{a}{b}(x)\abs{\pr(x)-\ee^{-x}}\dx+\int_{R}^{\infty}x^{b}\abs{\pr(x)-\ee^{-x}}\dx.
 \end{multline*}
 Using that $x^{a}=x^{\frac{a-1}{2}}x^{\frac{a+1}{2}}\leq r^{\frac{a+1}{2}} x^{\frac{a-1}{2}}$ for $x\in(0,r)$ and $\frac{a-1}{2}>-1$ as well as $x^{b}=x^{-b}x^{2b}\leq R^{-b}x^{2b}$ for $x>R$ the right-hand side of the previous equation can be further estimated to get
 \begin{multline*}
  \norm{\pr-\ee^{-\cdot}}_{\X{a}{b}}\leq r^{\frac{a+1}{2}}\int_{0}^{\infty}x^{\frac{a-1}{2}}\bigl(\pr(x)+\ee^{-x})\dx+\max_{x\in[r,R]}\weight{a}{b}(x)\int_{0}^{\infty}\abs{\pr(x)-\ee^{-x}}\dx\\*
  +R^{-b}\int_{0}^{\infty}x^{2b}\bigl(\pr(x)+\ee^{-x}\bigr)\dx.
 \end{multline*}
 Since $\frac{a-1}{2}>-1$ we can estimate the right-hand side further by means of Lemma~\ref{Lem:moments} which yields
 \begin{equation*}
  \norm{\pr-\ee^{-\cdot}}_{\X{a}{b}}\leq  C(r^{\frac{a+1}{2}}+R^{-b})+\max_{x\in[r,R]}\weight{a}{b}(x)\int_{0}^{\infty}\abs{\pr(x)-\ee^{-x}}\dx.
 \end{equation*}
 The claim thus follows if we first choose $r<1$ and $R>1$ sufficiently small and large respectively such that $C(r^{\frac{a+1}{2}}+R^{-b})\leq \delta/2$ and then $\eps>0$ small such that $\max_{x\in[r,R]}\weight{a}{b}(x)\int_{0}^{\infty}\abs{\pr(x)-\ee^{-x}}\dx\leq \delta/2$ by means of Proposition~\ref{Prop:L1:convergence:NEW}.
\end{proof}

\section{Preparing the proof of uniqueness}\label{Sec:preparation:uniqueness}

\subsection{Notation}

In this section, we collect the key results from which we will finally deduce the uniqueness of self-similar profiles. Moreover, to simplify the notation later, we define the two bilinear forms
\begin{equation}\label{eq:def:bilinear:forms}
 \begin{split}
   \B_{2}[g,h](x)&\vcc=\frac{2}{x^2}\int_{0}^{x}\int_{x-y}^{\infty}yg(y)h(z)\dz\dy\\
   \B_{W}[g,h](x)&\vcc=\frac{1}{x^2}\int_{0}^{x}\int_{x-y}^{\infty}yW(y,z)g(y)h(z)\dz\dy.
 \end{split}
\end{equation}
\begin{remark}
 We will precise in \cref{Prop:continuity:B2,Prop:continuity:BW} where these operators are well-defined and to which spaces they map.
\end{remark}
This allows to rewrite~\eqref{eq:selfsim} in compact form as
\begin{equation}\label{eq:selfsim:abstract}
 \pr=\B_{2}[\pr,\pr]+\eps\B_{W}[\pr,\pr].
\end{equation}
Moreover, if we linearise this equation for $\eps=0$ around the explicit profile $\ee^{-x}$ this leads to the linearised operator $\LL$ which is given by
\begin{equation}\label{eq:linearised:operator:abstract}
 \LL[g]=g-\B_2[g,\exp(-\cdot)]-\B_2[\exp(-\cdot),g].
\end{equation}
We also fix a parameter 
\begin{equation}\label{eq:choice:beta:parameter}
 \beta\in(1+\alpha,2).
\end{equation}
Finally, we note that we will exploit the fact that due to Fubini's theorem we have the relation
\begin{equation}\label{eq:Fubini}
 \int_{0}^{\infty}\int_{0}^{x}\int_{x-y}^{\infty}(\cdots)\dz\dy\dx=\int_{0}^{\infty}\int_{0}^{\infty}\int_{y}^{y+z}(\cdots)\dx\dz\dy.
\end{equation}

\subsection{Continuity of bilinear forms}

\begin{proposition}\label{Prop:continuity:B2}
 The operator $\B_{2}$ as given in~\eqref{eq:def:bilinear:forms} is well-defined from $\X{-\alpha}{\beta}$ to itself and continuous in the sense that
 \begin{equation*}
  \norm*{\B_{2}[g,h]}_{\X{-\alpha}{\beta}}\lesssim \norm*{g}_{\X{-\alpha}{\beta}}\norm*{h}_{\X{-\alpha}{\beta}}.
 \end{equation*}
\end{proposition}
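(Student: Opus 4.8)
The plan is to estimate the weighted $L^1$ norm of $\B_2[g,h]$ directly using the Fubini identity~\eqref{eq:Fubini}. First I would write out
\[
 \norm{\B_2[g,h]}_{\X{-\alpha}{\beta}} = \int_0^\infty \weight{-\alpha}{\beta}(x)\,\frac{2}{x^2}\abs*{\int_0^x\int_{x-y}^\infty y\,g(y)h(z)\dz\dy}\dx \leq 2\int_0^\infty \frac{\weight{-\alpha}{\beta}(x)}{x^2}\int_0^x\int_{x-y}^\infty y\abs{g(y)}\abs{h(z)}\dz\dy\dx,
\]
and then apply~\eqref{eq:Fubini} to swap the order of integration so that the $x$-integral becomes the innermost one, ranging over $x\in(y,y+z)$ (after noting that the condition $x-y\le z$, i.e.\ $x\le y+z$, together with $x\ge y$ from the inner integral, pins down the range). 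This reduces the problem to controlling $\int_y^{y+z}\weight{-\alpha}{\beta}(x)x^{-2}\dx$ and then showing that the resulting double integral in $y,z$ factorizes into a product of two single integrals each bounded by the respective norm.

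The key step is the pointwise bound on the $x$-kernel: I expect to show that
\[
 \int_y^{y+z}\frac{\weight{-\alpha}{\beta}(x)}{x^2}\dx \lesssim \frac{\weight{-\alpha}{\beta}(y)+\weight{-\alpha}{\beta}(z)}{\max\{y,z\}^{2}}\cdot\min\{y,z\} \quad\text{(roughly)},
\]
or more precisely a bound of the form $\lesssim \weight{a_1}{b_1}(y)\weight{a_2}{b_2}(z)/(yz)$ or similar, chosen so that after multiplying by the remaining factor $y\abs{g(y)}\abs{h(z)}$ and integrating, each variable sees the weight $\weight{-\alpha}{\beta}$. Concretely, since $\beta<2$ we have $\weight{-\alpha}{\beta}(x)x^{-2}=\weight{-\alpha-2}{\beta-2}(x)$ by~\eqref{eq:weight:shift}, and $\beta-2<0$ while $-\alpha-2<-1$, so the antiderivative is integrable near the relevant endpoints; one splits into the cases $y+z\le 1$, $y\le 1\le y+z$, and $y\ge 1$ (and symmetrically in which of $y,z$ is larger) and in each regime estimates $x$ from below by $\max\{y,z\}$ or by $y+z$ to extract the factors. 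The extra factor $y$ in front is crucial: it compensates the $x^{-2}$ so that the exponent on $y$ lands at $-\alpha$ near zero (using $-\alpha - 2 + 1 + (\text{gain from }\min)$) and at $\beta$ near infinity (using that $x\sim z$ when $z\gg y$, so $x^{-2}$ is absorbed by $z$, leaving $y\cdot y^{?}$ with the right power since $\beta<2$); the symmetric split handles $y\gg z$.

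After the factorization, I would conclude with
\[
 \norm{\B_2[g,h]}_{\X{-\alpha}{\beta}} \lesssim \Bigl(\int_0^\infty \weight{-\alpha}{\beta}(y)\abs{g(y)}\dy\Bigr)\Bigl(\int_0^\infty \weight{-\alpha}{\beta}(z)\abs{h(z)}\dz\Bigr) = \norm{g}_{\X{-\alpha}{\beta}}\norm{h}_{\X{-\alpha}{\beta}},
\]
which in particular gives well-definedness (finiteness of the norm). The main obstacle I anticipate is getting the case analysis for the inner $x$-integral exactly right at both endpoints simultaneously — in particular checking that the weight $\weight{-\alpha}{\beta}$ with $\beta\in(1+\alpha,2)$ is precisely the borderline choice that makes both the behaviour near $x=0$ (where $-\alpha>-1$ is needed so that $y\abs{g(y)}$ times a near-zero power of $y$ stays integrable against $\weight{-\alpha}{\beta}(y)\abs{g(y)}$) and the behaviour near $x=\infty$ (where $\beta<2$ is needed so $x^{-2}$ beats $x^\beta$) work out, and verifying that no logarithmic corrections appear at the transition $x=1$. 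The constraint $\beta>1+\alpha$ should enter when bounding the region where $y$ is small but $z$ is order one or larger, ensuring the power of $y$ extracted is at least $-\alpha$ after accounting for the $W$-type growth built into the companion proposition; here, for $\B_2$ alone, it guarantees the $x$-integral near $x\sim z$, $y$ small, contributes $y^{1}$ (not a worse power) against the weight.
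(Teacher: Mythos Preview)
Your approach is essentially the paper's: apply the Fubini identity~\eqref{eq:Fubini}, estimate the inner integral $\int_y^{y+z}\weight{-\alpha}{\beta}(x)x^{-2}\dx$ by a case analysis, and factorize. The paper packages that case analysis as Lemma~\ref{Lem:aux:int:weight:1}, which gives the clean asymmetric bound $\int_y^{y+z}\weight{a}{b}(x)x^{-2}\dx\lesssim\weight{a-1}{0}(y)\,\weight{0}{b-1}(z)$ for $a<1$, $b\in(1,2)$; after multiplying by the extra factor $y$ this yields $\norm{g}_{\X{-\alpha}{1}}\norm{h}_{\X{0}{\beta-1}}$, and one finishes by the embedding~\eqref{eq:spaces:embedding}.

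One small correction: for $\B_2$ you only need $\beta>1$ (and $-\alpha<0$) in the final embedding step, not $\beta>1+\alpha$ --- the latter constraint enters only in Proposition~\ref{Prop:continuity:BW} for $\B_W$, where the extra $\weight{-\alpha}{\alpha}$ factors from $W$ have to be absorbed.
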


\begin{proof}
 The definition of $\norm{\cdot}_{\X{-\alpha}{\beta}}$ and~\eqref{eq:Fubini} yield
 \begin{multline*}
  \norm*{\B_{2}[g,h]}_{\X{-\alpha}{\beta}}=\int_{0}^{\infty}\abs*{\frac{2}{x^2}\int_{0}^{x}\int_{x-y}^{\infty}yg(y)h(z)\dz\dy}\weight{-\alpha}{\beta}(x)\dx\\*
  \lesssim \int_{0}^{\infty}\int_{0}^{\infty}y\abs{g(y)}\abs{h(z)}\int_{y}^{y+z}\frac{\weight{-\alpha}{\beta}(x)}{x^2}\dx\dz\dy.
 \end{multline*}
 Applying Lemma~\ref{Lem:aux:int:weight:1} and recalling~\eqref{eq:weight:shift} we further estimate
 \begin{equation*}
  \norm*{\B_{2}[g,h]}_{\X{-\alpha}{\beta}}\lesssim \int_{0}^{\infty}\abs{g(y)}y\weight{-\alpha-1}{0}(y)\dy\int_{0}^{\infty}\abs{h(z)}\weight{0}{\beta-1}(z)\dz\leq \norm{g}_{\X{-\alpha}{1}}\norm{h}_{\X{0}{\beta-1}}.
 \end{equation*}
The claim finally follows from~\eqref{eq:spaces:embedding} since $\beta>1$ and $-\alpha<0$.
\end{proof}

\begin{remark}
 The proof shows that the estimate in Proposition~\ref{Prop:continuity:B2} is non-optimal. In fact the operator has a slightly regularising effect. However, since we will not exploit the latter we only stated this weaker version.
\end{remark}

In contrast to $\B_{2}$, the operator $\B_{W}$ does not map $\X{-\alpha}{\beta}$ to itself (and also no other space $\X{a}{b}$) due to the singular behaviour of $W$. We only get the following weaker statement.

\begin{proposition}\label{Prop:continuity:BW}
 The operator $\B_{W}$ is well-defined from $\X{-\alpha}{\beta}$ to $\X{1-\alpha}{\beta}$ and continuous in the sense that
 \begin{equation*}
  \norm*{\B_{W}[g,h]}_{\X{1-\alpha}{\beta}}\lesssim \norm{g}_{\X{-\alpha}{\beta}}\norm{h}_{\X{-\alpha}{\beta}}.
 \end{equation*}
\end{proposition}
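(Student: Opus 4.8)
The plan is to mimic the proof of Proposition~\ref{Prop:continuity:B2}, the only difference being that the factor $W(y,z)$ must be absorbed using the pointwise bound from~\eqref{eq:pert:est:weight}, namely $W(y,z)\lesssim\weight{-\alpha}{\alpha}(y)\weight{-\alpha}{\alpha}(z)$. First I would write out the definition of the norm on the target space and apply Fubini via~\eqref{eq:Fubini}:
\begin{equation*}
 \norm*{\B_{W}[g,h]}_{\X{1-\alpha}{\beta}}\lesssim\int_{0}^{\infty}\int_{0}^{\infty}yW(y,z)\abs{g(y)}\abs{h(z)}\int_{y}^{y+z}\frac{\weight{1-\alpha}{\beta}(x)}{x^2}\dx\dz\dy.
\end{equation*}
Then I would insert the bound on $W$ and evaluate the inner integral $\int_{y}^{y+z}\weight{1-\alpha}{\beta}(x)x^{-2}\dx$ by means of Lemma~\ref{Lem:aux:int:weight:1} (the same auxiliary lemma used for $\B_2$, now with shifted exponents), obtaining a product of a weight in $y$ and a weight in $z$.

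The bookkeeping then goes as follows. After applying Lemma~\ref{Lem:aux:int:weight:1} the inner integral is controlled by $\weight{-1}{\beta-1}(y)$ or something comparable; combined with the explicit prefactor $y$ and the factor $\weight{-\alpha}{\alpha}(y)$ coming from $W$, and using~\eqref{eq:weight:additivity} and~\eqref{eq:weight:shift}, the $y$-weight collapses to $\weight{-\alpha}{\beta-1+\alpha}(y)$ (up to the precise constants, which I will not chase here). Similarly the $z$-integrand carries $\weight{-\alpha}{\alpha}(z)$ from $W$ times the $z$-part of the evaluated inner integral, which I expect to be of the form $\weight{-\alpha}{\beta-1+\alpha}(z)$ as well. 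Since $\beta<2$ we have $\beta-1+\alpha<1+\alpha<\beta$ and $\beta-1+\alpha$ is also $\le\beta$, so by the embedding~\eqref{eq:spaces:embedding} both factors are dominated by $\norm{g}_{\X{-\alpha}{\beta}}$ and $\norm{h}_{\X{-\alpha}{\beta}}$ respectively, yielding the claimed estimate. Well-definedness (finiteness of $\B_W[g,h](x)$ for a.e.\ $x$ and measurability) follows from the same computation applied without the outer weight.

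The one point that requires genuine care — and which explains why the target space is $\X{1-\alpha}{\beta}$ rather than $\X{-\alpha}{\beta}$ — is the behaviour of the inner integral $\int_{y}^{y+z}\weight{1-\alpha}{\beta}(x)x^{-2}\dx$ near $x=0$, i.e.\ for small $y$: the integrand behaves like $x^{-1-\alpha}$, which is integrable down to $0$ only because of the extra power $x^{1-\alpha}$ (i.e.\ the gain of one order of $x$ compared to $\weight{-\alpha}{\beta}$) in the weight of the target space; with the weaker weight the integral would diverge, reflecting the singularity of $W$ at the diagonal. I expect Lemma~\ref{Lem:aux:int:weight:1} to be stated precisely so as to handle exactly such integrals, so the main obstacle is really just verifying that the exponent arithmetic — in particular that all weight exponents produced in $y$ and $z$ stay within the range $[-\alpha,\beta]$ permitted by~\eqref{eq:spaces:embedding}, using $\beta>1+\alpha$ and $\beta<2$ — works out, together with splitting the inner integral at $x=1$ according to the definition~\eqref{eq:def:weight} of $\weight{a}{b}$ when $y<1<y+z$.
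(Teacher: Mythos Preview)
Your approach is exactly the paper's: write out the norm, apply Fubini via~\eqref{eq:Fubini}, bound $W$ by~\eqref{eq:pert:est:weight}, estimate the inner $x$-integral with Lemma~\ref{Lem:aux:int:weight:1}, and finish by the embedding~\eqref{eq:spaces:embedding}. For the record, Lemma~\ref{Lem:aux:int:weight:1} applied with $a=1-\alpha$ and $b=\beta$ gives $\int_{y}^{y+z}\weight{1-\alpha}{\beta}(x)x^{-2}\dx\lesssim\weight{-\alpha}{0}(y)\weight{0}{\beta-1}(z)$, so after multiplying by $y\,\weight{-\alpha}{\alpha}(y)\weight{-\alpha}{\alpha}(z)=\weight{1-\alpha}{1+\alpha}(y)\weight{-\alpha}{\alpha}(z)$ the resulting weights are $\weight{1-2\alpha}{1+\alpha}(y)$ and $\weight{-\alpha}{\alpha+\beta-1}(z)$ (your $z$-guess was right, your $y$-guess was not), and both embed into $\X{-\alpha}{\beta}$ using $\alpha<1$ and $\beta>1+\alpha$.
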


\begin{proof}
 The definition of $\norm{\cdot}_{\X{1-\alpha}{\beta}}$ together with \cref{eq:Fubini,eq:pert:est:weight,eq:weight:shift} gives
 \begin{multline*}
  \norm*{\B_{W}[g,h]}_{\X{1-\alpha}{\beta}}=\int_{0}^{\infty}\abs*{\int_{0}^{x}\int_{x-y}^{\infty}yW(y,z)g(y)h(z)\dz\dy}\weight{1-\alpha}{\beta}(x)\dx\\*
  \lesssim \int_{0}^{\infty}\int_{0}^{\infty}\weight{1-\alpha}{1+\alpha}(y)\weight{-\alpha}{\alpha}(z)\abs{g(y)}\abs{h(z)}\int_{y}^{y+z}\frac{\weight{1-\alpha}{\beta}(x)}{x^2}\dx\dz\dy.
 \end{multline*}
 Recalling Lemma~\ref{Lem:aux:int:weight:1} as well as~\eqref{eq:weight:shift} we get
 \begin{multline*}
  \norm*{\B_{W}[g,h]}_{\X{1-\alpha}{\beta}}\lesssim \int_{0}^{\infty}\int_{0}^{\infty}\weight{1-\alpha}{1+\alpha}(y)\weight{-\alpha}{\alpha}(z)\weight{-\alpha}{0}(y)\weight{0}{\beta-1}(z)\abs{g(y)}\abs{h(z)}\\*
  \lesssim \norm{g}_{\X{1-2\alpha}{1+\alpha}}\norm{h}_{\X{-\alpha}{\beta+\alpha-1}}.
 \end{multline*}
Since $\alpha\in(0,1)$ and $\beta\in(1+\alpha,2)$ we thus conclude together with~\eqref{eq:weight:monotonicity} that $\norm*{\B_{W}[g,h]}_{\X{1-\alpha}{\beta}}\lesssim \norm{g}_{\X{-\alpha}{\beta}}\norm{h}_{\X{-\alpha}{\beta}}$ which finishes the proof.  
\end{proof}

\subsection{Continuity and invertibility of $\LL$}

Next, we prove continuity of the linearised operator $\LL$.

\begin{proposition}\label{Prop:continuity:L:NEW}
 For all $a\in(-1,1)$ and $b>1$, the operator $\LL$ as given by~\eqref{eq:linearised:operator:abstract} maps $\X{a}{b}$ into itself and is continuous, i.e.\@
 \begin{equation*}
  \norm{\LL[g]}_{\X{a}{b}}\lesssim \norm{g}_{\X{a}{b}}.
 \end{equation*}
 In particular, $\LL$ is well-defined on $\X{a}{b}$.
\end{proposition}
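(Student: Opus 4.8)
The plan is to estimate each of the three terms in~\eqref{eq:linearised:operator:abstract} separately, namely $g$, $\B_2[g,\exp(-\cdot)]$ and $\B_2[\exp(-\cdot),g]$, and to show each is bounded in $\X{a}{b}$ by a constant times $\norm{g}_{\X{a}{b}}$. The first term is trivial. For the two bilinear terms the natural tool is the Fubini identity~\eqref{eq:Fubini} together with the auxiliary weight-integral bound (the lemma invoked in the proofs of \cref{Prop:continuity:B2,Prop:continuity:BW}), exactly as in the proof of Proposition~\ref{Prop:continuity:B2}. The only real difference is that $\exp(-\cdot)$ decays (it lies in every $\X{a'}{b'}$ with $a'>-1$), so we have more room than in Proposition~\ref{Prop:continuity:B2} and in particular we do not need to restrict the exponents to the combination $(-\alpha,\beta)$; any $a\in(-1,1)$ and $b>1$ should work.

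Concretely, for $\B_2[g,\exp(-\cdot)]$ I would write, using the definition of $\norm{\cdot}_{\X{a}{b}}$ and~\eqref{eq:Fubini},
\begin{equation*}
 \norm{\B_2[g,\exp(-\cdot)]}_{\X{a}{b}}\lesssim \int_0^\infty\int_0^\infty y\abs{g(y)}\ee^{-z}\int_y^{y+z}\frac{\weight{a}{b}(x)}{x^2}\dx\dz\dy,
\end{equation*}
then apply the auxiliary lemma to bound the inner $x$-integral by $\weight{a-1}{b-1}(y)\weight{0}{?}(z)$ up to a constant, and finally integrate out $z$ against $\ee^{-z}$ (which is harmless since the $z$-weight is a polynomial power) to arrive at $\lesssim\int_0^\infty\abs{g(y)}y\weight{a-1}{b-1}(y)\dy=\norm{g}_{\X{a}{b}}$ by~\eqref{eq:weight:shift}. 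The condition $a>-1$ enters precisely here to make the $x$-integral near zero (equivalently the resulting $y$-weight) integrable, and $b>1$ to control it near infinity. For the symmetric term $\B_2[\exp(-\cdot),g]$ the computation is the same with the roles of $g$ and $\ee^{-\cdot}$ swapped: one gets $\lesssim\int_0^\infty\abs{g(z)}\weight{a-?}{b-1}(z)\dz$ after integrating out the $\ee^{-y}$ in $y$, and again this is controlled by $\norm{g}_{\X{a}{b}}$ via~\eqref{eq:weight:monotonicity} and~\eqref{eq:weight:shift}.

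There is no substantial obstacle here; the statement is essentially a corollary of the machinery already set up for Propositions~\ref{Prop:continuity:B2} and~\ref{Prop:continuity:BW}. The one point requiring a little care is the bookkeeping of the weight exponents produced by the auxiliary integral lemma — in particular making sure that after integrating out the exponential factor the remaining weight on $g$ is dominated by $\weight{a}{b}$ — which is where the hypotheses $-1<a<1$ and $b>1$ are used (and also why $a<1$ is needed, coming from the behaviour of the inner integral when $y$ is small relative to $z$). Assembling the three estimates by the triangle inequality then gives $\norm{\LL[g]}_{\X{a}{b}}\lesssim\norm{g}_{\X{a}{b}}$, and well-definedness on $\X{a}{b}$ is immediate.
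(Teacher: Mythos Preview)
Your plan of estimating the two $\B_2$ pieces separately via Lemma~\ref{Lem:aux:int:weight:1} (the ``auxiliary lemma'') does not close for the full range $a\in(-1,1)$. The lemma is asymmetric: it gives
\[
\int_y^{y+z}\frac{\weight{a}{b}(x)}{x^2}\dx\lesssim \weight{a-1}{0}(y)\,\weight{0}{b-1}(z),
\]
so the singular weight near the origin lands on the \emph{first} slot $y$ only. For $\B_2[g,\ee^{-\cdot}]$ this is fine: one obtains $\lesssim\norm{g}_{\X{a}{1}}\lesssim\norm{g}_{\X{a}{b}}$. But for $\B_2[\ee^{-\cdot},g]$ the function $g$ sits in the second slot, and the lemma only yields $\norm{\B_2[\ee^{-\cdot},g]}_{\X{a}{b}}\lesssim\norm{g}_{\X{0}{b-1}}$. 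When $a>0$ this is \emph{not} controlled by $\norm{g}_{\X{a}{b}}$: for $z\leq 1$ one would need $1\lesssim z^{a}$, which fails as $z\to0$. So the bookkeeping you flag as ``requiring a little care'' is in fact a genuine obstruction, not just a matter of tracking exponents. (There is also a secondary issue: Lemma~\ref{Lem:aux:int:weight:1} as stated requires $b\in(1,2)$, whereas the proposition allows any $b>1$.)

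The paper avoids this by first rewriting $\LL$ in the form~\eqref{eq:L:4}, where the two $\B_2$ contributions have been combined and the resulting kernels exhibit cancellation near zero: the factor $(1-\ee^{z})$ in the first integral gains a power of $z$, and the prefactor $2(x\ee^{-x}+\ee^{-x}-1)/x^2$ of the second integral is bounded as $x\to0$ rather than of order $x^{-2}$. These gains are precisely what makes the estimate go through for all $a\in(-1,1)$, via the primitive bounds~\eqref{eq:prim:weight:2} and~\eqref{eq:prim:weight:3}, leading to $\norm{\LL[g]}_{\X{a}{b}}\lesssim\norm{g}_{\X{a}{b}}+\norm{g}_{\X{a}{b-1}}+\norm{g}_{\X{a+1}{b-1}}$. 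Your approach treats the two $\B_2$ terms as independent operators and thereby misses this cancellation; to salvage it you would need either a sharper version of the auxiliary lemma that also tracks small-$z$ behaviour, or to pass through a representation like~\eqref{eq:L:4}.
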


\begin{proof}
 We recall from \eqref{eq:L:4} that 
  \begin{equation*}
  \LL[g](x)=h(x)+\frac{2(x+1)\ee^{-x}}{x^2}\int_{0}^{x}(1-\ee^{z})g(z)\dz+\frac{2(x\ee^{-x}+\ee^{-x}-1)}{x^2}\int_{x}^{\infty}g(z)\dz.
 \end{equation*}
 One then immediately checks that 
 \begin{equation}\label{eq:pf:cont:L:1}
  \frac{2(x+1)\ee^{-x}}{x^2}\lesssim \weight{-2}{-1}(x)\ee^{-x}\qquad \text{and}\qquad \abs*{\frac{2(x\ee^{-x}+\ee^{-x}-1)}{x^2}}\lesssim \weight{0}{-2}(x).
 \end{equation}
 Moreover, we have $\abs{1-\ee^{z}}\lesssim \weight{1}{0}(z)\ee^{z}$. Thus, together with Fubini's theorem the definition of $\norm{\cdot}_{\X{a}{b}}$ and \cref{eq:pf:cont:L:1,eq:weight:shift} we obtain
 \begin{multline*}
  \norm*{\LL[g]}_{\X{a}{b}}\leq \norm{g}_{\X{a}{b}}+\int_{0}^{\infty}\frac{2(x+1)\ee^{-x}}{x^2}\weight{a}{b}(x)\int_{0}^{x}\abs{1-\ee^{z}}\abs{g(z)}\dz\dx\\*
  \shoveright{+\int_{0}^{\infty}\abs*{\frac{2(x\ee^{-x}+\ee^{-x}-1)}{x^2}}\weight{a}{b}(x)\int_{x}^{\infty}\abs{g(z)}\dz\dx}\\*
  \lesssim\norm{h}_{\X{a}{b}}+\int_{0}^{\infty}\abs{g(z)}\weight{1}{0}(z)\ee^{z}\int_{z}^{\infty}\weight{a-2}{b-1}(x)\ee^{-x}\dx\dz+\int_{0}^{\infty}\abs{g(z)}\int_{0}^{z}\weight{a}{b-2}(x)\dx\dz.
 \end{multline*}
We recall \cref{eq:prim:weight:2,eq:prim:weight:3} and note that $a\in(-1,1)$ and $b>1$. Then, using also \eqref{eq:weight:additivity} we get
\begin{multline}
 \norm*{\LL[g]}_{\X{a}{b}}\lesssim \norm{g}_{\X{a}{b}} +\int_{0}^{\infty}\abs{g(z)}\weight{a}{b-1}(z)\dz+\int_{0}^{\infty}\abs{g(z)}\weight{a+1}{b-1}(z)\dz\\*
 =\norm{g}_{\X{a}{b}}+\norm{g}_{\X{a}{b-1}}+\norm{g}_{\X{a+1}{b-1}}.
\end{multline}
The claim finally follows from~\eqref{eq:spaces:embedding}.
\end{proof}

The next proposition states that the linearised operator is invertible on suitable spaces $\X[0]{a}{b}$ with continuous inverse $\LL^{-1}$.

\begin{proposition}\label{Prop:continuity:L:inverse}
 For each $a\in(-1,1)$ and $b>1$, the operator $\LL$ is invertible on $\X[0]{a}{b}$ with bounded inverse $\LL^{-1}$ given explicitly by~\eqref{eq:def:inverse} and which maps $\X{a}{b}$ continuously to $\X[0]{a}{b}$. In particular, $\LL^{-1}$ is well-defined on $\X{a}{b}$ and we have the estimate
 \begin{equation*}
  \norm{\LL^{-1}[g]}_{\X{a}{b}}\lesssim \norm{g}_{\X{a}{b}}.
 \end{equation*}
\end{proposition}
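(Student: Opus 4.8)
The plan is to prove that $\LL$ is invertible on $\X[0]{a}{b}$ by first constructing an explicit candidate for the inverse and then verifying that it has the required mapping and boundedness properties. Since $\LL$ is a compact-type perturbation of the identity (cf.\ the structure $\LL[g] = g - \B_2[g,\ee^{-\cdot}] - \B_2[\ee^{-\cdot},g]$ together with Proposition~\ref{Prop:continuity:B2}), the natural approach is \emph{not} Fredholm theory in the abstract (which would only give invertibility modulo a finite-dimensional kernel/cokernel and no explicit control), but rather to derive the inverse formula directly from the equation $\LL[g] = f$. Writing out $\LL[g](x) = f(x)$ using the representation
\begin{equation*}
 \LL[g](x)=g(x)+\frac{2(x+1)\ee^{-x}}{x^2}\int_{0}^{x}(1-\ee^{z})g(z)\dz+\frac{2(x\ee^{-x}+\ee^{-x}-1)}{x^2}\int_{x}^{\infty}g(z)\dz,
\end{equation*}
one obtains a linear integral equation for $g$ of Volterra-plus-rank-type structure. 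The idea is to introduce the primitives $G_1(x) = \int_0^x (1-\ee^z) g(z)\dz$ and $G_2(x) = \int_x^\infty g(z)\dz$, differentiate the relation, and solve the resulting ODE system explicitly; the constraint $\int_0^\infty z g(z)\dz = 0$ (membership in $\X[0]{a}{b}$) is exactly what is needed to fix the free constant of integration and kill the one-dimensional kernel of $\LL$. This is precisely the formal derivation that the excerpt defers to \cref{Sec:inverse:derivation}, producing the formula labelled~\eqref{eq:def:inverse}; I would simply cite that derivation and take~\eqref{eq:def:inverse} as given.

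The substantive work is then to show that the operator $\LL^{-1}$ defined by~\eqref{eq:def:inverse} indeed maps $\X{a}{b}$ continuously into $\X[0]{a}{b}$ for every $a\in(-1,1)$, $b>1$, i.e.\ to prove the estimate $\norm{\LL^{-1}[g]}_{\X{a}{b}}\lesssim\norm{g}_{\X{a}{b}}$. The strategy here mirrors the proof of Proposition~\ref{Prop:continuity:L:NEW}: the explicit formula for $\LL^{-1}[g]$ will consist of $g$ itself plus several terms of the form (explicit decaying/growing prefactor)$\times$(integral of $g$ against an explicit kernel over $(0,x)$ or $(x,\infty)$). For each such term one bounds the prefactor by an appropriate weight $\weight{\cdot}{\cdot}$, bounds the auxiliary functions (powers of $\ee^{\pm x}$, polynomials) by weights via the algebra rules~\eqref{eq:weight:additivity}--\eqref{eq:weight:monotonicity}, applies Fubini to swap the order of integration, and then evaluates the inner $x$-integral using the primitive estimates for weights collected in the appendix (the analogues of \eqref{eq:prim:weight:2}, \eqref{eq:prim:weight:3} and Lemma~\ref{Lem:aux:int:weight:1}). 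The hypotheses $a>-1$ and $b>1$ are exactly the integrability thresholds that make these primitive integrals converge at $0$ and $\infty$ respectively. One then collects the resulting bounds in terms of norms $\norm{g}_{\X{a'}{b'}}$ with $a'\geq a$, $b'\leq b$ and concludes via the embedding~\eqref{eq:spaces:embedding}. Finally, that $\LL^{-1}[g]\in\X[0]{a}{b}$, i.e.\ $\int_0^\infty x\,\LL^{-1}[g](x)\dx=0$, should follow either by construction (it is built into the formula) or by a direct computation exchanging integrals; and $\LL\circ\LL^{-1}=\id$, $\LL^{-1}\circ\LL=\id$ on $\X[0]{a}{b}$ follow by substituting the formulas into one another.

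The main obstacle I anticipate is purely technical bookkeeping in the second part: the explicit inverse formula~\eqref{eq:def:inverse} is presumably considerably more complicated than $\LL$ itself (it may involve a double integral, or an iterated primitive, coming from solving the integral equation), so one has to carefully match up all prefactor weights with the correct exponents and make sure that in every term the exponent pair ends up dominated by $(a,b)$ after using $a>-1$, $b>1$, and possibly $a<1$. There is also a subtlety at the crossover point $x=1$ where the weight changes definition, and near $x=0$ where the prefactors $1/x^2$ are singular — these are controlled precisely by the exponential and algebraic vanishing of the other factors, but the estimates have to be done separately on $(0,1)$ and $(1,\infty)$ (as in Proposition~\ref{Prop:continuity:L:NEW}), which roughly doubles the number of cases. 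Because of this length, the excerpt (correctly) relegates the full argument to a separate section \cref{Sec:proof:inversion}; in the main text I would state the proposition, point to the formal derivation in \cref{Sec:inverse:derivation} for the origin of~\eqref{eq:def:inverse}, and defer the verification of the mapping and continuity properties to \cref{Sec:proof:inversion}, indicating that it proceeds along the lines just sketched.
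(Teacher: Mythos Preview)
Your proposal is essentially correct and matches the paper's approach: the explicit inverse $\A_{0}$ from~\eqref{eq:def:inverse} is taken as given from the formal derivation in \cref{Sec:inverse:derivation}, continuity of $\A_{0}\colon \X{a}{b}\to\X[0]{a}{b}$ is shown term by term using the weight algebra and primitive estimates (with Fubini), the zero-mass condition is built into the definition of $\A_{0}$ via the $\hs{1}$-correction, and surjectivity follows from verifying $\LL\circ\A_{0}=\id$ by direct substitution on a dense subspace.

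The one point where the paper differs from your sketch is injectivity. You propose checking $\A_{0}\circ\LL=\id$ on $\X[0]{a}{b}$ by the same kind of explicit substitution. The paper notes that this is possible but would be long, and instead argues injectivity indirectly: it observes $\LL[\hs{1}]=0$, then shows via the desingularised Laplace transform that any $g\in\ker\LL$ must satisfy a first-order linear ODE whose general solution is $\T[g](q)=Cq/(q+1)^{2}=\T[-C\hs{1}](q)$, so $\ker\LL=\operatorname{span}\{\hs{1}\}$; since $\int_{0}^{\infty}x\hs{1}(x)\dx=-1\neq 0$, this kernel is transverse to $\X[0]{a}{b}$. This Laplace-transform shortcut avoids a second lengthy computation, though either route is valid.
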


Since the proof of Proposition~\ref{Prop:continuity:L:inverse}, even though it is mainly elementary, is quite lengthy, we will move it to Section~\ref{Sec:proof:inversion} below.

\subsection{Estimate of the difference of profiles close to zero}

Moreover, we have the following statement which provides an estimate on the difference of two self-similar profiles.

 \begin{proposition}\label{Prop:bound:layer:est}
  For each $\mu>0$ there exist constants $C_{\mu}>0$ and $\eps_{*}>0$ such that each pair of self-similar profiles $\pr_1$ and $\pr_2$ satisfies the estimate
  \begin{equation*}
   \norm{\ee^{-\cdot}(\dpr{1}-\dpr{2})}_{\X{-\alpha}{\beta}}\lesssim \mu \norm{\dpr{1}-\dpr{2}}_{\X{-\alpha}{\beta}}+C_{\mu}\norm{(1-\ee^{-x})(\dpr{1}-\dpr{2})}_{\X{-\alpha}{\beta}}
  \end{equation*}
 provided that $\eps\in (0,\eps_{*})$.
 \end{proposition}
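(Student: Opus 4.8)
The plan is to estimate $\ee^{-x}(\dpr{1}-\dpr{2})$ directly from the self-similar equation~\eqref{eq:selfsim}, written in the abstract form~\eqref{eq:selfsim:abstract}. Subtracting the equations for $\dpr{1}$ and $\dpr{2}$ and using bilinearity gives
\begin{equation*}
 \dpr{1}-\dpr{2}=\B_2[\dpr{1}-\dpr{2},\dpr{1}]+\B_2[\dpr{2},\dpr{1}-\dpr{2}]+\eps\bigl(\B_W[\dpr{1}-\dpr{2},\dpr{1}]+\B_W[\dpr{2},\dpr{1}-\dpr{2}]\bigr).
\end{equation*}
Multiplying by $\ee^{-x}$ and integrating against the weight $\weight{-\alpha}{\beta}$, the key point is to split each inner $z$-integral in the definition~\eqref{eq:def:bilinear:forms} of the bilinear forms at some cutoff $z=L$ (to be chosen depending on $\mu$): the region $z\le L$ is handled using that on the support of the $x$-integral one has $x\ge x-y$, combined with $\ee^{-x}\le \ee^{-(x-y)}$, so the prefactor $\ee^{-x}/x^2$ can be absorbed into a smallness factor that decays in $L$; the region $z> L$ contributes a term controlled by a tail moment of $\dpr{i}$, which by \cref{Lem:moments} is bounded and also small for $L$ large. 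Meanwhile the difference $\dpr{1}-\dpr{2}$ appearing in one slot is split, via $1=(1-\ee^{-\cdot})+\ee^{-\cdot}$, into the ``good'' piece $(1-\ee^{-\cdot})(\dpr{1}-\dpr{2})$ (which lands in the $C_\mu$-term of the claimed estimate) and the ``bad'' piece $\ee^{-\cdot}(\dpr{1}-\dpr{2})$, which reproduces the left-hand side but with a small coefficient once $L$ is large and $\eps$ is small; this is then absorbed.

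More concretely, first I would record that by \cref{Lem:exp:decay} (uniform exponential decay for $x\ge1$) it suffices to bound $\int_0^1 x^{-\alpha}\ee^{-x}\abs{\dpr{1}-\dpr{2}}\dx$, since on $[1,\infty)$ the factor $\ee^{-x}$ already gives $\norm{\ee^{-\cdot}(\dpr{1}-\dpr{2})}_{L^1(1,\infty)}\lesssim \norm{\dpr{1}-\dpr{2}}_{\X{-\alpha}{\beta}}$ with an arbitrarily small constant after shrinking nothing—actually one keeps this as part of the $\mu$-term or simply notes $\ee^{-x}\le 1$ and that on $[1,\infty)$ one may bound $\ee^{-x}\le \ee^{-1}$, contributing to the $\mu$-term for $\mu$ not too small, while for small $\mu$ one uses the decay to make the tail genuinely small. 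The substantive estimate is thus near zero, where one inserts~\eqref{eq:selfsim}, uses the Fubini identity~\eqref{eq:Fubini} together with \cref{Lem:aux:int:weight:1} to perform the $x$-integral $\int_y^{y+z}\ee^{-x}x^{-2}\weight{-\alpha}{\beta}(x)\,\dx$, and observes that this integral carries an extra decaying factor in $z$ (roughly $\ee^{-y}$ times something integrable) that produces the gain in $L$.

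I would organise the bookkeeping so that, after all splittings, one arrives at an inequality of the shape
\begin{equation*}
 \norm{\ee^{-\cdot}(\dpr{1}-\dpr{2})}_{\X{-\alpha}{\beta}}\le \bigl(\theta(L)+C\eps\bigr)\norm{\dpr{1}-\dpr{2}}_{\X{-\alpha}{\beta}}+ C(L)\norm{(1-\ee^{-\cdot})(\dpr{1}-\dpr{2})}_{\X{-\alpha}{\beta}},
\end{equation*}
with $\theta(L)\to0$ as $L\to\infty$; choosing $L$ so that $\theta(L)\le \mu/2$ and then $\eps_*$ so that $C\eps_*\le \mu/2$ gives the claim with $C_\mu=C(L)$. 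Throughout, the moment bounds of \cref{Lem:moments} (with $a\in(-1,\infty)$, which covers the exponents $-\alpha$, $1-\alpha$, $1-2\alpha$ appearing after using~\eqref{eq:pert:est:weight} for the $W$-terms) guarantee all the profile integrals are finite and uniformly bounded.

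The main obstacle I expect is the precise extraction of the smallness factor from the inner $x$-integral $\int_y^{y+z}\ee^{-x}x^{-2}\weight{-\alpha}{\beta}(x)\,\dx$ and its pairing against the split weights for $y$ and $z$: one must verify that the $z\le L$ versus $z>L$ decomposition genuinely yields a coefficient tending to zero (not merely a bounded one) in front of $\norm{\ee^{-\cdot}(\dpr{1}-\dpr{2})}_{\X{-\alpha}{\beta}}$, which hinges on the exponential factor $\ee^{-x}\ge \ee^{-(y+z)}$ being available to beat the polynomial weight—this is exactly the point where the regularity/decay lemmas and the choice $\beta<2$ (so that $x^{-2}\weight{-\alpha}{\beta}$ is integrable at the origin, and controllable at infinity via $\ee^{-x}$) are used. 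The rest is routine though somewhat tedious weight arithmetic via \cref{eq:weight:additivity,eq:weight:shift,eq:weight:monotonicity}.
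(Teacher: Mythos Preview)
Your approach has a genuine gap: the coefficient you obtain in front of the ``bad'' piece $\norm{\ee^{-\cdot}(\dpr{1}-\dpr{2})}_{\X{-\alpha}{\beta}}$ is not small, so you cannot absorb it. Concretely, take the term $\B_2[\dpr{1}-\dpr{2},\dpr{1}]$. After multiplying by $\ee^{-x}\weight{-\alpha}{\beta}(x)$, integrating, and applying~\eqref{eq:Fubini}, the inner $x$-integral is $\int_{y}^{y+z}\ee^{-x}x^{-2}\weight{-\alpha}{\beta}(x)\,\dx$. For small $y$ this behaves like $y^{-\alpha-1}$, because $\ee^{-x}\approx 1$ near $x=0$; the exponential gives no gain precisely where the difficulty lies. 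Paired with the factor $y$ from the first slot of $\B_2$, you obtain $\int y^{-\alpha}\abs{\dpr{1}-\dpr{2}}(y)\,\dy$ multiplied by a coefficient $\approx\int_{0}^{\infty}\dpr{1}(z)\,\dz$, which converges to $1$ (not to $0$) as $\eps\to 0$. Splitting at $z=L$ does not help: the region $z\le L$ carries $\int_{0}^{L}\dpr{1}(z)\,\dz\approx 1$, and your claimed ``smallness factor that decays in $L$'' is not there. Splitting $\dpr{1}-\dpr{2}=(1-\ee^{-\cdot})(\dpr{1}-\dpr{2})+\ee^{-\cdot}(\dpr{1}-\dpr{2})$ only pushes the problem: the second piece reproduces the left-hand side with coefficient $\approx 1$, so the inequality reads $\text{LHS}\le (1+o(1))\cdot\text{LHS}+\ldots$, which is useless.

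The paper avoids this by working not with~\eqref{eq:selfsim:abstract} but with the \emph{boundary layer equation}~\eqref{eq:bl:1}, obtained by differentiating~\eqref{eq:selfsim}, viewing the result as a linear ODE for $\pr$ with source, and integrating back (this is why differentiability of profiles, Proposition~\ref{Prop:differentiability}, is needed). The crucial structural gain is that~\eqref{eq:bl:1} expresses $\pr(x)$ as $\int_{x}^{\infty}(x/z)^{\kappa}z^{-2}[\text{convolution in }z]\,\dz$, with the singular factor $z^{-2}$ sitting \emph{inside} the $z$-integral rather than as an outer $x^{-2}$. After taking differences, symmetrising the convolution so that $\dpr{1}-\dpr{2}$ carries no factor of $y$ (the term $\J_3$ in~\eqref{eq:bl:0}), and performing the $x$-integral via~\eqref{eq:prim:weight:1}, one lands on $\int_{0}^{\infty}\abs{\dpr{1}-\dpr{2}}(y)\,\dy$ \emph{without} the weight $y^{-\alpha}$. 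This unweighted $L^1$ norm is then controlled by the interpolation \cref{Lem:est:kappa}, which is where the $\mu$--$C_\mu$ splitting actually originates. For $\alpha\ge 1/2$ the paper also needs the lower bound~\eqref{eq:W:lower:bound} and the resulting exponential decay of profiles near zero (\cref{Lem:diff:Phi:lower:bound,Lem:regularising:effect,Lem:reg:ef:diff:Phi}) to handle the terms $\K_2$ and $\J_2$; your proposal does not address this regime at all.
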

 
The proof of this result is again relatively long and slightly technical which is why we postpone it to Section~\ref{Sec:proof:bl}.

\section{Uniqueness of profiles}\label{Sec:proof:uniqueness}

Based on the results collected in Section~\ref{Sec:preparation:uniqueness} we will now show that self-similar profiles are unique. The abstract argument can be interpreted as a non-linear version of the implicit function theorem and has already been used in the previous works \cite{NTV15,Thr17a} for the Laplace-transformed quantities. Due to this the proof below is essentially the same as the ones in \cite{NTV15,Thr17a}. However, the essential difference lies in the proofs of the key estimates \cref{Thm:closeness:profiles:NEW,Prop:continuity:B2,Prop:continuity:BW,Prop:continuity:L:inverse,Prop:bound:layer:est} which are now done in an $L^1$ setting rather than for the Laplace transform. 

\begin{proof}[Proof of Theorem~\ref{Thm:uniqueness}]
  Let $\dpr{1}$ and $\dpr{2}$ be two self-similar profiles, i.e.\@ both satisfying~\eqref{eq:selfsim:abstract}. Taking the difference, we obtain
 \begin{equation*}
  \dpr{1}-\dpr{2}=\B_{2}[\dpr{1},\dpr{1}]+\eps\B_{W}[\dpr{1},\dpr{1}]-\B_{2}[\dpr{2},\dpr{2}]+\eps\B_{W}[\dpr{2},\dpr{2}].
 \end{equation*}
 Using the bilinearity of $\B_{2}$ and $\B_{W}$ we can rearrange this equation to get
 \begin{multline*}
  (\dpr{1}-\dpr{2})-\B_{2}\bigl[(\dpr{1}-\dpr{2}),\ee^{-\cdot}\bigr]-\B_{2}\bigl[\ee^{-\cdot},(\dpr{1}-\dpr{2})\bigr]\\*
  =\B_{2}\bigl[(\dpr{1}-\dpr{2}),(\dpr{1}-\ee^{-\cdot})\bigr]+\B_{2}\bigl[(\dpr{2}-\ee^{-\cdot}),(\dpr{1}-\dpr{2})\bigr]\\*
  +\eps\Bigl(\B_{W}\bigl[(\dpr{1}-\dpr{2}),\dpr{1}\bigr]+\B_{W}\bigl[\dpr{2},(\dpr{1}-\dpr{2})\bigr]\Bigr).
 \end{multline*}
 Recalling~\eqref{eq:linearised:operator:abstract} this can be rewritten as
 \begin{multline*}
  \LL[\dpr{1}-\dpr{2}]=\B_{2}\bigl[(\dpr{1}-\dpr{2}),(\dpr{1}-\ee^{-\cdot})\bigr]+\B_{2}\bigl[(\dpr{2}-\ee^{-\cdot}),(\dpr{1}-\dpr{2})\bigr]\\*
  +\eps\Bigl(\B_{W}\bigl[(\dpr{1}-\dpr{2}),\dpr{1}\bigr]+\B_{W}\bigl[\dpr{2},(\dpr{1}-\dpr{2})\bigr]\Bigr).
 \end{multline*}
 We recall from~\eqref{eq:normalisation} that we normalised $\dpr{1}$ and $\dpr{2}$ to have total mass one. Thus, in particular, we have $\dpr{1}-\dpr{2}\in\X[0]{-\alpha}{\beta}$ and by means of Proposition~\ref{Prop:continuity:L:inverse}, the previous equation can be transformed into
 \begin{multline*}
  \dpr{1}-\dpr{2}=\LL^{-1}\Bigl[\B_{2}\bigl[(\dpr{1}-\dpr{2}),(\dpr{1}-\ee^{-\cdot})\bigr]+\B_{2}\bigl[(\dpr{2}-\ee^{-\cdot}),(\dpr{1}-\dpr{2})\bigr]\Bigr]\\*
  +\eps \LL^{-1}\Bigl[\B_{W}\bigl[(\dpr{1}-\dpr{2}),\dpr{1}\bigr]+\B_{W}\bigl[\dpr{2},(\dpr{1}-\dpr{2})\bigr]\Bigr].
 \end{multline*}
 Multiplying by $(1-\ee^{-\cdot})$ and taking the norm $\norm{\cdot}_{\X{-\alpha}{\beta}}$ we find
 \begin{multline*}
  \norm{(1-\ee^{-\cdot})(\dpr{1}-\dpr{2})}_{\X{-\alpha}{\beta}}\\*
  \leq \norm*{(1-\ee^{-\cdot})\LL^{-1}\Bigl[\B_{2}\bigl[(\dpr{1}-\dpr{2}),(\dpr{1}-\ee^{-\cdot})\bigr]+\B_{2}\bigl[(\dpr{2}-\ee^{-\cdot}),(\dpr{1}-\dpr{2})\bigr]\Bigr]}_{\X{-\alpha}{\beta}}\\*
  +\eps \norm*{(1-\ee^{-\cdot})\LL^{-1}\Bigl[\B_{W}\bigl[(\dpr{1}-\dpr{2}),\dpr{1}\bigr]+\B_{W}\bigl[\dpr{2},(\dpr{1}-\dpr{2})\bigr]\Bigr]}_{\X{-\alpha}{\beta}}.
 \end{multline*}
By means of \cref{Lem:norm:regularising,Prop:continuity:L:inverse} the right-hand side can be estimated further as
 \begin{multline*}
  \norm{(1-\ee^{-\cdot})(\dpr{1}-\dpr{2})}_{\X{-\alpha}{\beta}}\\*
  \leq \norm*{\LL^{-1}\Bigl[\B_{2}\bigl[(\dpr{1}-\dpr{2}),(\dpr{1}-\ee^{-\cdot})\bigr]+\B_{2}\bigl[(\dpr{2}-\ee^{-\cdot}),(\dpr{1}-\dpr{2})\bigr]\Bigr]}_{\X{-\alpha}{\beta}}\\*
  \shoveright{+\eps \norm*{\LL^{-1}\Bigl[\B_{W}\bigl[(\dpr{1}-\dpr{2}),\dpr{1}\bigr]+\B_{W}\bigl[\dpr{2},(\dpr{1}-\dpr{2})\bigr]\Bigr]}_{\X{1-\alpha}{\beta}}}\\*
  \lesssim \norm*{\B_{2}\bigl[(\dpr{1}-\dpr{2}),(\dpr{1}-\ee^{-\cdot})\bigr]+\B_{2}\bigl[(\dpr{2}-\ee^{-\cdot}),(\dpr{1}-\dpr{2})\bigr]}_{\X{-\alpha}{\beta}}\\*
  +\eps \norm*{\B_{W}\bigl[(\dpr{1}-\dpr{2}),\dpr{1}\bigr]+\B_{W}\bigl[\dpr{2},(\dpr{1}-\dpr{2})\bigr]}_{\X{1-\alpha}{\beta}}.
 \end{multline*}
 Thus, \cref{Prop:continuity:B2,Prop:continuity:BW} yield
 \begin{multline*}
  \norm{(1-\ee^{-\cdot})(\dpr{1}-\dpr{2})}_{\X{-\alpha}{\beta}}\lesssim \Bigl(\norm{\dpr{1}-\ee^{-\cdot}}_{\X{-\alpha}{\beta}}+\norm{\dpr{2}-\ee^{-\cdot}}_{\X{-\alpha}{\beta}}\Bigr)\norm{\dpr{1}-\dpr{2}}_{\X{-\alpha}{\beta}}\\*
  +\eps \Bigl(\norm{\dpr{1}}_{\X{-\alpha}{\beta}}+\norm{\dpr{2}}_{\X{-\alpha}{\beta}}\Bigr)\norm{\dpr{1}-\dpr{2}}_{\X{-\alpha}{\beta}}.
 \end{multline*}
 As a consequence of \cref{Thm:closeness:profiles:NEW,Lem:moments} we thus get
 \begin{equation}\label{eq:proof:unique:small}
  \norm{(1-\ee^{-\cdot})(\dpr{1}-\dpr{2})}_{\X{-\alpha}{\beta}}\lesssim o(\eps) \norm{\dpr{1}-\dpr{2}}_{\X{-\alpha}{\beta}} \qquad \text{with }o(\eps)\to 0\quad \text{as }\eps \to 0.
 \end{equation}
 Writing $(\dpr{1}-\dpr{2})=\ee^{-\cdot}(\dpr{1}-\dpr{2})+(1-\ee^{-\cdot})(\dpr{1}-\dpr{2})$ and recalling in addition Proposition~\ref{Prop:bound:layer:est} we get for each $\mu>0$ that
 \begin{multline*}
  \norm{\dpr{1}-\dpr{2}}_{\X{-\alpha}{\beta}}\lesssim \norm{\ee^{-\cdot}(\dpr{1}-\dpr{2})}_{\X{-\alpha}{\beta}}+\norm{(1-\ee^{-\cdot})(\dpr{1}-\dpr{2})}_{\X{-\alpha}{\beta}}\\*
  \lesssim \mu \norm{\dpr{1}-\dpr{2}}_{\X{-\alpha}{\beta}} +(C_{\mu}+1)\norm{(1-\ee^{-\cdot})(\dpr{1}-\dpr{2})}_{\X{-\alpha}{\beta}}\\*
  \lesssim \Bigl(\mu+(C_{\mu}+1)o(\eps)\Bigr)\norm{\dpr{1}-\dpr{2}}_{\X{-\alpha}{\beta}}.
 \end{multline*}
To conclude the proof, we take first $\mu>0$ and then $\eps>0$ sufficiently small to get the inequality $\norm{\dpr{1}-\dpr{2}}_{\X{-\alpha}{\beta}}\leq \frac{1}{2}\norm{\dpr{1}-\dpr{2}}_{\X{-\alpha}{\beta}}$ which implies $\dpr{1}=\dpr{2}$.
\end{proof}

\section{Proof of Proposition~\ref{Prop:bound:layer:est}}\label{Sec:proof:bl}

 This section is devoted to the proof of Proposition~\ref{Prop:bound:layer:est} which is relatively technical. To simplify the structure of the actual proof as much as possible, we proceed as follows. In the next subsection, we recall the \emph{boundary layer equation} together with some notation from \cite{NTV15,NTV16} (see also~\cite{Thr17a}). The following two subsections contain then a series of lemmas which provide preliminary results and estimates to prepare the actual proof of Proposition~\ref{Prop:bound:layer:est} in Section~\ref{Sec:actual:proof:bl}. We also note that most arguments in this section follow corresponding ones in \cite{NTV15,Thr17a}. However, as already explained in Section~\ref{Sec:comparison:old:proof}, the complete reasoning in proving Proposition~\ref{Prop:bound:layer:est} in summary is much simpler and to stress this point and to be self-contained, we present all proofs.

\subsection{The boundary layer equation}

We recall in this section the boundary layer equation which has been derived in~\cite{NTV15}. For this, we also recapitulate some notation introduced in~\cite{NTV15} and which we reuse here with slight adaptations again to simplify the comparison. Let $\dpr{k}$ be a self-similar profile, i.e.\@ a solution to~\eqref{eq:selfsim}. Then we define the expressions
\begin{equation}\label{eq:def:beta}
 \beta_{2}[\dpr{k}]\vcc=2\int_{0}^{\infty}\dpr{k}(z)\dz \quad \text{and}\quad \beta_{W}[\dpr{k}](x)\vcc=\int_{0}^{\infty}W(x,z)\dpr{k}(z)\dz.
\end{equation}
Moreover, we introduce the operator
\begin{equation}\label{eq:def:Phi}
 \Phi[\dpr{k}](x)\vcc=\eps\int_{x}^{\infty}\frac{\beta_{W}[\dpr{k}](y)}{y}\ee^{-y}\dy
\end{equation}
as well as the constant
\begin{equation}\label{eq:def:kappa}
 \kappa[\dpr{k}]\vcc=\beta_{2}[\dpr{k}]-2.
\end{equation}
To shorten the notation at some places, we also write
\begin{equation*}
 \Phi_{k}\vcc=\Phi[\dpr{k}]\qquad \text{and}\qquad  \kappa_{k}\vcc=\kappa[\dpr{k}].
\end{equation*}
With this, we can now recall the boundary layer equation from~\cite{NTV15}. Precisely, each self-similar profile $\dpr{k}$ satisfies
\begin{multline}\label{eq:bl:1}
 \dpr{k}(x)=-\eps \int_{x}^{\infty}\Bigl(\frac{x}{z}\Bigr)^{\kappa[\dpr{k}]}\ee^{\Phi[\dpr{k}](z)-\Phi[\dpr{k}](x)}\beta_{W}[\dpr{k}](z)\frac{1-\ee^{-z}}{z}\dpr{k}(z)\dz\\*
 +\int_{x}^{\infty}\Bigl(\frac{x}{z}\Bigr)^{\kappa[\dpr{k}]}\frac{1}{z^2}\ee^{\Phi[\dpr{k}](z)-\Phi[\dpr{k}](x)}\int_{0}^{z}K_{\eps}(y,z-y)y\dpr{k}(y)\dpr{k}(z-y)\dy\dz.
\end{multline}
\begin{remark}
 We note that the derivation of this equation requires the self-similar profiles to be differentiable which is why we provide this property in Proposition~\ref{Prop:differentiability}.
\end{remark}
Since we will have to derive an estimate for the difference of two solutions, let $\dpr{1}$ and $\dpr{2}$ be self-similar profiles which thus both satisfy~\eqref{eq:bl:1}. We take the difference of the corresponding equations and rewrite the right-hand side which leads to
\begin{multline}\label{eq:bl:0}
 \dpr{1}(x)-\dpr{2}(x)=-\eps \int_{x}^{\infty}\Bigl[(x/z)^{\kappa_{1}}-(x/z)^{\kappa_{2}}\Bigr]\ee^{\Phi_{1}(z)-\Phi_{1}(x)}\beta_{W}[\dpr{1}](z)\frac{1-\ee^{-z}}{z}\dpr{1}(z)\dz\\*
 -\eps \int_{x}^{\infty}(x/z)^{\kappa_{2}}\Bigl[\ee^{\Phi_{1}(z)-\Phi_{1}(x)}-\ee^{\Phi_{2}(z)-\Phi_{2}(x)}\Bigr]\beta_{W}[\dpr{1}](z)\frac{1-\ee^{-z}}{z}\dpr{1}(z)\dz\\*
 -\eps \int_{x}^{\infty}(x/z)^{\kappa_{2}}\ee^{\Phi_{2}(z)-\Phi_{2}(x)}\Bigl[\beta_{W}[\dpr{1}](z)-\beta_{W}[\dpr{2}](z)\Bigr]\frac{1-\ee^{-z}}{z}\dpr{1}(z)\dz\\*
 -\eps \int_{x}^{\infty}(x/z)^{\kappa_{2}}\ee^{\Phi_{2}(z)-\Phi_{2}(x)}\beta_{W}[\dpr{2}](z)\frac{1-\ee^{-z}}{z}\Bigl[\dpr{1}(z)-\dpr{2}(z)\Bigr]\dz\\*
 +\int_{x}^{\infty}\Bigl[(x/z)^{\kappa_{1}}-(x/z)^{\kappa_{2}}\Bigr]\frac{1}{z^2}\ee^{\Phi_{1}(z)-\Phi_{1}(x)}\int_{0}^{z}K_{\eps}(y,z-y)y\dpr{1}(y)\dpr{1}(z-y)\dy\dz\\*
 +\int_{x}^{\infty}(x/z)^{\kappa_{2}}\frac{1}{z^2}\Bigl[\ee^{\Phi_{1}(z)-\Phi_{1}(x)}-\ee^{\Phi_{2}(z)-\Phi_{2}(x)}\Bigr]\int_{0}^{z}K_{\eps}(y,z-y)y\dpr{1}(y)\dpr{1}(z-y)\dy\dz\\*
 +\int_{x}^{\infty}\!(x/z)^{\kappa_{2}}\frac{\ee^{\Phi_{2}(z)-\Phi_{2}(x)}}{z^2}\int_{0}^{z}K_{\eps}(y,z-y)\Bigl[\dpr{1}(y)-\dpr{2}(y)\Bigl]\Bigl(y\dpr{1}(z-y)+(z-y)\dpr{2}(z-y)\Bigr)\dy\dz\\*
 =\vcc -\eps\sum_{k=1}^{4} \K_{k}[\dpr{1},\dpr{2}](x)+\sum_{k=1}^{3}\J_{k}[\dpr{1},\dpr{2}](x).
\end{multline}
Note that in the last step we also exploited the symmetry of the kernel $K_{\eps}(x,y)$. Precisely, rewriting 
\begin{multline*}
 \dpr{1}(y)\dpr{1}(z-y)-\dpr{2}(y)\dpr{2}(z-y)\\*
 =\bigl(\dpr{1}(y)-\dpr{2}(y)\bigr)\dpr{1}(z-y)+\dpr{2}(y)\bigl(\dpr{1}(z-y)-\dpr{2}(z-y)\bigr)
\end{multline*}
 and using the change of variables $y\mapsto z-y$ in the second part of the integral together with the symmetry of $K_{\eps}(x,y)$ we get
 \begin{multline*}
  \int_{0}^{z}K_{\eps}(y,z-y)y\bigl(\dpr{1}(y)\dpr{1}(z-y)-\dpr{2}(y)\dpr{2}(z-y)\bigr)\dy\\*
  =\int_{0}^{z}K_{\eps}(y,z-y)y\Bigl(\bigl(\dpr{1}(y)-\dpr{2}(y)\bigr)\dpr{1}(z-y)+\dpr{2}(y)\bigl(\dpr{1}(z-y)-\dpr{2}(z-y)\bigr)\Bigr)\dy\\*
  \shoveleft{=\int_{0}^{z}K_{\eps}(y,z-y)y\bigl(\dpr{1}(y)-\dpr{2}(y)\bigr)\dpr{1}(z-y)\dy}\\*
  \shoveright{+\int_{0}^{z}K_{\eps}(z-y,y)(z-y)\dpr{2}(z-y)\bigl(\dpr{1}(y)-\dpr{2}(y)\bigr)\dy}\\*
  =\int_{0}^{z}K_{\eps}(y,z-y)\Bigl(\dpr{1}(y)-\dpr{2}(y)\Bigl)\Bigl(y\dpr{1}(z-y)+(z-y)\dpr{2}(z-y)\Bigr)\dy.
 \end{multline*}
 The task now consists in estimating the expressions $\exp(-\cdot )\K_{k}[\dpr{1},\dpr{2}]$ for $k=1\ldots 4$ and $\exp(-\cdot)\J_{k}[\dpr{1},\dpr{2}]$ for $k=1\ldots 3$ and we will treat the respective terms separately. Before we give the actual estimates, let us collect several preliminary results to simplify the structure of the following proofs.

 \subsection{Preparatory  estimates}
 
 The first result which we prove provides estimates on two auxiliary integrals.
 
 \begin{lemma}\label{Lem:aux:int:bl:est}
  Assume $\alpha\in(0,1)$ and let $\beta$ as in~\eqref{eq:choice:beta:parameter}. For $a\in[0,2\alpha]$ and $b\leq 0$ we have
  \begin{align*}
   \int_{0}^{\infty}\int_{0}^{\infty}\weight{-a}{b}(y+z)W(y,z)g(y)h(z)\dz\dy&\lesssim \norm{g}_{\X{-\alpha}{\beta}}\norm{h}_{\X{-\alpha}{\beta}}
   \shortintertext{and}
   \int_{0}^{\infty}\int_{0}^{\infty}\weight{-a}{b}(y+z)K_{\eps}(y,z)g(y)h(z)\dz\dy&\lesssim \norm{g}_{\X{-\alpha}{\beta}}\norm{h}_{\X{-\alpha}{\beta}}
  \end{align*}
 for all $f,g\in\X{-\alpha}{\beta}$.
 \end{lemma}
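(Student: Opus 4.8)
The key structural observation is that the inner weight $\weight{-a}{b}(y+z)$ with $a\in[0,2\alpha]$ and $b\leq 0$ can be dominated by a product of one-variable weights in $y$ and $z$ after splitting according to which of $y,z$ is larger. Since $b\leq 0$ and $a\geq 0$, the function $\weight{-a}{b}$ is bounded above by $\weight{-a}{0}$ by~\eqref{eq:weight:monotonicity}, so it suffices to control the integral with the weight $\weight{-a}{0}(y+z)$. The plan is to treat separately the regions $\{y\leq z\}$ and $\{z\leq y\}$; by the symmetry of the claimed bound in $g$ and $h$ (and since $W$ and $K_\eps$ are symmetric), it is enough to handle $\{z\leq y\}$, where $y+z$ is comparable to $y$.

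\textbf{Key steps.} First I would record the elementary pointwise bound: for $z\leq y$ one has $y\leq y+z\leq 2y$, hence $\weight{-a}{0}(y+z)\lesssim \weight{-a}{0}(y)$ using that $\weight{-a}{0}$ is monotone (as $-a\leq 0\leq 0$, it is non-increasing) and that $\weight{c}{d}(2y)\lesssim \weight{c}{d}(y)$ up to constants. Combined with the pointwise estimate~\eqref{eq:pert:est:weight}, namely $W(y,z)\lesssim \weight{-\alpha}{\alpha}(y)\weight{-\alpha}{\alpha}(z)$ and the identical bound for $K_\eps$, the integrand over $\{z\leq y\}$ is bounded by
\begin{equation*}
 \weight{-a}{0}(y)\,\weight{-\alpha}{\alpha}(y)\,\weight{-\alpha}{\alpha}(z)\,\abs{g(y)}\,\abs{h(z)}
 =\weight{-a-\alpha}{\alpha}(y)\,\weight{-\alpha}{\alpha}(z)\,\abs{g(y)}\,\abs{h(z)},
\end{equation*}
where I used~\eqref{eq:weight:additivity}. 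Integrating in $z$ first gives a factor $\int_0^\infty \weight{-\alpha}{\alpha}(z)\abs{h(z)}\dz=\norm{h}_{\X{-\alpha}{\alpha}}\lesssim\norm{h}_{\X{-\alpha}{\beta}}$ by~\eqref{eq:spaces:embedding} since $\beta>1>\alpha$; integrating in $y$ gives $\int_0^\infty\weight{-a-\alpha}{\alpha}(y)\abs{g(y)}\dy=\norm{g}_{\X{-a-\alpha}{\alpha}}$. The final point is to check $\norm{g}_{\X{-a-\alpha}{\alpha}}\lesssim\norm{g}_{\X{-\alpha}{\beta}}$: by~\eqref{eq:weight:monotonicity} this holds provided $-\alpha\leq -a-\alpha$, i.e.\ $a\geq 0$ (true), for the behaviour near zero, and $\alpha\leq\beta$ (true, since $\beta>1+\alpha$) for the behaviour at infinity. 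The constraint $a\leq 2\alpha$ is what guarantees $-a-\alpha\geq -3\alpha$, but more to the point it will be used to ensure all exponents stay in the admissible range where the $\X{\cdot}{\cdot}$ norms make sense; in fact with $a\leq 2\alpha<2$ the weight exponent near zero, $-a-\alpha$, is bounded below and the embedding is legitimate. Then I would swap the roles of $g$ and $h$ for the region $\{y\leq z\}$ and add the two contributions.

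\textbf{Main obstacle.} There is essentially no hard analytic step here — the lemma is a bookkeeping exercise with the weight algebra~\eqref{eq:weight:additivity}–\eqref{eq:weight:monotonicity} and Tonelli's theorem. The one point requiring a little care is verifying that after the splitting, the one-variable weight exponents that appear on $g$ (respectively $h$) genuinely lie below $(-\alpha,\beta)$ in the partial order of~\eqref{eq:weight:monotonicity}, so that the embedding~\eqref{eq:spaces:embedding} applies; this is where the hypotheses $a\in[0,2\alpha]$ and $\beta\in(1+\alpha,2)$ are consumed. I expect the only mild subtlety is that the bound $\weight{c}{d}(2y)\lesssim\weight{c}{d}(y)$ needs the exponents $c,d$ to be bounded, which they are (all the exponents appearing are explicit and finite), so the implicit constants are uniform.
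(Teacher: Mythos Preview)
Your approach has a genuine gap, and it stems from combining the region split $\{z\leq y\}$ with the \emph{symmetric} product bound $W(y,z)\lesssim \weight{-\alpha}{\alpha}(y)\weight{-\alpha}{\alpha}(z)$. On $\{z\leq y\}$ this gives the integrand bound $\weight{-a-\alpha}{\alpha}(y)\weight{-\alpha}{\alpha}(z)\abs{g(y)}\abs{h(z)}$, and you then need $\norm{g}_{\X{-a-\alpha}{\alpha}}\lesssim\norm{g}_{\X{-\alpha}{\beta}}$. But the embedding~\eqref{eq:spaces:embedding} requires $-\alpha\leq -a-\alpha$, which is $a\leq 0$, not $a\geq 0$ as you wrote; for any $a>0$ the weight $y^{-a-\alpha}$ is strictly more singular at the origin than $y^{-\alpha}$, so the inequality goes the wrong way and the step fails. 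The hypothesis $a\leq 2\alpha$ is never actually used in your argument.

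The fix is easy and brings you back to essentially the paper's proof: on $\{z\leq y\}$ one has $(z/y)^\alpha\leq 1$, so the sharper bound $W(y,z)\lesssim (y/z)^\alpha=y^\alpha z^{-\alpha}$ is available. Combined with $\weight{-a}{0}(y+z)\lesssim\weight{-a}{0}(y)$ and~\eqref{eq:weight:shift} this yields $\weight{\alpha-a}{\alpha}(y)\,\weight{-\alpha}{-\alpha}(z)$, and now the embedding $\norm{g}_{\X{\alpha-a}{\alpha}}\lesssim\norm{g}_{\X{-\alpha}{\beta}}$ requires $-\alpha\leq\alpha-a$, i.e.\ exactly $a\leq 2\alpha$. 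The paper organises the same idea without a region split: it reduces to $a=2\alpha$, uses that $\weight{-2\alpha}{0}(y+z)$ is bounded by both $\weight{-2\alpha}{0}(y)$ and $\weight{-2\alpha}{0}(z)$, and applies the $y$-bound to the term $(y/z)^\alpha$ and the $z$-bound to $(z/y)^\alpha$, so that in each case the $-2\alpha$ from the weight meets the $+\alpha$ from the kernel to produce the target exponent $-\alpha$.
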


 \begin{proof}
  Since~\eqref{eq:Ass:K2} gives $W(y,z)\lesssim ((y/z)^{\alpha}+(z/y)^{\alpha})$ but also implies $K_{\eps}(y,z)\lesssim ((y/z)^{\alpha}+(z/y)^{\alpha})$, it suffices to show the stated estimate with $K_{\eps}$ or respectively $W$ replaced by $((y/z)^{\alpha}+(z/y)^{\alpha})$. Moreover, due to~\eqref{eq:weight:monotonicity} it is sufficient to consider only $a=2\alpha$ and $b=0$. For this case, we note that we have both, $\weight{-2\alpha}{0}(y+z)\leq \weight{-2\alpha}{0}(y)$ and $\weight{-2\alpha}{0}(y+z)\leq\weight{-2\alpha}{0}(z)$ which yields together with~\eqref{eq:weight:shift} and $x^{-\alpha}=\weight{-\alpha}{-\alpha}(x)$ that
  \begin{multline*}
   \int_{0}^{\infty}\int_{0}^{\infty}\weight{-2\alpha}{0}(y+z)\biggl(\Bigl(\frac{y}{z}\Bigr)^{\alpha}+\Bigl(\frac{z}{y}\Bigr)^{\alpha}\biggr)g(y)h(z)\dz\dy\\*
   \lesssim \int_{0}^{\infty}\int_{0}^{\infty}\bigl(\weight{-\alpha}{\alpha}(y)\weight{-\alpha}{-\alpha}(z)+\weight{-\alpha}{-\alpha}(y)\weight{-\alpha}{\alpha}(z)\bigr)g(y)h(z)\dz\dy\\*
   =\norm{g}_{\X{-\alpha}{\alpha}}\norm{h}_{\X{-\alpha}{-\alpha}}+\norm{g}_{\X{-\alpha}{-\alpha}}\norm{h}_{\X{-\alpha}{\alpha}}.
  \end{multline*}
 This estimate together with~\eqref{eq:spaces:embedding} finishes the proof since $\alpha<1+\alpha<\beta$.
 \end{proof}
 
 The next lemma shows that the constant $\kappa$, as defined in~\eqref{eq:def:kappa}, becomes small if $\eps$ is small.

  \begin{lemma}\label{Lem:smallness:kappa}
  For each $\eps>0$ there exists $\nu_{\eps}\geq 0$ satisfying $\nu_{\eps}\to 0$ as $\eps\to 0$ such that
  \begin{equation*}
   \abs{\kappa[\pr]}\leq \nu_{\eps}
  \end{equation*}
 for all self-similar profiles $\pr$ and $\kappa$ as defined in~\eqref{eq:def:kappa}.
 \end{lemma}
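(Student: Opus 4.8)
\textbf{Proof plan for Lemma~\ref{Lem:smallness:kappa}.}
The plan is to show that $\kappa[\pr]=\beta_{2}[\pr]-2 = 2\int_{0}^{\infty}\pr(z)\dz - 2$ tends to zero uniformly over all self-similar profiles as $\eps\to 0$. Since $\pr[0](z)=\ee^{-z}$ has $\int_{0}^{\infty}\ee^{-z}\dz = 1$, we have $\kappa[\pr[0]]=0$, so the statement amounts to the uniform convergence $\int_{0}^{\infty}\pr(z)\dz \to 1$ as $\eps\to 0$. The natural way to make this quantitative is to argue by contradiction: if the claim fails, there is a sequence $\eps_{k}\to 0$ and corresponding self-similar profiles $\pr[\eps_{k}]$ with $\abs{\kappa[\pr[\eps_{k}]]}\geq \nu_{0}>0$ for all $k$, i.e.\@ $\bigl|\int_{0}^{\infty}\pr[\eps_{k}](z)\dz - 1\bigr|\geq \nu_{0}/2$.

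The key step is then to invoke the results already established in Section~\ref{Sec:previous:results}. In the proof of Proposition~\ref{Prop:weak:convergence:NEW} it is shown, as a consequence of Lemma~\ref{Lem:convergence:Laplace} and Lebesgue's theorem (passing $p\to\infty$), that $\lim_{k\to\infty}\int_{0}^{\infty}(\pr[\eps_{k}](x)-\ee^{-x})\dx = 0$, which is exactly $\int_{0}^{\infty}\pr[\eps_{k}](z)\dz \to 1$. This directly contradicts the lower bound $\nu_{0}/2$, proving the claim. Concretely, one takes $\nu_{\eps}\vcc=\sup\{\abs{\kappa[\pr]} : \pr \text{ a self-similar profile for the given }\eps\}$, which is finite for small $\eps$ by Lemma~\ref{Lem:moments} (uniform boundedness of $\int_{0}^{\infty}\pr\dx$), and the contradiction argument above shows $\nu_{\eps}\to 0$.

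There is essentially no serious obstacle here: the lemma is a repackaging of the convergence already extracted inside the proof of Proposition~\ref{Prop:weak:convergence:NEW}, combined with the uniform moment bounds of Lemma~\ref{Lem:moments}. The only point requiring a little care is the \emph{uniformity} over all self-similar profiles for a fixed $\eps$ — but this is handled automatically by stating everything through the supremum $\nu_{\eps}$ and noting that every ingredient (Lemma~\ref{Lem:convergence:Laplace}, Lemma~\ref{Lem:moments}) is already uniform in the choice of profile. Alternatively, one could avoid the contradiction and argue directly: writing $\abs{\kappa[\pr]} = 2\bigl|\int_{0}^{\infty}(\pr(z)-\ee^{-z})\dz\bigr|$ and bounding the right-hand side using Lemma~\ref{Lem:convergence:Laplace} together with the $p\to\infty$ limit, one obtains a bound $\nu_{\eps}$ depending only on $\eps$ that vanishes as $\eps\to 0$.
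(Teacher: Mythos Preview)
Your argument is correct, but it takes a different route from the paper. The paper's proof is a single line: it bounds $\abs{\kappa[\pr]}\leq 2\norm{\pr-\ee^{-\cdot}}_{L^{1}(0,\infty)}$ and then invokes Proposition~\ref{Prop:L1:convergence:NEW}, the full $L^1$-stability result. You instead reach back only to the material in Section~\ref{Sec:previous:results}, extracting the convergence $\int_{0}^{\infty}\pr[\eps_{k}]\to 1$ directly from Lemma~\ref{Lem:convergence:Laplace} via the $p\to\infty$ argument used inside the proof of Proposition~\ref{Prop:weak:convergence:NEW}. Your approach is more self-contained in that it does not rely on the pointwise convergence of Section~\ref{Sec:pointwise:convergence} or the dominated-convergence machinery behind Proposition~\ref{Prop:L1:convergence:NEW}; the paper's approach is shorter on the page because that work has already been done. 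Either way the uniformity over profiles is inherited from the cited lemmas, as you note.
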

 
 \begin{proof}
  Observing $\abs{\kappa[\pr]}=\abs*{\beta_{2}[\pr]-2}=2\abs*{\int_{0}^{\infty}\pr(x)-\ee^{-x}\dx}\leq 2\int_{0}^{\infty}\abs{\pr(x)-\ee^{-x}}\dx=2\norm{\pr-\ee^{-\cdot}}_{L^{1}(0,\infty)}$, the result is an immediate consequence of Proposition~\ref{Prop:L1:convergence:NEW}.
 \end{proof}

 The next lemma provides an interpolation statement for the $L^1$ norm of the difference of two self-similar profiles.
 
 \begin{lemma}\label{Lem:est:kappa}
  For each $\mu\in(0,1)$ there exists $C_{\mu}>0$ such that
  \begin{equation*}
   \int_{0}^{\infty}\abs{\dpr{1}(z)-\dpr{2}(z)}\dz \leq \mu\norm{\dpr{1}-\dpr{2}}_{\X{-\alpha}{\beta}}+C_{\mu}\norm*{\bigl(1-\exp(-\cdot)\bigr)(\dpr{1}-\dpr{2})}_{\X{-\alpha}{\beta}}
  \end{equation*}
  for each pair of solutions $\dpr{1}$ and $\dpr{2}$ to~\eqref{eq:selfsim}. In particular, we have
  \begin{equation*}
   \abs*{\kappa[\dpr{1}]-\kappa[\dpr{2}]}\leq \mu\norm{\dpr{1}-\dpr{2}}_{\X{-\alpha}{\beta}}+C_{\mu}\norm*{\bigl(1-\exp(-\cdot)\bigr)(\dpr{1}-\dpr{2})}_{\X{-\alpha}{\beta}}
  \end{equation*}
  with $\kappa$ given by~\eqref{eq:def:kappa}. 
 \end{lemma}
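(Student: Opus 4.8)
The plan is to prove the $L^1$ interpolation inequality first, and then derive the estimate on $\abs{\kappa[\dpr{1}]-\kappa[\dpr{2}]}$ as an immediate corollary, since $\kappa[\dpr{1}]-\kappa[\dpr{2}] = \beta_2[\dpr{1}] - \beta_2[\dpr{2}] = 2\int_0^\infty (\dpr{1}(z)-\dpr{2}(z))\dz$, so that $\abs{\kappa[\dpr{1}]-\kappa[\dpr{2}]} \leq 2\int_0^\infty \abs{\dpr{1}(z)-\dpr{2}(z)}\dz$ and the second claim follows from the first with $\mu$ and $C_\mu$ absorbed by a harmless factor of $2$.

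For the main inequality, I would split the integral $\int_0^\infty \abs{\dpr{1}(z)-\dpr{2}(z)}\dz$ according to whether $z$ is small or large, say at a cutoff $\rho \in (0,1)$ to be chosen. On $[\rho,\infty)$ the factor $1-\ee^{-z}$ is bounded below by $1-\ee^{-\rho} > 0$, so
\begin{equation*}
 \int_\rho^\infty \abs{\dpr{1}(z)-\dpr{2}(z)}\dz \leq \frac{1}{1-\ee^{-\rho}}\int_\rho^\infty (1-\ee^{-z})\abs{\dpr{1}(z)-\dpr{2}(z)}\dz \lesssim \frac{1}{1-\ee^{-\rho}}\norm{(1-\ee^{-\cdot})(\dpr{1}-\dpr{2})}_{\X{-\alpha}{\beta}},
\end{equation*}
using $\weight{-\alpha}{\beta}(z) \geq 1$ for $z\geq 1$ and $\weight{-\alpha}{\beta}(z) = z^{-\alpha} \geq \rho^{-\alpha}$, hence $\gtrsim 1$ up to a $\rho$-dependent constant, for $z\in[\rho,1]$; so the resulting constant $C_\mu$ will depend on $\rho$, which is fine. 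On $[0,\rho]$, I use instead that on this range $\weight{-\alpha}{\beta}(z) = z^{-\alpha}$ and $z \leq \rho$, so $1 = z^\alpha z^{-\alpha} \leq \rho^\alpha z^{-\alpha} = \rho^\alpha \weight{-\alpha}{\beta}(z)$, giving
\begin{equation*}
 \int_0^\rho \abs{\dpr{1}(z)-\dpr{2}(z)}\dz \leq \rho^\alpha \int_0^\rho \weight{-\alpha}{\beta}(z)\abs{\dpr{1}(z)-\dpr{2}(z)}\dz \leq \rho^\alpha \norm{\dpr{1}-\dpr{2}}_{\X{-\alpha}{\beta}}.
\end{equation*}
Combining the two pieces and choosing $\rho$ small enough that $\rho^\alpha \leq \mu$ yields exactly the stated bound with $C_\mu \vcc= C/(1-\ee^{-\rho})$ for the absolute constant $C$ from the embedding estimates.

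There is no real obstacle here; the argument is a routine splitting-plus-weight bookkeeping. The only point requiring a little care is confirming that the constant absorbed into $C_\mu$ on the low range $[\rho,1]$ (coming from $\weight{-\alpha}{\beta}(z) = z^{-\alpha}$ being as small as $\rho^{-\alpha}$, not bounded below by $1$) is indeed harmless — it is, because it is independent of $\eps$ and of the profiles, and $\rho$ has already been fixed in terms of $\mu$. I would also note explicitly that the inequality holds for \emph{every} pair of self-similar profiles uniformly in $\eps$, which is all that is needed later in the proof of Proposition~\ref{Prop:bound:layer:est}, and that no smallness of $\eps$ is required for this particular lemma (in contrast to \cref{Lem:smallness:kappa}).
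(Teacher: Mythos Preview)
Your proof is correct and follows essentially the same approach as the paper: split at a threshold (the paper writes it directly as $\mu^{1/\alpha}$), use $1 = z^{\alpha} z^{-\alpha} \leq \mu\, z^{-\alpha}$ on the small-$z$ piece, and insert the factor $(1-\ee^{-z})/(1-\ee^{-z})$ on the large-$z$ piece. One small cleanup: for $z\in[\rho,1]$ you actually have $z^{-\alpha}\geq 1$ directly (your inequality $z^{-\alpha}\geq \rho^{-\alpha}$ is reversed), so $\weight{-\alpha}{\beta}(z)\geq 1$ holds for \emph{all} $z>0$ and no $\rho$-dependent constant enters there---only the factor $1/(1-\ee^{-\rho})$ contributes to $C_{\mu}$.
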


 \begin{proof}
  We note that due to the definition of $\kappa$ in~\eqref{eq:def:kappa} we have
  \begin{equation*}
   \abs*{\kappa[\dpr{1}]-\kappa[\dpr{2}]}=2\abs*{\int_{0}^{\infty}\dpr{1}(z)-\dpr{2}(z)\dz}\leq 2\int_{0}^{\infty}\abs*{\dpr{1}(z)-\dpr{2}(z)}\dz.
  \end{equation*}
 Thus, it suffices to prove only the first part of the lemma. For given $\mu>0$ we split the integral and rewrite to get
 \begin{multline*}
  \int_{0}^{\infty}\abs*{\dpr{1}(z)-\dpr{2}(z)}\dz\\*
  =\int_{0}^{\mu^{1/\alpha}}z^{\alpha}z^{-\alpha}\abs*{\dpr{1}(z)-\dpr{2}(z)}\dz+\int_{\mu^{1/\alpha}}^{\infty}\frac{1}{1-\ee^{-z}}(1-\ee^{-z})\abs*{\dpr{1}(z)-\dpr{2}(z)}\dz\\*
  \leq \mu \int_{0}^{1}z^{-\alpha}\abs*{\dpr{1}(z)-\dpr{2}(z)}\dz+\frac{1}{1-\ee^{-\mu^{1/\alpha}}}\int_{0}^{\infty}(1-\ee^{-z})\abs*{\dpr{1}(z)-\dpr{2}(z)}\dz.
 \end{multline*}
 Since $1\leq \weight{-\alpha}{\beta}(z)$ we can estimate the right-hand side which further yields
 \begin{multline*}
  \int_{0}^{\infty}\abs*{\dpr{1}(z)-\dpr{2}(z)}\dz\\*
  \leq \mu \int_{0}^{1}\abs*{\dpr{1}(z)-\dpr{2}(z)}\weight{-\alpha}{\beta}(z)\dz+\frac{1}{1-\ee^{-\mu^{1/\alpha}}}\int_{0}^{\infty}(1-\ee^{-z})\abs*{\dpr{1}(z)-\dpr{2}(z)}\weight{-\alpha}{\beta}(z)\dz\\*
  =\mu\norm{\dpr{1}-\dpr{2}}_{\X{-\alpha}{\beta}}+C_{\mu}\norm*{\bigl(1-\exp(-\cdot)\bigr)(\dpr{1}-\dpr{2})}_{\X{-\alpha}{\beta}}.
 \end{multline*}
 This finishes the proof.
 \end{proof}
 
 \begin{remark}\label{Rem:diff:kappa}
  Note that Lemma~\ref{Lem:est:kappa} together with Lemma~\ref{Lem:norm:regularising} in particular gives
  \begin{equation*}
   \abs{\kappa[\dpr{1}]-\kappa[\dpr{2}]}\lesssim \norm{\dpr{1}-\dpr{2}}_{\X{-\alpha}{\beta}}
  \end{equation*}
 for each pair of self-similar profiles $\dpr{1}$ and $\dpr{2}$. We also note that the first part of Lemma~\ref{Lem:est:kappa} holds true for each pair $g,h\in\X{-\alpha}{\beta}$ instead of $\dpr{1}$ and $\dpr{2}$.
 \end{remark}
 
 The next two lemmas provide estimates on the functional $\beta_{W}$ applied to self-similar profiles.
 
 \begin{lemma}\label{Lem:est:beta:W}
  The functional $\beta_{W}$ as given by~\eqref{eq:def:beta} satisfies the estimate
  \begin{equation*}
   \abs{\beta_{W}[\pr](x)}\lesssim \weight{-\alpha}{\alpha}(x)\qquad \text{for }x>0
  \end{equation*}
 and all self-similar profiles $\pr$ if $\eps$ is sufficiently small.
 \end{lemma}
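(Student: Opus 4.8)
The plan is to estimate $\beta_W[\pr](x) = \int_0^\infty W(x,z)\pr(z)\,\dz$ directly from the bound on $W$ in~\eqref{eq:Ass:K2} together with the uniform moment control of self-similar profiles provided by Lemma~\ref{Lem:moments}. First I would use~\eqref{eq:pert:est:weight} to write $W(x,z) \lesssim \weight{-\alpha}{\alpha}(x)\weight{-\alpha}{\alpha}(z)$, so that
\begin{equation*}
 \abs{\beta_W[\pr](x)} \lesssim \weight{-\alpha}{\alpha}(x)\int_0^\infty \weight{-\alpha}{\alpha}(z)\pr(z)\,\dz.
\end{equation*}
The remaining integral is exactly a weighted moment of $\pr$ of the type controlled by Lemma~\ref{Lem:moments}: since the exponent near zero is $-\alpha > -1$ (as $\alpha \in (0,1)$), that lemma gives $\int_0^\infty \weight{-\alpha}{\alpha}(z)\pr(z)\,\dz \leq C_{-\alpha,\alpha}$ uniformly in $\eps$ (for $\eps$ sufficiently small) and in the profile $\pr$. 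Combining the two estimates yields $\abs{\beta_W[\pr](x)} \lesssim \weight{-\alpha}{\alpha}(x)$, which is the claim.

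I do not expect any serious obstacle here: the statement is essentially an immediate consequence of the pointwise bound on $W$ and the already-established uniform moment bound, and the only point to be careful about is that the weight exponent $-\alpha$ at the origin lies in the admissible range $(-1,\infty)$ required by Lemma~\ref{Lem:moments}, which holds precisely because $\alpha < 1$. The smallness condition on $\eps$ enters only through the hypotheses of Lemma~\ref{Lem:moments}.
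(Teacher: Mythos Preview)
Your proposal is correct and follows exactly the same approach as the paper: bound $W(x,z)$ via~\eqref{eq:pert:est:weight}, factor out $\weight{-\alpha}{\alpha}(x)$, and control the remaining integral $\int_0^\infty \weight{-\alpha}{\alpha}(z)\pr(z)\,\dz$ by Lemma~\ref{Lem:moments}. Your explicit remark that $-\alpha>-1$ is what makes Lemma~\ref{Lem:moments} applicable is a useful clarification that the paper leaves implicit.
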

 
 \begin{proof}
  From the definition of $\beta_{W}$ in~\eqref{eq:def:beta} together with~\eqref{eq:pert:est:weight} we deduce
  \begin{equation*}
   \abs{\beta_{W}[\pr](x)}=\abs*{\int_{0}^{\infty}W(x,z)\pr(z)\dz}\lesssim \weight{-\alpha}{\alpha}(x)\int_{0}^{\infty}\weight{-\alpha}{\alpha}(z)\pr(z)\dz\lesssim \weight{-\alpha}{\alpha}(x).
  \end{equation*}
 In the last step we exploited the uniform boundedness of the integral which is guaranteed by Lemma~\ref{Lem:moments}.
 \end{proof}

 \begin{lemma}\label{Lem:est:diff:beta:W}
  For $\beta_{W}$ as defined in~\eqref{eq:def:beta} we have the estimate
  \begin{equation*}
   \abs{\beta_{W}[\dpr{1}](x)-\beta_{W}[\dpr{2}](x)}\lesssim \weight{-\alpha}{\alpha}(x)\norm{\dpr{1}-\dpr{2}}_{\X{-\alpha}{\beta}} \qquad \text{for all }x>0
  \end{equation*}
  and for all pairs of self-similar profiles $\dpr{1}$ and $\dpr{2}$ with sufficiently small $\eps$.
 \end{lemma}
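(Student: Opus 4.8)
\textbf{Proof proposal for Lemma~\ref{Lem:est:diff:beta:W}.}

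The plan is to proceed exactly as in the proof of Lemma~\ref{Lem:est:beta:W}, but now applied to the difference $\dpr{1}-\dpr{2}$ instead of a single profile. From the definition of $\beta_{W}$ in~\eqref{eq:def:beta} and the bilinearity of the integral in the last argument, I would write
\begin{equation*}
 \beta_{W}[\dpr{1}](x)-\beta_{W}[\dpr{2}](x)=\int_{0}^{\infty}W(x,z)\bigl(\dpr{1}(z)-\dpr{2}(z)\bigr)\dz,
\end{equation*}
so that, taking absolute values and using the pointwise bound $W(x,z)\lesssim \weight{-\alpha}{\alpha}(x)\weight{-\alpha}{\alpha}(z)$ from~\eqref{eq:pert:est:weight}, one obtains
\begin{equation*}
 \abs{\beta_{W}[\dpr{1}](x)-\beta_{W}[\dpr{2}](x)}\lesssim \weight{-\alpha}{\alpha}(x)\int_{0}^{\infty}\weight{-\alpha}{\alpha}(z)\abs{\dpr{1}(z)-\dpr{2}(z)}\dz.
\end{equation*}

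It then remains to identify the remaining integral as (a multiple of) $\norm{\dpr{1}-\dpr{2}}_{\X{-\alpha}{\beta}}$. The key observation is that since $\alpha\in(0,1)$ and $\beta\in(1+\alpha,2)$ we have $-\alpha\le -\alpha$ (trivially, giving equality of the near-zero exponent) and $\alpha\le\beta$, hence $\weight{-\alpha}{\alpha}(z)\le\weight{-\alpha}{\beta}(z)$ by the monotonicity property~\eqref{eq:weight:monotonicity}. Therefore
\begin{equation*}
 \int_{0}^{\infty}\weight{-\alpha}{\alpha}(z)\abs{\dpr{1}(z)-\dpr{2}(z)}\dz\le \int_{0}^{\infty}\weight{-\alpha}{\beta}(z)\abs{\dpr{1}(z)-\dpr{2}(z)}\dz=\norm{\dpr{1}-\dpr{2}}_{\X{-\alpha}{\beta}},
\end{equation*}
which yields precisely the claimed estimate. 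Implicitly one uses here that both $\dpr{1}$ and $\dpr{2}$ lie in $\X{-\alpha}{\beta}$ for $\eps$ sufficiently small, which is guaranteed by Lemma~\ref{Lem:moments} (so the right-hand side is finite and the manipulations are legitimate).

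I do not anticipate a serious obstacle here: this is a one-line consequence of the linearity of $\beta_{W}$ in its argument combined with the weight bound~\eqref{eq:pert:est:weight} and the embedding~\eqref{eq:weight:monotonicity}. The only point requiring a moment's care is checking the inequality $\weight{-\alpha}{\alpha}\le\weight{-\alpha}{\beta}$, i.e.\@ that the choice of $\beta$ in~\eqref{eq:choice:beta:parameter} indeed dominates $\alpha$ at infinity, which is immediate since $\beta>1+\alpha>\alpha$.
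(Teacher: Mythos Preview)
Your proof is correct and follows essentially the same approach as the paper: write the difference as a single integral, apply the pointwise bound~\eqref{eq:pert:est:weight}, and then pass from the weight $\weight{-\alpha}{\alpha}$ to $\weight{-\alpha}{\beta}$ using $\beta>\alpha$. The only cosmetic difference is that the paper first identifies the integral as $\norm{\dpr{1}-\dpr{2}}_{\X{-\alpha}{\alpha}}$ and then invokes the embedding~\eqref{eq:spaces:embedding}, whereas you apply the weight monotonicity~\eqref{eq:weight:monotonicity} directly inside the integral; these are the same step.
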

 
 \begin{proof}
  The estimate follows immediately from the definition and~\eqref{eq:pert:est:weight}. In fact, we have
  \begin{multline*}
   \abs{\beta_{W}[\dpr{1}](x)-\beta_{W}[\dpr{2}](x)}=\abs*{\int_{0}^{\infty}W(x,z)\bigl(\dpr{1}(z)-\dpr{2}(z)\bigr)\dz}\\*
   \lesssim \weight{-\alpha}{\alpha}(x)\int_{0}^{\infty}\weight{-\alpha}{\alpha}(z)\abs{\dpr{1}(z)-\dpr{2}(z)}\dz=\weight{-\alpha}{\alpha}(x)\norm{\dpr{1}-\dpr{2}}_{\X{-\alpha}{\alpha}}.
  \end{multline*}
 The claim thus follows from~\eqref{eq:spaces:embedding} together with $\beta>\alpha$.
 \end{proof}
 
 Next, we prove that the difference of the functional $\Phi$ applied to two self-similar solutions can be estimated by the norm of the difference of the profiles.
 
 \begin{lemma}\label{Lem:est:diff:Phi}
  For $\Phi$ as defined in~\eqref{eq:def:Phi} we have the estimate
  \begin{equation*}
   \abs{\Phi[\dpr{1}](x)-\Phi[\dpr{2}](x)}\lesssim \eps \weight{-\alpha}{\alpha-1}(x)\ee^{-x}\norm{\dpr{1}-\dpr{2}}_{\X{-\alpha}{\beta}}\qquad \text{for all }x>0
  \end{equation*}
 and each pair of self-similar profiles $\dpr{1}$ and $\dpr{2}$ with $\eps$ sufficiently small. In particular, we also have the bound $\abs{\Phi[\dpr{1}](x)-\Phi[\dpr{2}](x)}\lesssim \eps \weight{-\alpha}{\alpha-1}(x)\norm{\dpr{1}-\dpr{2}}_{\X{-\alpha}{\beta}}$.
 \end{lemma}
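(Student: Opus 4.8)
The plan is to write $\Phi[\dpr{1}]-\Phi[\dpr{2}]$ as a single integral, insert the pointwise bound on $\beta_{W}[\dpr{1}]-\beta_{W}[\dpr{2}]$ from Lemma~\ref{Lem:est:diff:beta:W}, and then reduce everything to the elementary scalar estimate
\[
 \int_{x}^{\infty}\weight{-\alpha-1}{\alpha-1}(y)\ee^{-y}\,\dy\lesssim \weight{-\alpha}{\alpha-1}(x)\ee^{-x}.
\]
More precisely, since $\beta_{W}[\cdot](y)=\int_{0}^{\infty}W(y,z)\,(\cdot)(z)\,\dz$ is linear in its argument, the definition~\eqref{eq:def:Phi} gives
\[
 \abs{\Phi[\dpr{1}](x)-\Phi[\dpr{2}](x)}\leq \eps\int_{x}^{\infty}\frac{\abs{\beta_{W}[\dpr{1}](y)-\beta_{W}[\dpr{2}](y)}}{y}\,\ee^{-y}\,\dy,
\]
and applying Lemma~\ref{Lem:est:diff:beta:W} together with~\eqref{eq:weight:shift} (which yields $\weight{-\alpha}{\alpha}(y)/y=\weight{-\alpha-1}{\alpha-1}(y)$) reduces the estimate to the displayed scalar integral, with the factor $\norm{\dpr{1}-\dpr{2}}_{\X{-\alpha}{\beta}}$ pulled out.

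For the scalar estimate I would split into the regimes $x\geq 1$ and $x\leq 1$. If $x\geq 1$, then $\weight{-\alpha-1}{\alpha-1}(y)=y^{\alpha-1}$ for $y\geq x$, and since $\alpha<1$ the function $y\mapsto y^{\alpha-1}$ is decreasing, so $\int_{x}^{\infty}y^{\alpha-1}\ee^{-y}\,\dy\leq x^{\alpha-1}\int_{x}^{\infty}\ee^{-y}\,\dy=x^{\alpha-1}\ee^{-x}=\weight{-\alpha}{\alpha-1}(x)\ee^{-x}$. If $x\leq 1$, I would split the integral at $1$: the tail $\int_{1}^{\infty}y^{\alpha-1}\ee^{-y}\,\dy$ is a finite constant, while $\int_{x}^{1}y^{-\alpha-1}\ee^{-y}\,\dy\leq \int_{x}^{1}y^{-\alpha-1}\,\dy\lesssim x^{-\alpha}$ because $-\alpha-1<-1$; combining and using that on $[0,1]$ one has both $x^{-\alpha}\geq 1$ and $\ee^{-x}\geq \ee^{-1}$ gives the bound $\lesssim x^{-\alpha}\ee^{-x}=\weight{-\alpha}{\alpha-1}(x)\ee^{-x}$. (Alternatively one may simply invoke the elementary primitive estimates for the weight against $\ee^{-y}$ collected in the appendix.) This establishes the first displayed bound of the lemma, and the "in particular" statement is then immediate from $\ee^{-x}\leq 1$.

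I do not expect a genuine obstacle here: the lemma is essentially a corollary of Lemma~\ref{Lem:est:diff:beta:W} once the one-variable integral is carried out. The only point requiring a little care is bookkeeping the two regimes $x\lessgtr 1$ so that the weight on the right-hand side comes out with exactly the exponents $-\alpha$ (near zero) and $\alpha-1$ (at infinity), and keeping track of $\alpha<1$ so that the exponent $\alpha-1$ is negative (used for monotonicity when $x\geq 1$) while $-\alpha-1<-1$ ensures integrability near the lower endpoint when $x\leq 1$.
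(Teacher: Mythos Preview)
Your proposal is correct and follows essentially the same route as the paper: both use the definition of $\Phi$, apply Lemma~\ref{Lem:est:diff:beta:W} together with~\eqref{eq:weight:shift} to reduce to the scalar integral $\int_{x}^{\infty}\weight{-\alpha-1}{\alpha-1}(y)\ee^{-y}\,\dy$, and then bound the latter by $\weight{-\alpha}{\alpha-1}(x)\ee^{-x}$. The only cosmetic difference is that the paper simply cites the appendix estimate~\eqref{eq:prim:weight:3} for this last step, whereas you spell out the case split $x\lessgtr 1$ (and then note the appendix alternative yourself).
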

 
 \begin{proof}
  Using Lemma~\ref{Lem:est:diff:beta:W} together with the definition of $\Phi$ and~\eqref{eq:weight:shift} we get
  \begin{multline*}
   \abs{\Phi[\dpr{1}](x)-\Phi[\dpr{2}](x)}=\eps\abs*{\int_{x}^{\infty}\frac{(\beta_{W}[\dpr{1}](y)-\beta_{W}[\dpr{2}](y)}{y}\ee^{-y}\dy}\\*
   \lesssim \eps\norm{\dpr{1}-\dpr{2}}_{\X{-\alpha}{\beta}}\int_{x}^{\infty}\frac{\weight{-\alpha}{\alpha}(y)}{y}\ee^{-y}\dy=\eps\norm{\dpr{1}-\dpr{2}}_{\X{-\alpha}{\beta}}\int_{x}^{\infty}\weight{-\alpha-1}{\alpha-1}(y)\ee^{-y}\dy.
  \end{multline*}
 Recalling from~\eqref{eq:prim:weight:3} that $\int_{x}^{\infty}\weight{-\alpha-1}{\alpha-1}(y)\ee^{-y}\dy\lesssim \weight{-\alpha}{\alpha-1}(x)\ee^{-x}$ for all $x>0$ the claim follows.
 \end{proof}
 
 For $\alpha\geq 1/2$ we require also a lower bound on finite differences of the functional $\Phi$.

 \begin{lemma}\label{Lem:diff:Phi:lower:bound}
  Assume that $\alpha\in[1/2,1)$ and let $W$ satisfy additionally~\eqref{eq:W:lower:bound}. For sufficiently small $\eps>0$ there exists a constant $c_{\alpha}>0$ such that 
  \begin{equation*}
   \Phi[\pr](x)-\Phi[\pr](x+\tau)\geq  c_{\alpha}\eps\begin{cases}
                                                   x^{-\alpha} &\text{if }\tau\geq x\\
                                                   x^{-1-\alpha}\tau & \text{if }\tau\leq x
                                                  \end{cases}
  \qquad \text{for all }x\leq 1
  \end{equation*}
 and all self-similar profiles $\pr$ where $\Phi[\pr]$ is defined in~\eqref{eq:def:Phi}.
 \end{lemma}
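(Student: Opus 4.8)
The plan is to work directly from the definition~\eqref{eq:def:Phi} of $\Phi$, which gives
\[
 \Phi[\pr](x)-\Phi[\pr](x+\tau)=\eps\int_{x}^{x+\tau}\frac{\beta_{W}[\pr](y)}{y}\ee^{-y}\dy ,
\]
and then to bound the integrand from below on the relevant range of $y$. The key ingredient is a uniform pointwise lower bound $\beta_{W}[\pr](y)\gtrsim y^{-\alpha}$. To obtain it, I would use the lower bound~\eqref{eq:W:lower:bound}, which yields $W(y,z)\geq c_{*}(z/y)^{\alpha}$ and hence, by~\eqref{eq:def:beta}, $\beta_{W}[\pr](y)\geq c_{*}\,y^{-\alpha}\int_{0}^{\infty}z^{\alpha}\pr(z)\dz$. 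It then remains to bound $\int_{0}^{\infty}z^{\alpha}\pr(z)\dz$ from below, uniformly over all self-similar profiles and all sufficiently small $\eps$. Since $\alpha\in(0,1)$, Theorem~\ref{Thm:closeness:profiles:NEW} applied with the weight $\weight{\alpha}{\alpha}$ gives $\norm{\pr-\ee^{-\cdot}}_{\X{\alpha}{\alpha}}\to0$ as $\eps\to0$, so $\int_{0}^{\infty}z^{\alpha}\pr(z)\dz\to\int_{0}^{\infty}z^{\alpha}\ee^{-z}\dz=\Gamma(\alpha+1)>0$; thus for $\eps$ small we may assume $\int_{0}^{\infty}z^{\alpha}\pr(z)\dz\geq\tfrac12\Gamma(\alpha+1)$, and therefore $\beta_{W}[\pr](y)\geq c_{0}\,y^{-\alpha}$ for all $y>0$ with a constant $c_{0}>0$ independent of $\pr$ and of small $\eps$.

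With this bound the difference becomes $\Phi[\pr](x)-\Phi[\pr](x+\tau)\geq \eps c_{0}\int_{x}^{x+\tau}y^{-1-\alpha}\ee^{-y}\dy$, and the conclusion follows by an elementary estimate of this integral, using crucially that $x\leq1$ keeps $y$ in $(0,2]$ where $\ee^{-y}\geq\ee^{-2}$. In the regime $\tau\geq x$ I would restrict the integration to the subinterval $[x,2x]\subset[x,x+\tau]$ and compute $\int_{x}^{2x}y^{-1-\alpha}\dy=\alpha^{-1}(1-2^{-\alpha})x^{-\alpha}$, giving the bound $c_{\alpha}\eps x^{-\alpha}$. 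In the regime $\tau\leq x$ I would keep the full interval $[x,x+\tau]\subset(0,2]$ and use $\int_{x}^{x+\tau}y^{-1-\alpha}\dy=\alpha^{-1}\bigl(x^{-\alpha}-(x+\tau)^{-\alpha}\bigr)\geq(2x)^{-1-\alpha}\tau$, where the last step is the mean value theorem together with $x+\tau\leq2x$; this yields the bound $c_{\alpha}\eps x^{-1-\alpha}\tau$. Collecting the constants produces a single admissible choice, e.g.\ $c_{\alpha}=c_{0}\ee^{-2}\min\{\alpha^{-1}(1-2^{-\alpha}),\,2^{-1-\alpha}\}$ up to relabeling.

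The only step that is not purely routine is securing the uniform strict positivity of $\int_{0}^{\infty}z^{\alpha}\pr(z)\dz$, and even that is handed to us by the already-proven stability statement rather than requiring new work; everything else is manipulation of the weight $\weight{a}{b}$ and of the explicit integral $\int y^{-1-\alpha}\dy$. I therefore do not anticipate any real obstacle. It is perhaps worth remarking that, although the lemma is only invoked for $\alpha\geq1/2$, the argument above is valid verbatim for every $\alpha\in(0,1)$; the restriction $\alpha\geq1/2$ plays no role in the proof itself.
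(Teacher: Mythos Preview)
Your proof is correct and follows essentially the same route as the paper: both obtain the pointwise lower bound $\beta_{W}[\pr](y)\geq c_{0}\,y^{-\alpha}$ via~\eqref{eq:W:lower:bound} together with Theorem~\ref{Thm:closeness:profiles:NEW}, and then estimate $\int_{x}^{x+\tau}y^{-1-\alpha}\ee^{-y}\dy$ from below in the two regimes. Your restriction of the integral to $[x,2x]$ in the case $\tau\geq x$ (so that $\ee^{-y}\geq\ee^{-2}$ is available on the whole range) is in fact a slight tidying-up of the paper's argument, which bounds $\ee^{-x}\geq\ee^{-1}$ at the level of the derivative and then integrates over all of $[x,x+\tau]$.
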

 
 \begin{proof}
  We first note that the non-negativity of $\pr$ together with~\cref{eq:W:lower:bound,eq:def:Phi} yields
  \begin{multline*}
   -\Phi'[\pr](x)=\eps \int_{0}^{\infty}\frac{W(x,z)}{x}\ee^{-x}\pr(z)\dz\\*
   \geq c_{*}\eps \int_{0}^{\infty}x^{-1-\alpha}z^{\alpha}\ee^{-x}\pr(z)\dz\geq \frac{c_{*}\eps}{\ee}\mom_{\alpha}[\pr]x^{-1-\alpha}.
  \end{multline*}
  Since $\abs{\mom_{\alpha}[\pr]-\mom_{\alpha}[\exp(-\cdot)]}\leq \norm{\pr-\exp(-\cdot)}_{\X{\alpha}{\alpha}}$ we deduce from Theorem~\ref{Thm:closeness:profiles:NEW} upon choosing $\eps>0$ sufficiently small that $\mom_{\alpha}[\pr]\geq \mom_{\alpha}[\exp(-\cdot)]/2$. Thus, we get $-\Phi'[\pr](x)\geq \frac{c_{*}\eps}{2\ee}\mom_{\alpha}[\exp(-\cdot)]x^{-1-\alpha}$ which further yields
 \begin{equation}\label{eq:proof:diff:Phi:1}
  \Phi[\pr](x)-\Phi[\pr](x+\tau)=\int_{x}^{x+\tau}-\Phi'[\pr](z)\dz\geq \frac{c_{*}\eps}{2\ee}\mom_{\alpha}[\exp(-\cdot)]\int_{x}^{x+\tau}z^{-1-\alpha}\dz.
 \end{equation}
 Now we have to distinguish whether $\tau\geq x$ or $\tau\leq x$. In the first case we compute the integral on the right-hand side and estimate further to get
  \begin{equation}\label{eq:proof:diff:Phi:2}
  \Phi[\pr](x)-\Phi[\pr](x+\tau)\geq \frac{c_{*}\eps}{2\ee\alpha}\mom_{\alpha}[\exp(-\cdot)]\bigl(x^{-\alpha}-(x+\tau)^{-\alpha}\bigr)\geq \frac{c_{*}\eps}{2\ee\alpha}\mom_{\alpha}[\exp(-\cdot)]\bigl(1-2^{-\alpha}\bigr)x^{-\alpha}.
 \end{equation}
 In the last step we used that $-(x+\tau)^{-\alpha}\geq -2^{-\alpha}x^{-\alpha}$ since $\tau\geq x$. On the other hand, if $\tau\leq x$, we estimate the right-hand side of~\eqref{eq:proof:diff:Phi:1} to get
 \begin{multline}\label{eq:proof:diff:Phi:3}
  \Phi[\pr](x)-\Phi[\pr](x+\tau)\geq \frac{c_{*}\eps}{2\ee}\mom_{\alpha}[\exp(-\cdot)]\geq \frac{c_{*}\eps}{2\ee}\mom_{\alpha}[\exp(-\cdot)](x+\tau)^{-\alpha-1}\tau\\*
  \geq \frac{c_{*}\eps}{2\ee} 2^{-1-\alpha}\mom_{\alpha}[\exp(-\cdot)]x^{-1-\alpha}\tau.
 \end{multline}
 In the last step, we exploited $\tau\leq x$ to estimate $(x+\tau)^{-1-\alpha}\geq 2^{-1-\alpha}x^{-1-\alpha}$. Summarising~\cref{eq:proof:diff:Phi:2,eq:proof:diff:Phi:3} and choosing $c_{\alpha}\vcc=(c_{*}/(2\ee))\min\{2^{-1-\alpha},(1-2^{-\alpha})/\alpha\}$ the claim follows.
 \end{proof}
 
 The next statement quantifies the regularising effect of the exponential decay of self-similar profiles close to zero.
 
 \begin{lemma}\label{Lem:regularising:effect}
  Assume $\alpha\geq 1/2$ and that $W$ satisfies~\eqref{eq:W:lower:bound}. If $a\in(0,1+\alpha)$ and $\delta\in(0,\min\{1+\alpha-a,\alpha\})$ we have for sufficiently small $\eps>0$ and all $b\in\R$ that 
  \begin{equation*}
   \int_{0}^{z}\weight{-a}{b}(x)\ee^{-x}\ee^{-(\Phi[\pr](x)-\Phi[\pr](z))}\dx \lesssim \eps^{\frac{\delta-\alpha}{\alpha}}\weight{1+\alpha-a-\delta}{0}(z)
  \end{equation*}
 for all self-similar solutions $\pr$.
 \end{lemma}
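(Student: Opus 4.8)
The plan is to reduce everything to the range $z\le 1$ and there to split $\int_0^z$ at the midpoint $z/2$, feeding the two cases of \cref{Lem:diff:Phi:lower:bound} into the two pieces. First the reduction: since the integrand in~\eqref{eq:def:Phi} is non-negative ($W\ge0$ by~\eqref{eq:W:lower:bound}), $\Phi[\pr]$ is non-increasing on $(0,\infty)$, so $\ee^{-(\Phi[\pr](x)-\Phi[\pr](z))}\le1$ whenever $x\le z$. For $z>1$ split $\int_0^z=\int_0^1+\int_1^z$; on $[1,z]$ one has $\weight{-a}{b}(x)=x^b$, so that piece is $\le\int_1^\infty x^b\ee^{-x}\dx\lesssim1=\weight{1+\alpha-a-\delta}{0}(z)$, while on $[0,1]$ the bound $\ee^{-(\Phi[\pr](x)-\Phi[\pr](z))}\le\ee^{-(\Phi[\pr](x)-\Phi[\pr](1))}$ reduces matters to $z=1$. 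Hence it suffices to treat $z\le1$, where $\weight{-a}{b}(x)=x^{-a}$ and $\ee^{-x}\le1$ on $(0,z)$, so the task is to show
\[
 \int_0^z x^{-a}\,\ee^{-(\Phi[\pr](x)-\Phi[\pr](z))}\dx\lesssim \eps^{\frac{\delta-\alpha}{\alpha}}\,z^{1+\alpha-a-\delta}.
\]

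For this I would split $\int_0^z=\int_0^{z/2}+\int_{z/2}^z$. On the near-diagonal piece $[z/2,z]$, putting $\tau=z-x\in[0,z/2]$ one has $0\le\tau\le x\le1$, so \cref{Lem:diff:Phi:lower:bound} gives $\Phi[\pr](x)-\Phi[\pr](z)\ge c_\alpha\eps\,x^{-1-\alpha}(z-x)\ge c_\alpha\eps\,z^{-1-\alpha}(z-x)$; combined with $x^{-a}\lesssim z^{-a}$ this piece is $\lesssim z^{-a}\int_0^{z/2}\ee^{-c_\alpha\eps z^{-1-\alpha}\tau}\dd{\tau}\lesssim\eps^{-1}z^{1+\alpha-a}$, and together with the trivial bound $\lesssim z^{1-a}$ a case split according to whether $z\lessgtr\eps^{1/\alpha}$ turns $\min\{z^{1-a},\eps^{-1}z^{1+\alpha-a}\}$ into $\eps^{(\delta-\alpha)/\alpha}z^{1+\alpha-a-\delta}$ (this is where $0<\delta<\alpha$ is used). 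On the outer piece $[0,z/2]$ one has $\tau=z-x\ge z/2\ge x$, so the lemma gives $\Phi[\pr](x)-\Phi[\pr](z)\ge c_\alpha\eps\,x^{-\alpha}$; after the substitution $x=\eps^{1/\alpha}\sigma$ this piece equals $\eps^{(1-a)/\alpha}\int_0^{\eps^{-1/\alpha}z/2}\sigma^{-a}\ee^{-c_\alpha\sigma^{-\alpha}}\dd{\sigma}$, and since $\sigma\mapsto\sigma^{-a}\ee^{-c_\alpha\sigma^{-\alpha}}$ decays faster than any power of $\sigma$ as $\sigma\to0$ and like $\sigma^{-a}$ as $\sigma\to\infty$, a short case analysis in $a$ (namely $a<1$, $a=1$, $a>1$) and in $z$ (again $z\lessgtr\eps^{1/\alpha}$) shows that this is $\lesssim\eps^{(\delta-\alpha)/\alpha}z^{1+\alpha-a-\delta}$ as well; here the hypothesis $\delta<1+\alpha-a$ enters, keeping the exponent $1+\alpha-a-\delta$ positive. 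Adding the two pieces (and recalling the reduction above) proves the claim.

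Conceptually there is essentially nothing to do beyond invoking \cref{Lem:diff:Phi:lower:bound}: the argument then collapses to a handful of elementary one–dimensional integrals. The only genuine effort is the bookkeeping of the exponents across the regimes $z\lessgtr\eps^{1/\alpha}$ and $a\lessgtr1$ — verifying, with the help of $0<\delta<\min\{1+\alpha-a,\alpha\}$, that all the resulting powers of $\eps$ and $z$ recombine into the stated bound. I expect this purely computational matching of exponents to be the most laborious (though entirely routine) part of the proof.
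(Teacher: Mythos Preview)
Your proposal is correct and follows the same strategy as the paper (reduce to $z\le1$, split at $z/2$, apply the two cases of \cref{Lem:diff:Phi:lower:bound}). The paper sidesteps your case analysis with two cleaner pointwise bounds: on $[0,z/2]$ it factors $x^{-a}=x^{\alpha-a-\delta}\cdot x^{\delta-\alpha}$ and uses that $x\mapsto x^{\delta-\alpha}\ee^{-c_\alpha\eps x^{-\alpha}}$ has global maximum $\lesssim\eps^{(\delta-\alpha)/\alpha}$, then integrates $x^{\alpha-a-\delta}$ directly; on $[z/2,z]$ it evaluates the exponential integral and applies $1-\ee^{-A}\lesssim A^{\delta/\alpha}$ --- both tricks avoid the split into $z\lessgtr\eps^{1/\alpha}$ and the separate treatment of $a\lessgtr1$.
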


 \begin{proof}
  We first consider the case $z\leq 1$. Splitting the integral, using $\ee^{-x}\leq 1$ and recalling Lemma~\ref{Lem:diff:Phi:lower:bound} we get
  \begin{equation}\label{eq:proof:reg:effect:1}
   \int_{0}^{z}\weight{-a}{b}(x)\ee^{-x}\ee^{-(\Phi[\pr](x)-\Phi[\pr](z)}\dx\leq \int_{0}^{z/2}x^{-a}\ee^{-c_{\alpha}\eps x^{-\alpha}}\dx+\int_{z/2}^{z}x^{-a}\ee^{-c_{\alpha}\eps x^{-\alpha-1}(z-x)}\dx.
  \end{equation}
 We estimate the two terms on the right-hand side separately. For this, we first rewrite $x^{-a}=x^{\alpha-a-\delta}x^{\delta-\alpha}$ and note that the function $x\mapsto x^{\delta-\alpha}\ee^{-c_{\alpha}\eps x^{-\alpha}}$ attains its unique maximum at $x_{*}=((\alpha-\delta)/(c_{\alpha}\alpha\eps))^{-1/\alpha}$. Thus, we have the bound
 \begin{multline*}
  x^{-a}\ee^{-c_{\alpha}\eps x^{-\alpha}}\leq x^{\alpha-a-\delta}x^{\delta-\alpha}\ee^{-c_{\alpha}\eps x^{-\alpha}}\leq x_{*}^{\delta-\alpha}\ee^{-c_{\alpha}\eps x_{*}^{-\alpha}}x^{\alpha-a-\delta}\\*
  \leq \Bigl(\frac{c_{\alpha}\alpha}{\alpha-\delta}\Bigr)^{\frac{\alpha-\delta}{\alpha}}\ee^{-\frac{\alpha-\delta}{\alpha}}\eps^{-\frac{\alpha-\delta}{\alpha}}x^{\alpha-a-\delta}\lesssim\eps^{-\frac{\alpha-\delta}{\alpha}}x^{\alpha-a-\delta}.
 \end{multline*}
 From this, we get for the first integral on the right-hand side of~\eqref{eq:proof:reg:effect:1} that
 \begin{equation}\label{eq:proof:reg:effect:2}
  \int_{0}^{z/2}x^{-a}\ee^{-c_{\alpha}\eps x^{-\alpha}}\dx\lesssim \eps^{-\frac{\alpha-\delta}{\alpha}}\int_{0}^{z/2}x^{\alpha-a-\delta}\dx\lesssim \eps^{-\frac{\alpha-\delta}{\alpha}}z^{1+\alpha-a-\delta}.
 \end{equation}
 To estimate the second integral on the right-hand side of~\eqref{eq:proof:reg:effect:1} we first change variables $x\mapsto zx$ and then use $x^{-a}\leq 2^a$ and $-x^{-1-\alpha}\leq -1$ for $x\in[1/2,1]$ which yields
 \begin{multline*}
  \int_{z/2}^{z}x^{-a}\ee^{-c_{\alpha}\eps x^{-\alpha-1}(z-x)}\dx=z^{1-a}\int_{1/2}^{1}x^{-a}\ee^{-c_{\alpha}\eps z^{-\alpha}x^{-1-\alpha}(1-x)}\dx\\*
  \leq 2^{a} z^{1-a}\int_{1/2}^{1}\ee^{-c_{\alpha}\eps z^{-\alpha}(1-x)}\dx.
 \end{multline*}
The last integral on the right-hand side can be computed explicitly such that we get
 \begin{equation*}
  \int_{z/2}^{z}x^{-a}\ee^{-c_{\alpha}\eps x^{-\alpha-1}(z-x)}\dx\leq 2^{a}z^{1-a}\bigl(c_{\alpha}\eps z^{-\alpha}\bigr)^{-1}\bigl(1-\ee^{-\frac{c_{\alpha}\eps z^{-\alpha}}{2}}\bigr).
 \end{equation*}
 Exploiting that $1-\ee^{-A x}\lesssim (Ax)^{\gamma}$ for each $\gamma\in[0,1]$, we conclude with $\gamma=\delta/\alpha$ that
  \begin{equation}\label{eq:proof:reg:effect:3}
  \int_{z/2}^{z}x^{-a}\ee^{-c_{\alpha}\eps x^{-\alpha-1}(z-x)}\dx\leq 2^{a}z^{1-a}\bigl(c_{\alpha}\eps z^{-\alpha}\bigr)^{\frac{\delta}{\alpha}-1}\lesssim \eps^{-\frac{\alpha-\delta}{\alpha}}z^{1-a+\alpha-\delta}.
 \end{equation}
Combining~\cref{eq:proof:reg:effect:1,eq:proof:reg:effect:2,eq:proof:reg:effect:3} the claim follows for $z\leq 1$.

For $z\geq 1$, we now exploit the monotonicity of $\Phi[\pr](\cdot)$ which yields $\ee^{-(\Phi[\pr](x)-\Phi[\pr](z))}\leq 1$ for $x\leq z$. Then, splitting the integral again we find together with $\ee^{-x}\leq 1$ that
\begin{multline*}
 \int_{0}^{z}\weight{-a}{b}(x)\ee^{-x}\ee^{-(\Phi[\pr](x)-\Phi[\pr](z))}\dx\leq \int_{0}^{1}x^{-a}\ee^{-(\Phi[\pr](x)-\Phi[\pr](z)}\dx+\int_{1}^{z}x^{b}\ee^{-x}\dx\\*
 \leq \int_{0}^{1}x^{-a}\ee^{-(\Phi[\pr](x)-\Phi[\pr](z))}\dx+\int_{1}^{\infty}x^{b}\ee^{-x}\dx.
\end{multline*}
 The second integral on the right-hand side is obviously bounded by a constant. According to the first part of the proof, also the first integral can be uniformly bounded by a constant which thus finishes the proof.
\end{proof}

Finally, we provide estimates on an auxiliary integral which will appear later in the proof of Proposition~\ref{Prop:bound:layer:est} and which is the reason why the case $\alpha\geq 1/2$ needs some special care.
 
 \begin{lemma}\label{Lem:reg:ef:diff:Phi}
  Let $\Phi$ be as defined in~\eqref{eq:def:Phi} and $\eps$ sufficiently small. If $\alpha\in(0,1/2)$ we have  for $a<1-\alpha$ and all $b\in\R$ the estimate
  \begin{equation*}
   \int_{0}^{z}\weight{-a}{b}(x)\ee^{-x}\abs*{\ee^{\Phi[\dpr{1}](z)-\Phi[\dpr{1}](x)}-\ee^{\Phi[\dpr{2}](z)-\Phi[\dpr{2}](x)}}\dx\lesssim \eps \weight{1-a-\alpha}{0}(z)\norm{\dpr{1}-\dpr{2}}_{\X{-\alpha}{\beta}}
  \end{equation*}
 for all $z>0$ and for each pair of self-similar profiles $\dpr{1}$ and $\dpr{2}$.
 
 If $\alpha\in[1/2,1)$, assume in addition that $W$ satisfies~\eqref{eq:W:lower:bound}. Then for $a\in(0,1)$ and $\delta\in(0,\min\{1-a,\alpha\})$ we have for sufficiently small $\eps>0$ that
  \begin{equation*}
   \int_{0}^{z}\weight{-a}{b}(x)\ee^{-x}\abs*{\ee^{\Phi[\dpr{1}](z)-\Phi[\dpr{1}](x)}-\ee^{\Phi[\dpr{2}](z)-\Phi[\dpr{2}](x)}}\dx\lesssim \eps^{\frac{\delta}{\alpha}} \weight{1-a-\delta}{0}(z)\norm{\dpr{1}-\dpr{2}}_{\X{-\alpha}{\beta}}.
  \end{equation*}
 \end{lemma}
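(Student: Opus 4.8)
The plan is to reduce both estimates to the difference formula
\[
 \ee^{\Phi_1(z)-\Phi_1(x)}-\ee^{\Phi_2(z)-\Phi_2(x)}
 =\ee^{\Phi_2(z)-\Phi_2(x)}\Bigl(\ee^{(\Phi_1(z)-\Phi_2(z))-(\Phi_1(x)-\Phi_2(x))}-1\Bigr),
\]
together with the elementary bound $\abs{\ee^{s}-1}\leq \abs{s}\ee^{\abs{s}}$ and the monotonicity of each $\Phi[\pr]$, so that $\ee^{\Phi_2(z)-\Phi_2(x)}\leq 1$ for $x\leq z$. Combining this with the pointwise estimate from Lemma~\ref{Lem:est:diff:Phi} (in its non-exponential form), which gives
\[
 \abs{\Phi[\dpr{1}](x)-\Phi[\dpr{2}](x)}\lesssim \eps\,\weight{-\alpha}{\alpha-1}(x)\norm{\dpr{1}-\dpr{2}}_{\X{-\alpha}{\beta}},
\]
and the uniform smallness of $\eps$, one controls the exponential factor $\ee^{\abs{s}}$ by a constant. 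Hence, up to the factor $\eps\norm{\dpr{1}-\dpr{2}}_{\X{-\alpha}{\beta}}$, everything reduces to estimating
\[
 \int_{0}^{z}\weight{-a}{b}(x)\ee^{-x}\bigl(\weight{-\alpha}{\alpha-1}(x)+\weight{-\alpha}{\alpha-1}(z)\bigr)\,\ee^{-(\Phi[\dpr{2}](x)-\Phi[\dpr{2}](z))}\dx,
\]
where in the case $\alpha\in[1/2,1)$ one keeps the exponential weight $\ee^{-(\Phi_2(x)-\Phi_2(z))}$ and in the case $\alpha<1/2$ one simply bounds it by $1$.

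For $\alpha\in(0,1/2)$ the remaining integral is $\int_{0}^{z}\weight{-a-\alpha}{\ldots}(x)\ee^{-x}\dx$ plus a $z$-dependent prefactor times $\int_{0}^{z}\weight{-a}{\ldots}(x)\ee^{-x}\dx$; since $a<1-\alpha$ we have $-a-\alpha>-1$ and the integral near zero converges, while for $z\geq 1$ the $\ee^{-x}$ factor makes everything bounded. Using~\eqref{eq:prim:weight:2} (or a direct split at $x=1$) one reads off the growth $\weight{1-a-\alpha}{0}(z)$, which is exactly the claimed weight; for $z\geq 1$ the bound is by a constant, consistent with $\weight{1-a-\alpha}{0}$ being constant there. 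This case is routine.

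The case $\alpha\in[1/2,1)$ is the real obstacle: now $-a-\alpha$ can be $\leq -1$, so the naive integral near zero diverges and one \emph{must} exploit the exponential decay $\ee^{-(\Phi[\dpr{2}](x)-\Phi[\dpr{2}](z))}$ coming from the lower bound~\eqref{eq:W:lower:bound}. The key point is that the integrand, after absorbing $\weight{-\alpha}{\alpha-1}(x)=x^{-\alpha}$ on $x\leq 1$, has the form $x^{-a-\alpha}\ee^{-x}\ee^{-(\Phi_2(x)-\Phi_2(z))}$ with $a+\alpha<1+\alpha$ (since $a<1$), so this is precisely the integral treated in Lemma~\ref{Lem:regularising:effect} with $a$ there replaced by $a+\alpha$ and the admissible range $\delta\in(0,\min\{1+\alpha-(a+\alpha),\alpha\})=(0,\min\{1-a,\alpha\})$; that lemma outputs $\eps^{(\delta-\alpha)/\alpha}\weight{1+\alpha-(a+\alpha)-\delta}{0}(z)=\eps^{(\delta-\alpha)/\alpha}\weight{1-a-\delta}{0}(z)$. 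The $z$-dependent prefactor term $\weight{-\alpha}{\alpha-1}(z)\int_{0}^{z}x^{-a}\ee^{-x}\ee^{-(\Phi_2(x)-\Phi_2(z))}\dx$ is handled the same way with $\delta$ shifted, and on $z\geq 1$ one uses plain integrability; a short check shows the resulting powers of $z$ and $\eps$ combine to no worse than $\eps^{\delta/\alpha}\weight{1-a-\delta}{0}(z)$ after reinserting the overall factor $\eps$ from Lemma~\ref{Lem:est:diff:Phi} (note $1+\tfrac{\delta-\alpha}{\alpha}=\tfrac{\delta}{\alpha}$). Assembling the two pieces and multiplying back the factor $\norm{\dpr{1}-\dpr{2}}_{\X{-\alpha}{\beta}}$ gives the claim. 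The main care needed is bookkeeping of the admissible exponent ranges so that Lemma~\ref{Lem:regularising:effect} genuinely applies, and making sure the loss $\eps^{\delta/\alpha}$ (rather than a full power of $\eps$) is acceptable downstream — which it is, since in the proof of Proposition~\ref{Prop:bound:layer:est} this only needs to be $o(1)$ as $\eps\to0$.
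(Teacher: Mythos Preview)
Your overall structure matches the paper's proof almost exactly: reduce to the pointwise bound from Lemma~\ref{Lem:est:diff:Phi}, obtain the factor $\weight{-\alpha}{\alpha-1}(x)+\weight{-\alpha}{\alpha-1}(z)$, then for $\alpha<1/2$ integrate directly using~\eqref{eq:prim:weight:1}, while for $\alpha\geq 1/2$ invoke Lemma~\ref{Lem:regularising:effect} with exponent $a+\alpha$ and recombine the $\eps$-powers via $\eps\cdot\eps^{(\delta-\alpha)/\alpha}=\eps^{\delta/\alpha}$. That part is fine.

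The gap is at the very first step. Your factorisation together with $\abs{\ee^{s}-1}\leq \abs{s}\,\ee^{\abs{s}}$ produces an extra factor $\ee^{\abs{s}}$, and you then assert that ``the uniform smallness of $\eps$'' makes this a constant. It does not: by Lemma~\ref{Lem:est:diff:Phi} one only has $\abs{s}\lesssim \eps\,\weight{-\alpha}{\alpha-1}(x)\,\norm{\dpr{1}-\dpr{2}}_{\X{-\alpha}{\beta}}$, and for $x\leq 1$ this is $\sim \eps\,x^{-\alpha}\cdot C$, which blows up as $x\to 0$ for \emph{every} fixed $\eps>0$. So $\ee^{\abs{s}}$ is not uniformly bounded in $x$, and in the case $\alpha<1/2$ (where you have already discarded the exponential prefactor) the resulting integrand $x^{-a-\alpha}\ee^{C\eps x^{-\alpha}}$ is not even integrable near zero. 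The same objection applies to your $\alpha\geq 1/2$ reduction as written, because you absorb $\ee^{\abs{s}}$ into a constant \emph{before} keeping $\ee^{-(\Phi_{2}(x)-\Phi_{2}(z))}$.

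The fix is immediate and is precisely what the paper does: instead of bounding $\abs{\ee^{s}-1}$, use directly that for $A,B\leq 0$ one has
\[
\abs{\ee^{A}-\ee^{B}}\leq \ee^{\max(A,B)}\abs{A-B}\leq \abs{A-B},
\]
equivalently $\abs{\ee^{-u}-\ee^{-v}}\leq \ee^{-\min(u,v)}\abs{u-v}$ for $u,v\geq 0$. Applied with $A=\Phi_{1}(z)-\Phi_{1}(x)$ and $B=\Phi_{2}(z)-\Phi_{2}(x)$ (both $\leq 0$ by monotonicity of $\Phi$), this yields exactly the integrand you wrote down, \emph{without} any $\ee^{\abs{s}}$ factor, and with the exponential prefactor $\ee^{-\min_{m}(\Phi_{m}(x)-\Phi_{m}(z))}$ available to feed into Lemma~\ref{Lem:regularising:effect} in the $\alpha\geq 1/2$ case. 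After this one-line correction your argument coincides with the paper's.
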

 
 \begin{proof}
  We first note that the elementary inequality $\abs{\ee^{-u}-\ee^{-v}}\lesssim \ee^{-\min\{u,v\}}\abs{u-v}$ for $u,v\geq 0$ together with Lemma~\ref{Lem:est:diff:Phi}, the monotonicity of $\Phi$ and the assumption $x\leq z$ yields for all $\alpha\in(0,1)$ the estimate
 \begin{align}
  &\phantom{{}\leq{}}\abs*{\ee^{\Phi[\dpr{1}](z)-\Phi[\dpr{1}](x)}-\ee^{\Phi[\dpr{2}](z)-\Phi[\dpr{2}](x)}}\nonumber\\
  &\lesssim \ee^{-\min_{m\in\{1,2\}}\{\Phi[\dpr{m}](x)-\Phi[\dpr{m}](z)\}}\Bigl[\abs{\Phi[\dpr{1}](x)-\Phi[\dpr{2}](x)}+\abs{\Phi[\dpr{1}](z)-\Phi[\dpr{2}](z)}\Bigr]\nonumber\\
  &\lesssim\eps\ee^{-\min_{m\in\{1,2\}}\{\Phi[\dpr{m}](x)-\Phi[\dpr{m}](z)\}} \bigl(\weight{-\alpha}{\alpha-1}(x)+\weight{-\alpha}{\alpha-1}(z)\bigr)\norm{\dpr{1}-\dpr{2}}_{\X{-\alpha}{\beta}}\label{eq:phi:diff:est:1}\\
  &\leq \eps\bigl(\weight{-\alpha}{\alpha-1}(x)+\weight{-\alpha}{\alpha-1}(z)\bigr)\norm{\dpr{1}-\dpr{2}}_{\X{-\alpha}{\beta}}.\label{eq:phi:diff:est:2}
 \end{align}
 We consider first the case $\alpha<1/2$. Precisely, together with \cref{eq:phi:diff:est:2,eq:weight:monotonicity} we deduce
 \begin{multline*}
  \int_{0}^{z}\weight{-a}{b}(x)\ee^{-x}\abs*{\ee^{\Phi[\dpr{1}](z)-\Phi[\dpr{1}](x)}-\ee^{\Phi[\dpr{2}](z)-\Phi[\dpr{2}](x)}}\dx\\*
  \lesssim \eps\biggl(\int_{0}^{z}\weight{-a-\alpha}{\alpha+b-1}(x)\ee^{-x}\dx+\int_{0}^{z}\weight{-a}{b}(x)\ee^{-x}\dx\weight{-\alpha}{\alpha-1}(z)\biggr)\norm{\dpr{1}-\dpr{2}}_{\X{-\alpha}{\beta}}.
 \end{multline*}
 By assumption, we have $-a-\alpha>-1$, and thus also $a>-1$, such that we may apply~\eqref{eq:prim:weight:1} to deduce
  \begin{multline*}
  \int_{0}^{z}\weight{-a}{b}(x)\ee^{-x}\abs*{\ee^{\Phi[\dpr{1}](z)-\Phi[\dpr{1}](x)}-\ee^{\Phi[\dpr{2}](z)-\Phi[\dpr{2}](x)}}\dx\\*
  \lesssim \eps\bigl(\weight{1-a-\alpha}{0}(z)+\weight{1-a-\alpha}{\alpha-1}(z)\bigr)\norm{\dpr{1}-\dpr{2}}_{\X{-\alpha}{\beta}}\lesssim \eps\weight{1-a-\alpha}{0}(z)\norm{\dpr{1}-\dpr{2}}_{\X{-\alpha}{\beta}}.
 \end{multline*}
 In the last step we also exploited~\eqref{eq:weight:monotonicity} and $\alpha-1<0$. This shows the first part of the lemma. 
 
 For the second part, i.e.\@ for $\alpha\geq 1/2$, we have to exploit the exponential decay of the profiles close to zero. In fact, proceeding as above but now with~\eqref{eq:phi:diff:est:1} we obtain
  \begin{multline*}
  \int_{0}^{z}\weight{-a}{b}(x)\ee^{-x}\abs*{\ee^{\Phi[\dpr{1}](z)-\Phi[\dpr{1}](x)}-\ee^{\Phi[\dpr{2}](z)-\Phi[\dpr{2}](x)}}\dx\\*
  \shoveleft{\lesssim \eps\biggl(\int_{0}^{z}\weight{-a-\alpha}{\alpha+b-1}(x)\ee^{-\min_{m\in\{1,2\}}\{\Phi[\dpr{m}](x)-\Phi[\dpr{m}](z)\}}\dx}\\*
  +\int_{0}^{z}\weight{-a}{b}(x)\ee^{-x}\ee^{-\min_{m\in\{1,2\}}\{\Phi[\dpr{m}](x)-\Phi[\dpr{m}](z)\}}\dx\weight{-\alpha}{\alpha-1}(z)\biggr)\norm{\dpr{1}-\dpr{2}}_{\X{-\alpha}{\beta}}.
 \end{multline*}
 Together with Lemma~\ref{Lem:regularising:effect} we then deduce
 \begin{multline*}
  \int_{0}^{z}\weight{-a}{b}(x)\ee^{-x}\abs*{\ee^{\Phi[\dpr{1}](z)-\Phi[\dpr{1}](x)}-\ee^{\Phi[\dpr{2}](z)-\Phi[\dpr{2}](x)}}\dx\\*
  \lesssim \eps^{\frac{\delta}{\alpha}}\bigl(\weight{1-a-\delta}{0}(z)+\weight{1-a-\delta}{\alpha-1}(z)\bigr)\norm{\dpr{1}-\dpr{2}}_{\X{-\alpha}{\beta}}\lesssim \eps^{\frac{\delta}{\alpha}}\weight{1-a-\delta}{0}(z)\norm{\dpr{1}-\dpr{2}}_{\X{-\alpha}{\beta}}.
 \end{multline*}
 In the last step, we again used that $\alpha<1$ together with~\eqref{eq:weight:monotonicity}. This finishes also the second part of the proof.
 \end{proof}

 \subsection{Proof of Proposition~\ref{Prop:bound:layer:est}}\label{Sec:actual:proof:bl}
 
 Relying on the results collected in the preceding subsection, we will now give the proof of Proposition~\ref{Prop:bound:layer:est}.
 
 \subsubsection{Estimating~\eqref{eq:bl:0}}
 
 We will estimate the different expressions appearing in~\eqref{eq:bl:0} separately. For this, we recall the following elementary bounds from~\cite{NTV15,Thr17a}. Precisely, the inequality $\abs{\ee^{-u}-\ee^{-v}}\lesssim \abs{u-v}$ for $u,v\geq 0$ together with Lemma~\ref{Lem:smallness:kappa} and the assumption $x\leq z$ allows to estimate
  \begin{equation*}
  \abs*{(x/z)^{\kappa_{1}}-(x/z)^{\kappa_{2}}}\lesssim (z/x)^{\nu_{\eps}}\abs{\kappa_1-\kappa_2}.
 \end{equation*}
 Similarly, we have for $x\leq z$ that 
 \begin{equation*}
  (x/z)^{\kappa_{k}}\lesssim (z/x)^{\nu_{\eps}}\qquad\text{and}\qquad \ee^{\Phi_{k}(z)-\Phi_{k}(x)}\leq 1\qquad \text{for }k=1,2.
 \end{equation*}
Finally, we note that
\begin{equation*}\label{eq:exp:reg:effect}
 \frac{1-\ee^{-z}}{z}\leq 1 \qquad \text{for all }z\geq 0.
\end{equation*}
Then, taking the absolute value, and using additionally that $y/z\leq 1$ as well as $(z-y)/z\leq 1$ for $y\leq z$ we can estimate from~\eqref{eq:bl:0} that
\begin{multline*}
 \abs*{\dpr{1}-\dpr{2}}\leq \eps\sum_{k=1}^{4} \abs*{\K_{k}[\dpr{1},\dpr{2}]}+\sum_{k=1}^{3}\abs*{\J_{k}[\dpr{1},\dpr{2}]}\\*
 \lesssim \eps \int_{x}^{\infty}(z/x)^{\nu_{\eps}}\abs{\beta_{W}[\dpr{1}](z)}\dpr{1}(z)\dz\abs{\kappa_1-\kappa_2}\\*
 +\eps \int_{x}^{\infty}(z/x)^{\nu_{\eps}} \abs*{\ee^{\Phi_{1}(z)-\Phi_{1}(x)}-\ee^{\Phi_{2}(z)-\Phi_{2}(x)}}\abs{\beta_{W}[\dpr{1}](z)}\dpr{1}(z)\dz\\*
 +\eps \int_{x}^{\infty}(z/x)^{\nu_{\eps}}\abs{\beta_{W}[\dpr{1}](z)-\beta_{W}[\dpr{2}](z)}\dpr{1}(z)\dz\\*
  +\eps \int_{x}^{\infty}\int_{0}^{z}(z/x)^{\nu_{\eps}}\abs{\beta_{W}[\dpr{2}](z)}\abs{\dpr{1}(z)-\dpr{2}(z)}\dz\\*
  +\int_{x}^{\infty}\int_{0}^{z}(z/x)^{\nu_{\eps}}\frac{1}{z}K_{\eps}(y,z-y)\dpr{1}(y)\dpr{1}(z-y)\dy\dz\abs{\kappa_1-\kappa_2}\\*
 +\int_{x}^{\infty}\int_{0}^{z}(z/x)^{\nu_{\eps}}\frac{1}{z}\abs*{\ee^{\Phi_{1}(z)-\Phi_{1}(x)}-\ee^{\Phi_{2}(z)-\Phi_{2}(x)}}K_{\eps}(y,z-y)\dpr{1}(y)\dpr{1}(z-y)\dy\dz\\*
 +\int_{x}^{\infty}\int_{0}^{z}(z/x)^{\nu_{\eps}}\frac{1}{z}K_{\eps}(y,z-y)\abs*{\dpr{1}(y)-\dpr{2}(y)}\Bigl(\dpr{1}(z-y)+\dpr{2}(z-y)\Bigr)\dy\dz.
\end{multline*}

Based on this inequality, we will now separately estimate the terms $\K_{k}$ and $\J_{k}$. For this, we will also exploit Fubini's theorem in the forms
\begin{multline}\label{eq:proof:bl:Fubini}
 \int_{0}^{\infty}\int_{x}^{\infty}(\cdots)\dz\dx=\int_{0}^{\infty}\int_{0}^{z}(\cdots)\dx\dz\qquad \text{and}\\*
 \int_{0}^{\infty}\int_{x}^{\infty}\int_{0}^{z}F(x,y,z)\dy\dz\dx=\int_{0}^{\infty}\int_{0}^{\infty}\int_{0}^{y+z}F(x,y,z+y)\dx\dz\dy.
\end{multline}

\subsubsection{Estimate of $\K_{1}$}

For $\eps>0$ sufficiently small such that $\nu_{\eps}<1-\alpha$ we get together with \cref{Lem:est:beta:W,Lem:moments,eq:prim:weight:1,eq:weight:shift,eq:weight:additivity} that 
\begin{multline*}
 \norm{\ee^{-\cdot}\K_{1}[\dpr{1},\dpr{2}]}_{\X{-\alpha}{\beta}}\lesssim \int_{0}^{\infty}z^{\nu_{\eps}}\weight{-\alpha}{\alpha}(z)\dpr{1}(z)\int_{0}^{z}\weight{-\alpha-\nu_{\eps}}{\beta-\nu_{\eps}}(x)\ee^{-x}\dx\dz\abs{\kappa_1-\kappa_2}\\*
 \lesssim \int_{0}^{\infty}\weight{1-2\alpha}{\alpha+\nu_{\eps}}(z)\dpr{1}(z)\dz\abs{\kappa_1-\kappa_2}\lesssim \abs{\kappa_1-\kappa_2}.
\end{multline*}
Thus, taking also Remark~\ref{Rem:diff:kappa} into account, we conclude
\begin{equation}\label{eq:proof:bl:K1}
 \eps\norm{\ee^{-\cdot}\K_{1}[\dpr{1},\dpr{2}]}_{\X{-\alpha}{\beta}}\lesssim \eps \norm{\dpr{1}-\dpr{2}}_{\X{-\alpha}{\beta}}.
\end{equation}

\subsubsection{Estimate of $\K_{2}$}

For this term, we have to distinguish whether $\alpha<1/2$ or $\alpha\in [1/2,1)$. In the first case, we take $\eps>0$ small such that $\nu_{\eps}<1-2\alpha$ which is possible since $\alpha<1/2$. Then, relying on \cref{Lem:reg:ef:diff:Phi,Lem:est:beta:W,eq:weight:additivity,eq:prim:weight:1} we obtain
\begin{multline*}
 \norm{\ee^{-\cdot}\K_{2}[\dpr{1},\dpr{2}]}_{\X{-\alpha}{\beta}}\\*
 \lesssim \int_{0}^{\infty}z^{\nu_{\eps}}\weight{-\alpha}{\alpha}(z)\dpr{1}(z)\int_{0}^{z}\weight{-\alpha-\nu_{\eps}}{\beta}(x)\ee^{-x}\abs*{\ee^{\Phi_{1}(z)-\Phi_{1}(x)}-\ee^{\Phi_{2}(z)-\Phi_{2}(x)}}\dx\dz\\*
 \lesssim \eps\int_{0}^{\infty}\weight{1-3\alpha}{0}(z)\dpr{1}(z)\dz\norm{\dpr{1}-\dpr{2}}_{\X{-\alpha}{\beta}}.
\end{multline*}
Note that the assumption $\alpha<1/2$ was used for the application of Lemma~\ref{Lem:reg:ef:diff:Phi}. Moreover, it also guarantees that $1-3\alpha>-1$ and thus Lemma~\ref{Lem:moments} allows to estimate the last integral on the right-hand side to get
\begin{equation}\label{eq:proof:bl:K2:small}
 \eps\norm{\ee^{-\cdot}\K_{2}[\dpr{1},\dpr{2}]}_{\X{-\alpha}{\beta}}\lesssim \eps \norm{\dpr{1}-\dpr{2}}_{\X{-\alpha}{\beta}}.
\end{equation}

If $\alpha\geq 1/2$, we proceed analogously exploiting the second estimate in Lemma~\ref{Lem:reg:ef:diff:Phi}. In fact, choosing $\delta\in(0,1-\alpha)$ and $\eps>0$ sufficiently small such that $\nu_{\eps}<1-\alpha-\delta$ we find
\begin{multline*}
 \norm{\ee^{-\cdot}\K_{2}[\dpr{1},\dpr{2}]}_{\X{-\alpha}{\beta}}\\*
 \lesssim \int_{0}^{\infty}z^{\nu_{\eps}}\weight{-\alpha}{\alpha}(z)\dpr{1}(z)\int_{0}^{z}\weight{-\alpha-\nu_{\eps}}{\beta}(x)\ee^{-x}\abs*{\ee^{\Phi_{1}(z)-\Phi_{1}(x)}-\ee^{\Phi_{2}(z)-\Phi_{2}(x)}}\dx\dz\\*
 \lesssim \eps^{\frac{\delta}{\alpha}}\int_{0}^{\infty}\weight{1-2\alpha-\delta}{0}(z)\dpr{1}(z)\dz\norm{\dpr{1}-\dpr{2}}_{\X{-\alpha}{\beta}}.
\end{multline*}
From the choice of $\delta$ we have $1-2\alpha-\delta>-\alpha>-1$ and thus Lemma~\ref{Lem:moments} allows to conclude
\begin{equation}\label{eq:proof:bl:K2:large}
 \eps\norm{\ee^{-\cdot}\K_{2}[\dpr{1},\dpr{2}]}_{\X{-\alpha}{\beta}}\lesssim \eps^{1+\frac{\delta}{\alpha}} \norm{\dpr{1}-\dpr{2}}_{\X{-\alpha}{\beta}}.
\end{equation}

\subsubsection{Estimate of $\K_{3}$}

To estimate $\K_{3}$, we take $\eps>0$ small such that $\nu_{\eps}<1-\alpha$ and recall \cref{Lem:est:diff:beta:W,eq:weight:shift,eq:weight:additivity,eq:prim:weight:1} to get
\begin{multline*}
 \norm{\ee^{-\cdot}\K_{3}[\dpr{1},\dpr{2}]}_{\X{-\alpha}{\beta}}\\*
 \lesssim \int_{0}^{\infty}z^{\nu_{\eps}}\weight{-\alpha}{\alpha}(z)\dpr{1}(z)\int_{0}^{z}\weight{-\alpha-\nu_{\eps}}{\beta-\nu_{\eps}}(x)\ee^{-x}\dx\dz\norm{\dpr{1}-\dpr{2}}_{\X{-\alpha}{\beta}}\\*
 \lesssim \int_{0}^{\infty}\weight{1-2\alpha}{\nu_{\eps}+\alpha}(z)\dpr{1}(z)\dz\norm{\dpr{1}-\dpr{2}}_{\X{-\alpha}{\beta}}.
\end{multline*}
Since $1-2\alpha>-1$ for all $\alpha\in(0,1)$, we can apply Lemma~\ref{Lem:moments} to bound the last integral on the right-hand side provided $\eps>0$ is sufficiently small. With this, we get
\begin{equation}\label{eq:proof:bl:K3}
 \eps\norm{\ee^{-\cdot}\K_{3}[\dpr{1},\dpr{2}]}_{\X{-\alpha}{\beta}}\lesssim \eps \norm{\dpr{1}-\dpr{2}}_{\X{-\alpha}{\beta}}.
\end{equation}

\subsubsection{Estimate of $\K_{4}$}

We choose $\eps>0$ small such that $\nu_{\eps}<1-\alpha$ and use \cref{Lem:est:beta:W,eq:weight:shift,eq:prim:weight:1,eq:weight:shift} to estimate
\begin{multline*}
 \norm{\ee^{-\cdot}\K_{4}[\dpr{1},\dpr{2}]}_{\X{-\alpha}{\beta}}\lesssim \int_{0}^{\infty}z^{\nu_{\eps}}\weight{-\alpha}{\alpha}(z)\abs{\dpr{1}(z)-\dpr{2}(z)}\int_{0}^{z}\weight{-\alpha-\nu_{\eps}}{\beta-\nu_{\eps}}(x)\ee^{-x}\dx\dz\\*
 \lesssim \int_{0}^{\infty}\weight{1-2\alpha}{\alpha+\nu_{\eps}}(z)\abs{\dpr{1}(z)-\dpr{2}(z)}\dz.
\end{multline*}
Since $\alpha<1$ we have $1-2\alpha>-\alpha$ and from the choice of $\nu_{\eps}$ we deduce $\alpha+\nu_{\eps}<1<\beta$. Thus, together with \cref{eq:weight:monotonicity,Lem:moments} we obtain for sufficiently small $\eps>0$ that
\begin{equation}\label{eq:proof:bl:K4}
 \eps\norm*{\ee^{-\cdot}\K_{3}[\dpr{1},\dpr{2}]}\lesssim \eps\int_{0}^{\infty}\weight{-\alpha}{\beta}(z)\abs{\dpr{1}(z)-\dpr{2}(z)}\dz\lesssim \eps \norm{\dpr{1}-\dpr{2}}_{\X{-\alpha}{\beta}}.
\end{equation}
 
\subsubsection{Estimate of $\J_{1}$}

For $\eps>0$ sufficiently small such that $\nu_{\eps}<1-\alpha$ we deduce together with \cref{eq:proof:bl:Fubini,eq:prim:weight:1,eq:weight:shift} that
\begin{multline}\label{eq:intermediate:1}
 \norm{\ee^{-\cdot}\J_{1}[\dpr{1},\dpr{2}]}_{\X{-\alpha}{\beta}}\\*
 \lesssim \int_{0}^{\infty}\int_{0}^{\infty}(y+z)^{\nu_{\eps}-1}K_{\eps}(y,z)\dpr{1}(y)\dpr{1}(z)\int_{0}^{y+z}\weight{-\alpha-\nu_{\eps}}{\beta-\nu_{\eps}}(x)\ee^{-x}\dx\dz\dy\abs{\kappa_{2}-\kappa_{2}}\\*
 \lesssim \int_{0}^{\infty}\int_{0}^{\infty}\weight{-\alpha}{\nu_{\eps}-1}(y+z)K_{\eps}(y,z)\dpr{1}(y)\dpr{1}(z)\dz\dy\abs{\kappa_{2}-\kappa_{2}}.
\end{multline}
For sufficiently small $\eps>0$ the integral on the right-hand side can be estimated by a uniform constant due to \cref{Lem:moments,Lem:aux:int:bl:est}. Thus, together with Lemma~\ref{Lem:est:kappa} we finally get for any $\mu>0$ that
\begin{equation}\label{eq:proof:bl:J1}
  \norm{\ee^{-\cdot}\J_{1}[\dpr{1},\dpr{2}]}_{\X{-\alpha}{\beta}}\lesssim \mu\norm{\dpr{1}-\dpr{2}}_{\X{-\alpha}{\beta}}+C_{\mu}\norm*{\bigl(1-\exp(-\cdot)\bigr)(\dpr{1}-\dpr{2})}_{\X{-\alpha}{\beta}}.
\end{equation}

\subsubsection{Estimate of $\J_{2}$}

To estimate $\J_{2}$ we first note that for all $\alpha\in(0,1)$ we have

\begin{multline}\label{eq:intermediate:2}
 \norm{\ee^{-\cdot}\J_{2}[\dpr{1},\dpr{2}]}_{\X{-\alpha}{\beta}}\\*
 \shoveleft{\lesssim \int_{0}^{\infty}\int_{0}^{\infty}(y+z)^{\nu_{\eps}-1}K_{\eps}(y,z)\dpr{1}(y)\dpr{1}(z)\times}\\*
 \times\int_{0}^{y+z}\weight{-\alpha-\nu_{\eps}}{\beta-\nu_{\eps}}(x)\ee^{-x}\abs*{\ee^{\Phi_{1}(y+z)-\Phi_{1}(x)}-\ee^{\Phi_{2}(y+z)-\Phi_{2}(x)}}\dx\dz\dy.
\end{multline}
Now we need to distinguish again whether $\alpha\in(0,1/2)$ or $\alpha\in[1/2,1)$. In the first case we choose $\eps>0$ sufficiently small such that $\nu_{\eps}<1-2\alpha$ which is possible for $\alpha<1/2$. Then we get by means of \cref{Lem:reg:ef:diff:Phi,eq:weight:shift} that
\begin{multline*}
 \norm{\ee^{-\cdot}\J_{2}[\dpr{1},\dpr{2}]}_{\X{-\alpha}{\beta}}\\*
 \lesssim \eps \int_{0}^{\infty}\int_{0}^{\infty}\weight{-2\alpha}{\nu_{\eps}-1}(y+z)K_{\eps}(y,z)\dpr{1}(y)\dpr{1}(z)\dz\dy\norm{\dpr{1}-\dpr{2}}_{\X{-\alpha}{\beta}}.
\end{multline*}
Together with \cref{Lem:moments,Lem:aux:int:bl:est} this can be bounded as
\begin{equation}\label{eq:proof:bl:J2:small}
 \norm{\ee^{-\cdot}\J_{2}[\dpr{1},\dpr{2}]}_{\X{-\alpha}{\beta}}\lesssim \eps\norm{\dpr{1}-\dpr{2}}_{\X{-\alpha}{\beta}}.
\end{equation}
If $\alpha\geq 1/2$, we proceed similarly, i.e.\@ we fix $\delta<1-\alpha$ and take $\eps>0$ sufficiently small such that $\nu_{\eps}<1-\alpha-\delta$. Then, \cref{Lem:reg:ef:diff:Phi,eq:weight:shift} imply
\begin{multline*}
 \norm{\ee^{-\cdot}\J_{2}[\dpr{1},\dpr{2}]}_{\X{-\alpha}{\beta}}\\*
 \lesssim \eps^{\frac{\delta}{\alpha}}\int_{0}^{\infty}\int_{0}^{\infty}\weight{-\alpha-\delta}{\nu_{\eps}-1}(y+z)K_{\eps}(y,z)\dpr{1}(y)\dpr{1}(z)\dz\dy\norm{\dpr{1}-\dpr{2}}_{\X{-\alpha}{\beta}}.
\end{multline*}
Finally, since for $\alpha\geq 1/2$ our choice of $\delta$ in particular ensures that $\alpha+\delta\leq 2\alpha$, \cref{Lem:moments,Lem:aux:int:bl:est} as before imply
\begin{equation}\label{eq:proof:bl:J2:large}
 \norm{\ee^{-\cdot}\J_{2}[\dpr{1},\dpr{2}]}_{\X{-\alpha}{\beta}}\lesssim \eps^{\frac{\delta}{\alpha}}\norm{\dpr{1}-\dpr{2}}_{\X{-\alpha}{\beta}}.
\end{equation}

\subsubsection{Estimate of $\J_{3}$}

We take $\eps>0$ small such that $\nu_{\eps}<1-\alpha$ and recall \cref{eq:proof:bl:Fubini,eq:weight:shift,eq:prim:weight:1} to get
\begin{multline*}
 \norm{\ee^{-\cdot}\J_{2}[\dpr{1},\dpr{2}]}_{\X{-\alpha}{\beta}}\\*
 \lesssim \int_{0}^{\infty}\int_{0}^{\infty}\frac{K_{\eps}(y,z)}{(y+z)^{1-\nu_{\eps}}}\abs{\dpr{1}(y)-\dpr{2}(y)}\bigl(\dpr{1}(z)+\dpr{2}(z)\bigr)\int_{0}^{y+z}\weight{-\alpha-\nu_{\eps}}{\beta-\nu_{\eps}}(x)\ee^{-x}\dx\dz\dy\\*
 \lesssim \int_{0}^{\infty}\int_{0}^{\infty}\weight{-\alpha}{\nu_{\eps}-1}(y+z)K_{\eps}(y,z)\abs{\dpr{1}(y)-\dpr{2}(y)}\bigl(\dpr{1}(z)+\dpr{2}(z)\bigr)\dz\dy.
\end{multline*}
Recalling from~\eqref{eq:Ass:K2} that $K_{\eps}=2+\eps W$, we have
\begin{multline*}
 \norm{\ee^{-\cdot}\J_{2}[\dpr{1},\dpr{2}]}_{\X{-\alpha}{\beta}}\lesssim \int_{0}^{\infty}\int_{0}^{\infty}\weight{-\alpha}{\nu_{\eps}-1}(y+z)\abs{\dpr{1}(y)-\dpr{2}(y)}\bigl(\dpr{1}(z)+\dpr{2}(z)\bigr)\dz\dy\\*
 +\eps\int_{0}^{\infty}\int_{0}^{\infty}\weight{-\alpha}{\nu_{\eps}-1}(y+z)W(y,z)\abs{\dpr{1}(y)-\dpr{2}(y)}\bigl(\dpr{1}(z)+\dpr{2}(z)\bigr)\dz\dy.
\end{multline*}
Since $\nu_{\eps}-1<0$ we can easily estimate together with~\eqref{eq:weight:monotonicity} that $\weight{-\alpha}{\nu_{\eps}-1}(y+z)\lesssim \weight{-\alpha}{0}(y+z)\lesssim \weight{-\alpha}{0}(z)$ which further implies
\begin{multline*}
 \norm{\ee^{-\cdot}\J_{2}[\dpr{1},\dpr{2}]}_{\X{-\alpha}{\beta}}\lesssim \int_{0}^{\infty}\abs{\dpr{1}(y)-\dpr{2}(y)}\dy\int_{0}^{\infty}\weight{-\alpha}{0}(z)\bigl(\dpr{1}(z)+\dpr{2}(z)\bigr)\dz\\*
 +\eps\int_{0}^{\infty}\int_{0}^{\infty}\weight{-\alpha}{\nu_{\eps}-1}(y+z)W(y,z)\abs{\dpr{1}(y)-\dpr{2}(y)}\bigl(\dpr{1}(z)+\dpr{2}(z)\bigr)\dz\dy.
\end{multline*}
Thus, together with \cref{Lem:est:kappa,Lem:moments,Lem:aux:int:bl:est} we obtain for all $\mu>0$ and sufficiently small $\eps>0$ that
\begin{equation}\label{eq:proof:bl:J3}
 \norm{\ee^{-\cdot}\J_{2}[\dpr{1},\dpr{2}]}_{\X{-\alpha}{\beta}}\lesssim (\mu+\eps)\norm{\dpr{1}-\dpr{2}}_{\X{-\alpha}{\beta}}+C_{\mu}\norm*{\bigl(1-\exp(-\cdot)\bigr)(\dpr{1}-\dpr{2})}_{\X{-\alpha}{\beta}}.
\end{equation}

\subsubsection{End of the proof}

Thus, in the case of $\alpha<1/2$ combining~\cref{eq:proof:bl:K1,eq:proof:bl:K2:small,eq:proof:bl:K3,eq:proof:bl:K4,eq:proof:bl:J1,eq:proof:bl:J2:small,eq:proof:bl:J3} or in the case of $\alpha\geq 1/2$, combining~\cref{eq:proof:bl:K1,eq:proof:bl:K2:large,eq:proof:bl:K3,eq:proof:bl:K4,eq:proof:bl:J1,eq:proof:bl:J2:large,eq:proof:bl:J3} the stated estimate easily follows by re-choosing $\mu>0$ appropriately and taking $\eps>0$ sufficiently small.

 \section{Proof of Proposition~\ref{Prop:continuity:L:inverse}}\label{Sec:proof:inversion}
 
 In this section, we will give the proof of Proposition~\ref{Prop:continuity:L:inverse}. This is in principle straightforward but the corresponding calculations are relatively lengthy. More precisely, we define the operators
 \begin{equation}\label{eq:def:pre:inverse}
  \A[g](x)\vcc=g(x)+2\hs{1}(x)\int_{1}^{x}\biggl(\frac{\ee^{y}}{y}-\int_{1}^{y}\frac{\ee^{z}}{z}\dz\biggr)g(y)\dy  -2\hs{2}(x)\int_{x}^{\infty}g(y)\dy
 \end{equation}
 and 
 \begin{equation}\label{eq:def:inverse}
  \A_{0}[g]\vcc=\A[g]+\int_{0}^{\infty}y\A[g](y)\dy \hs{1}
 \end{equation}
with 
\begin{equation}\label{eq:m1:m2}
 \hs{1}(x)=(1-x)\ee^{-x}\qquad \text{and}\qquad \hs{2}(x)=1+(1-x)\ee^{-x}\int_{1}^{x}\frac{\ee^{z}}{z}\dz.
\end{equation}
We then claim, that the inverse $\LL^{-1}$ of $\LL$ is given by $\A_{0}$. More precisely, the strategy is as follows:
\begin{enumerate}
 \item In Section~\ref{Sec:rew:lin:op} we will derive several equivalent formulas for $\LL$ as a preparatory step.
 \item In Section~\ref{Sec:inv:cont} we will show that $\A_{0}$ is well-defined on $\X{a}{b}$ and maps continuously to $\X[0]{a}{b}$ for each $a\in(-1,1)$ and $b>1$.
 \item In Section~\ref{Sec:lin:inj} we will prove that $\ker \LL\cap \X[0]{a}{b}=\{0\}$, i.e.\@ that $\LL$ is injective on $\X[0]{a}{b}=\{0\}$.
 \item Section~\ref{Sec:lin:surj} is devoted to showing that $\LL\circ \A_{0}=\id$ on $\X{a}{b}$, i.e.\@ that $\LL$ maps surjectively to $\X{a}{b}$. 
\end{enumerate}
Together, this then yields that $\LL$ is invertible on $\X[0]{a}{b}$ with inverse $\LL^{-1}=\A_{0}$. A derivation of~\eqref{eq:def:pre:inverse} or~\eqref{eq:def:inverse} can be found in \cref{Sec:inverse:derivation}.

 \subsection{Rewriting the linearised operator}\label{Sec:rew:lin:op}
 
 As a first step, we give some alternative representations of $\LL$ which will simplify some of the computations below. In fact, we recall from \cref{eq:linearised:operator:abstract,eq:def:bilinear:forms} that
 \begin{equation}\label{eq:linearised:operator:0}
  \begin{split}
   \LL[h](x)&=h(x)-\frac{2}{x^2}\int_{0}^{x}\int_{x-y}^{\infty}y\ee^{-y}h(z)+yh(y)\ee^{-z}\dz\dy\\
   &=h(x)-\frac{2}{x^2}\int_{0}^{x}\int_{x-y}^{\infty}y\ee^{-y}h(z)\dz\dy-\frac{2}{x^2}\int_{0}^{x}\int_{x-y}^{\infty}yh(y)\ee^{-z}\dz\dy.
  \end{split}
 \end{equation}
 Now, we split the first integral on the right-hand side and apply Fubini's theorem in the form $\int_{0}^{x}\int_{x-y}^{\infty}(\cdots)\dz\dy=\int_{0}^{x}\int_{x-z}^{x}(\cdots)\dy\dz+\int_{x}^{\infty}\int_{0}^{x}(\cdots)\dy\dz$ while in the second term we just compute the $z$-integral. Together this yields
 \begin{multline*}
  \LL[h](x)=h(x)-\frac{2}{x^2}\int_{0}^{x}\int_{x-z}^{x}y\ee^{-y}\dy h(z)\dz\\*
  -\frac{2}{x^2}\int_{0}^{x}y\ee^{-y}\dy\int_{x}^{\infty}h(z)\dz-\frac{2\ee^{-x}}{x^2}\int_{0}^{x}y\ee^{y}h(y)\dy.
 \end{multline*}
 Computing now the first two integrals in $y$ on the right-hand side, we find
  \begin{multline*}
  \LL[h](x)=h(x)+\frac{2(x+1)\ee^{-x}}{x^2}\int_{0}^{x}h(z)\dz-\frac{2\ee^{-x}}{x}\int_{0}^{x}\ee^{z}h(z)\dz+\frac{2\ee^{-x}}{x^2}\int_{0}^{x}(z-1)\ee^{z}h(z)\dz\\*
  +\frac{2(x+1)\ee^{-x}-2}{x^2}\int_{x}^{\infty}h(z)\dz-\frac{2\ee^{-x}}{x^2}\int_{0}^{x}y\ee^{y}h(y)\dy.
 \end{multline*}
 Summarising these expressions, we finally get
 \begin{equation}\label{eq:L:2}
  \LL[h](x)=h(x)+\frac{2(x+1)\ee^{-x}}{x^2}\int_{0}^{\infty}h(z)\dz-\frac{2(x+1)\ee^{-x}}{x^2}\int_{0}^{x}\ee^{z}h(z)\dz-\frac{2}{x^2}\int_{x}^{\infty}h(z)\dz.
 \end{equation}
 Moreover, we derive a further representation of $\LL$ which will allow to exploit some cancellation at zero. Precisely, splitting the integral $\int_{0}^{x}h(z)\dz=\int_{0}^{x}h(z)\dz+\int_{x}^{\infty}h(z)\dz$ we find
 \begin{equation}\label{eq:L:4}
  \LL[h](x)=h(x)+\frac{2(x+1)\ee^{-x}}{x^2}\int_{0}^{x}(1-\ee^{z})h(z)\dz+\frac{2(x\ee^{-x}+\ee^{-x}-1)}{x^2}\int_{x}^{\infty}h(z)\dz.
 \end{equation}
 
\subsection{Continuity of $\LL^{-1}$}\label{Sec:inv:cont}
 
 Recalling from~\eqref{eq:mass:m1} that $\int_{0}^{\infty}x\hs{1}(x)\dx=-1$, we notice that $\A_{0}$ has explicitly been constructed such that $\int_{0}^{\infty}x\A_{0}[g](x)\dx=0$ whenever the first moment exists. Thus, to prove that $\A_{0}\colon \X{a}{b}\to \X[0]{a}{b}$ is well-defined and continuous for $a\in(-1,1)$ and $b>1$, it suffices to show the continuity.
 Recalling Remark~\ref{Rem:explicit:profile} and~\eqref{eq:weight:monotonicity} together with $a<1$ and $b>1$ we find
 \begin{equation*}
  \norm*{\int_{0}^{\infty}y\A[g](y)\dy \hs{1}}_{\X{a}{b}}\lesssim \int_{0}^{\infty}y\abs*{\A[g](y)}\dy\lesssim \int_{0}^{\infty}\abs*{\A[g](y)}\weight{a}{b}(y)\dy=\norm*{\A[g]}_{\X{a}{b}}.
 \end{equation*}
Thus, it suffices to show that $\norm*{\A[g]}_{\X{a}{b}}\lesssim\norm*{g}_{\X{a}{b}}$. Due to the explicit form of $\A[g]$ given in~\eqref{eq:def:pre:inverse} we observe, that it is in fact enough to consider only the expressions
\begin{equation}\label{eq:cont:LL:inv:0}
 \hs{1}(x)\int_{1}^{x}\biggl(\frac{\ee^{y}}{y}-\int_{1}^{y}\frac{\ee^{z}}{z}\dz\biggr)g(y)\dy\qquad \text{and}\qquad \hs{2}(x)\int_{x}^{\infty}g(y)\dy
\end{equation}
which we will study separately. Starting with the second one, the definition of the norm and Fubini's theorem imply
\begin{equation*}
 \norm*{\hs{2}\int_{\cdot}^{\infty}g(y)\dy}_{\X{a}{b}}=\int_{0}^{\infty}\abs{g(y)}\int_{0}^{y}\abs{\hs{2}(x)}\weight{a}{b}(x)\dx\dy.
\end{equation*}
Recalling~\eqref{eq:prim:m2:est:2} together with the assumption $b>1$ further yields
\begin{equation}\label{eq:cont:LL:inv:1}
 \norm*{\hs{2}\int_{\cdot}^{\infty}g(y)\dy}_{\X{a}{b}}\lesssim \int_{0}^{\infty}\abs{g(y)}\weight{a}{b-1}(y)\dy=\norm{g}_{\X{a}{b-1}}.
\end{equation}
Similarly, for the first expression in~\eqref{eq:cont:LL:inv:0} we use the definition of $\norm{\cdot}_{\X{a}{b}}$, split the integral $\int_{0}^{\infty}(\cdots)\dx=\int_{0}^{1}(\cdots)\dx+\int_{1}^{\infty}(\cdots)\dx$ and apply Fubini's theorem which gives
\begin{multline*}
 \norm*{\hs{1}\int_{1}^{\cdot}\biggl(\frac{\ee^{y}}{y}-\int_{1}^{y}\frac{\ee^{z}}{z}\dz\biggr)g(y)\dy}_{\X{a}{b}}\\*
 \leq \int_{0}^{1}\abs*{\frac{\ee^{y}}{y}-\int_{1}^{y}\frac{\ee^{z}}{z}\dz}\abs{g(y)}\int_{0}^{y}\abs{\hs{1}(x)}\weight{a}{b}(x)\dx\dy\\*
 +\int_{1}^{\infty}\abs*{\frac{\ee^{y}}{y}-\int_{1}^{y}\frac{\ee^{z}}{z}\dz}\abs{g(y)}\int_{y}^{\infty}\abs{\hs{1}(x)}\weight{a}{b}(x)\dx\dy.
\end{multline*}
Recalling \cref{eq:prim:m1:est:1,eq:prim:m1:est:2,eq:est:aux:expr} we can further estimate
\begin{multline}\label{eq:cont:LL:inv:2}
 \norm*{\hs{1}\int_{1}^{\cdot}\biggl(\frac{\ee^{y}}{y}-\int_{1}^{y}\frac{\ee^{z}}{z}\dz\biggr)g(y)\dy}_{\X{a}{b}}\lesssim \int_{0}^{1}y^{-1}\abs{g(y)}y^{1+a}\dy+\int_{1}^{\infty}\frac{\ee^{y}}{y^2}\abs{g(y)}y^{b+1}\ee^{-y}\dy\\*
 =\int_{0}^{1}y^{a}\abs{g(y)}\dy+\int_{1}^{\infty}y^{b-1}\abs{g(y)}\dy=\norm{g}_{\X{a}{b-1}}.
\end{multline}
Thus, combining \cref{eq:cont:LL:inv:1,eq:cont:LL:inv:2} the claimed continuity follows together with~\eqref{eq:spaces:embedding}.

\subsection{$\LL$ is injective}\label{Sec:lin:inj}

In this subsection, we will show that $\ker \LL\cap \X[0]{a}{b}=\{0\}$, i.e.\@ b$\LL$ is injective on $\X[0]{a}{b}$. Alternatively, the latter result could also be obtained by explicitly evaluating the composition $\A_{0}\circ \LL$. In fact, a calculation similar to that one in Section~\ref{Sec:lin:surj} below shows that this expression equals $\id$ on $\X[0]{a}{b}$. However, since this leads to very long formulas, we will follow another shorter approach here. In fact, we prove that $\ker\LL=\{C\hs{1}\;|\; C\in \R\}$ using the (desingularised) Laplace transform. More precisely, one immediately checks that $\LL[\hs{1}]=0$, i.e.\@ $\hs{1} \in\ker\LL$. In fact using the expression~\eqref{eq:L:2} and noting that $\int_{0}^{\infty}\hs{1}(x)\dx=0$ as well as $\hs{1}(x)=(x\ee^{-x})'$ we find
\begin{multline}\label{eq:m1:kernel}
 \LL[\hs{1}](x)=(1-x)\ee^{-x}-\frac{2(x+1)\ee^{-x}}{x^2}\int_{0}^{x}(1-z)\dz-\frac{2}{x^2}\int_{x}^{\infty}(x\ee^{-x})'\dz\\*
 =(1-x)\ee^{-x}-\frac{(2-x)(x+1)\ee^{-x}}{x}+2\frac{\ee^{-x}}{x}=0.
\end{multline}
To see that $\dim (\ker\LL)=1$, let $g\in \X{a}{b}$ solve $\LL[g]=0$. Then we can define the desingularised Laplace transform $\T[g](q)\vcc=\int_{0}^{\infty}(1-\ee^{-qx})g(x)\dx$. It is well-known that $\T[g]$ is differentiable for all $q\in (0,\infty)$ and from~\cite[eq.\@ (27)]{NiV14a} one deduces that $\T[g]$ has to satisfy the ODE
\begin{equation}\label{eq:ODE:kernel:Laplace}
 \frac{\dd}{\dd{q}}\T[g](q)+\frac{q-1}{q(q+1)}\T[g]=0.
\end{equation}
One now easily checks that the general solution to~\eqref{eq:ODE:kernel:Laplace} is given by $\T[g](q)=C\frac{q}{(q+1)^2}$ parametrised by the constant $C$. On the other hand, one immediately verifies that $\T[g](q)$ is exactly the desingularised Laplace transform of the function $-C\hs{1}$. Since a function in $\X{a}{b}$ with $a\in(-1,1)$ and $b>1$ (or more generally a finite measure) is uniquely determined by its desingularised Laplace transform (see \cite{SSV10}) this shows that the operator $\LL$ has a one-dimensional kernel which is spanned by the function $\hs{1}$. On the other hand, $\int_{0}^{\infty}x\hs{1}(x)\dx=-1$ which thus shows that $\ker \LL\cap \X[0]{a}{b}=\{0\}$. Consequently, $\LL$ is injective on $\X[0]{a}{b}$.

\subsection{$\LL$ is surjective}\label{Sec:lin:surj}

To conclude the proof of Proposition~\ref{Prop:continuity:L:inverse} it only remains to show that $\LL$ maps $\X[0]{a}{b}$ onto $\X{a}{b}$. We will prove this, by verifying that $\LL\circ \A_{0}=\id$ on $\X{a}{b}$. Moreover, since during the following calculations we frequently integrate by parts, we note that it suffices to verify this relation on the dense subspace $C_{\text{c}}^{\infty}(0,\infty)\subset \X{a}{b}$ of smooth functions which are compactly supported on $(0,\infty)$. In fact, due to the continuity of $\LL$ and $\A_{0}$ provided by \cref{Prop:continuity:L:NEW,Sec:inv:cont} the equality on $\X{a}{b}$ then follows by approximation.

To simplify the notation let us introduce the abbreviation 
\begin{equation}\label{eq:def:E}
 E(x)\vcc=\frac{\ee^{x}}{x}-\int_{1}^{x}\frac{\ee^{z}}{z}\dz.
\end{equation}
Moreover, we note that $\hs{1}(x)=(x\ee^{-x})'$ and we recall $\LL[\hs{1}]=0$ from Section~\ref{Sec:lin:inj}. Then, for $g\in C_{\text{c}}^{\infty}(0,\infty)$ given we obtain with~\eqref{eq:L:4} that
\begin{multline*}
 \bigl(\LL\circ \A_{0}[g]\bigr)(x)=\A[g]+\frac{2(x+1)\ee^{-x}}{x^2}\int_{0}^{x}(1-\ee^{z})g(z)\dz+\frac{2(x+1)\ee^{-x}-2}{x^2}\int_{x}^{\infty}g(z)\dz\\*
 \shoveleft{+\frac{4(x+1)\ee^{-x}}{x^2}\biggl[\int_{0}^{x}\del_{\xi}\biggl(\int_{0}^{\xi}(1-\ee^{\eta})\hs{1}(\eta)\deta\biggr)\int_{1}^{\xi}E(y)g(y)\dy\dxi}\\*
 \shoveright{-\int_{0}^{x}\del_{\xi}\biggl(\int_{0}^{\xi}(1-\ee^{\eta})\hs{2}(\eta)\deta\biggr)\int_{\xi}^{\infty}g(y)\dy\biggr]}\\*
 +\frac{4(x+1)\ee^{-x}-4}{x^2}\biggl[\int_{x}^{\infty}(\xi\ee^{-\xi})'\int_{1}^{\xi}E(y)g(y)\dy\dxi-\int_{x}^{\infty}\del_{\xi}\biggl(\int_{0}^{\xi}\hs{2}(\eta)\deta\biggr)\int_{\xi}^{\infty}g(y)\dy\dxi\biggr].
\end{multline*}
Integration by parts in the last four integrals on the right-hand side and plugging in the formula for $\A[g]$ from~\eqref{eq:def:pre:inverse} yields
\begin{multline*}
 \bigl(\LL\circ \A_{0}[g]\bigr)(x)=g(x)+2\hs{1}(x)\int_{1}^{x}E(y)g(y)\dy-2\hs{2}(x)\int_{x}^{\infty}g(y)\dy\\*
 +\frac{2(x+1)\ee^{-x}}{x^2}\int_{0}^{x}(1-\ee^{z})g(z)\dz+\frac{2(x+1)\ee^{-x}-2}{x^2}\int_{x}^{\infty}g(z)\dz\\*
 +\frac{4(x+1)\ee^{-x}}{x^2}\biggl[\int_{0}^{x}(1-\ee^{\eta})\hs{1}(\eta)\deta\int_{1}^{x}E(y)g(y)\dy-\int_{0}^{x}\int_{0}^{\xi}(1-\ee^{\eta})\hs{1}(\eta)\deta E(\xi)g(\xi)\dxi\\*
 \shoveright{-\int_{0}^{x}(1-\ee^{\eta})\hs{2}(\eta)\deta\int_{x}^{\infty}g(y)\dy-\int_{0}^{x}\int_{0}^{\xi}(1-\ee^{\eta})\hs{2}(\eta)\deta g(\xi)\dxi\biggr]}\\*
 \shoveleft{+\frac{4(x+1)\ee^{-x}-4}{x^2}\biggl[-x\ee^{-x}\int_{1}^{x}E(y)g(y)\dy-\int_{x}^{\infty}\xi\ee^{-\xi}E(\xi)g(\xi)\dxi}\\*
 +\int_{0}^{x}\hs{2}(\eta)\deta\int_{x}^{\infty}g(y)\dy-\int_{x}^{\infty}\int_{0}^{\xi}\hs{2}(\eta)\deta g(\xi)\dxi\biggr].
\end{multline*}
Collecting pre-factors of the same integrals and also exploiting certain cancellations, we further find
\begin{multline}\label{eq:L:surj:1}
 \bigl(\LL\circ \A_{0}[g]\bigr)(x)=g(x)\\*
 +\biggl[2\hs{1}(x)+\frac{4(x+1)\ee^{-x}}{x^2}\int_{0}^{x}(1-\ee^{\eta})\hs{1}(\eta)\deta-\frac{4(x+1)\ee^{-2x}-4\ee^{-x}}{x}\biggr]\int_{1}^{x}E(y)g(y)\dy\\*
 +\biggl[-2\hs{2}(x)+\frac{2(x+1)\ee^{-x}-2}{x^2}+\frac{4(x+1)\ee^{-x}}{x^2}\int_{0}^{x}\ee^{\eta}\hs{2}(\eta)\deta-\frac{4}{x^2}\int_{0}^{x}\hs{2}(\eta)\deta\biggr]\int_{x}^{\infty}g(y)\dy\\*
 +\frac{2(x+1)\ee^{-x}}{x^2}\int_{0}^{x}\biggl(1-\ee^{\xi}-2\int_{0}^{\xi}(1-\ee^{\eta})\hs{1}(\eta)\deta E(\xi)-2\int_{0}^{\xi}(1-\ee^{\eta})\hs{2}(\eta)\deta\biggr)g(\xi)\dxi\\*
 +\frac{4(x+1)\ee^{-x}-4}{x^2}\int_{x}^{\infty}\biggl(-\xi\ee^{-\xi}E(\xi)-\int_{0}^{\xi}\hs{2}(\eta)\deta\biggr)g(\xi)\dxi.
\end{multline}
To simplify the presentation, we consider the terms in brackets separately. Together with~\eqref{eq:prim:m1} we first obtain
\begin{multline}\label{eq:L:surj:2}
 2\hs{1}(x)+\frac{4(x+1)\ee^{-x}}{x^2}\int_{0}^{x}(1-\ee^{\eta})\hs{1}(\eta)\deta-\frac{4(x+1)\ee^{-2x}-4\ee^{-x}}{x}\\*
 =2(1-x)\ee^{-x}+\frac{4(x+1)\ee^{-2x}}{x}-\frac{4(x+1)\ee^{-x}}{x}+2(x+1)\ee^{-x}-\frac{4(x+1)\ee^{-2x}}{x}+\frac{4\ee^{-x}}{x}=0.
\end{multline}
Moreover, \cref{eq:prim:m2:1,eq:prim:m2:2} yield
\begin{multline}\label{eq:L:surj:3}
 -2\hs{2}(x)+\frac{2(x+1)\ee^{-x}-2}{x^2}+\frac{4(x+1)\ee^{-x}}{x^2}\int_{0}^{x}\ee^{\eta}\hs{2}(\eta)\deta-\frac{4}{x^2}\int_{0}^{x}\hs{2}(\eta)\deta\\*
 =-2-2(1-x)\ee^{-x}\int_{1}^{x}\frac{\ee^{z}}{z}\dz+\frac{2(x+1)\ee^{-x}-2}{x^2}+\frac{2(x+1)(2-x)\ee^{-x}}{x}\int_{1}^{x}\frac{\ee^{z}}{z}\dz\\*
 +\frac{2(x^2-1)+2(x+1)\ee^{-x}}{x^2}-\frac{4\ee^{-x}}{x}\int_{1}^{x}\frac{\ee^{z}}{z}\dz=\frac{4(x+1)\ee^{-x}-4}{x^2}.
\end{multline}
Similarly, by means of \cref{eq:def:E,eq:prim:m1,eq:prim:m2:1,eq:prim:m2:2} we get
\begin{multline}\label{eq:L:surj:4}
 1-\ee^{\xi}-2\int_{0}^{\xi}(1-\ee^{\eta})\hs{1}(\eta)\deta E(\xi)-2\int_{0}^{\xi}(1-\ee^{\eta})\hs{2}(\eta)\deta\\*
 \shoveleft{=1-\ee^{\xi}-2\xi\Bigl(\ee^{-\xi}-1+\frac{\xi}{2}\Bigr)\biggl(\frac{\ee^{\xi}}{\xi}-\int_{1}^{\xi}\frac{\ee^{z}}{z}\dz\biggr)-2\xi\ee^{-\xi}\int_{1}^{\xi}\frac{\ee^{z}}{z}\dz}\\*
 +\xi(2-\xi)\int_{1}^{\xi}\frac{\ee^{z}}{z}\dz+(\xi-1)\ee^{\xi}+1=0.
\end{multline}
Finally, \cref{eq:def:E,eq:prim:m2:1} imply
\begin{equation}\label{eq:L:surj:5}
 -\xi\ee^{-\xi}E(\xi)-\int_{0}^{\xi}\hs{2}(\eta)\deta=-1+\xi\ee^{-\xi}\int_{1}^{\xi}\frac{\ee^{z}}{z}\dz-\xi\ee^{-\xi}\int_{1}^{\xi}\frac{\ee^{z}}{z}=-1.
\end{equation}
Thus, summarising \cref{eq:L:surj:1,eq:L:surj:2,eq:L:surj:3,eq:L:surj:4,eq:L:surj:5} we find
\begin{equation*}
 \bigl(\LL\circ \A_{0}[g]\bigr)(x)=g(x)+\frac{4(x+1)\ee^{-x}-4}{x^2}\int_{x}^{\infty}g(y)\dy-\frac{4(x+1)\ee^{-x}-4}{x^2}\int_{x}^{\infty}g(y)\dy=g.
\end{equation*}
This shows that $\LL\circ \A_{0}=\id$ on $C_{\text{c}}^{\infty}(0,\infty)$ and by density, the same relation thus holds true on $\X{a}{b}$ for $a\in(-1,1)$ and $b>1$.

\section*{Acknowledgements}

This work has been funded by the Deutsche Forschungsgemeinschaft (DFG, German Research Foundation) – Projektnummer 396845724. 

The author thanks Jos{\'e} Ca{\~n}izo and Juan Vel{\'a}zquez for helpful conversation on the linearised operator.

\appendix

\section{Elementary properties of the weights and certain auxiliary functions}\label{Sec:properties:weight}

We collect in this section several properties and estimates on the weights $\weight{a}{b}$ as well as on the functions $\hs{1}$ and $\hs{2}$ which appear in the formula for $\LL^{-1}$. 

\subsection{Estimates on the weight $\weight{a}{b}$}

Let us first mention the following estimates on certain primitives of $\weight{a}{b}$:

\begin{align}
 \int_{0}^{x}\weight{a}{b}(y)\ee^{-y}\dy&\lesssim \weight{1+a}{0}(x) \qquad  \text{for } a>-1\text{ and all }b\in\R. \label{eq:prim:weight:1}\\
 \int_{0}^{x}\weight{a}{b}(y)\dy&\lesssim \begin{cases}
                                          \weight{a+1}{0}(x) &\text{if }b<-1\\
                                          \weight{a+1}{b+1}(x) & \text{if }b>-1.
                                         \end{cases} \qquad \text{and }a>-1
\label{eq:prim:weight:2}\\
\int_{x}^{\infty}\weight{a}{b}(y)\ee^{-y}\dy&\lesssim \begin{cases}
                                                       \weight{0}{b}(x)\ee^{-x} &\text{if }a>-1\\
                                                       \weight{a+1}{b}(x)\ee^{-x} & \text{if }a<-1.
                                                      \end{cases}\qquad \text{and all }b\in \R
\label{eq:prim:weight:3}
\end{align}
These bounds follow immediately from the definition of $\weight{a}{b}$ and we thus omit the proofs.

The following lemma gives an estimate on an auxiliary integral which appears during the proof of continuity of $\B_{2}$ and $\B_{W}$ in Section~\ref{Sec:preparation:uniqueness}.

\begin{lemma}\label{Lem:aux:int:weight:1}
 For all $a<1$ and $b\in(1,2)$ we have the estimate
 \begin{equation*}
  \int_{y}^{y+z}\frac{\weight{a}{b}(x)}{x^2}\dx\lesssim \weight{a-1}{0}(y)\weight{0}{b-1}(z)\qquad \text{for all }y,z\in(0,\infty).
 \end{equation*}
\end{lemma}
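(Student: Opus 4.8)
The plan is to estimate the integral $\int_{y}^{y+z}\weight{a}{b}(x)/x^{2}\,\dx$ by splitting the range of integration according to whether the integration variable $x$ is below or above $1$, since the weight $\weight{a}{b}$ changes its definition there. Concretely, I would distinguish the three cases: (i) $y+z\leq 1$, so that $x\leq 1$ throughout and $\weight{a}{b}(x)=x^{a}$; (ii) $y\geq 1$, so that $x\geq 1$ throughout and $\weight{a}{b}(x)=x^{b}$; and (iii) $y<1<y+z$, where the integral splits as $\int_{y}^{1}x^{a-2}\,\dx+\int_{1}^{y+z}x^{b-2}\,\dx$.

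In case (i), using $a<1$ we compute $\int_{y}^{y+z}x^{a-2}\,\dx\leq \int_{y}^{\infty}x^{a-2}\,\dx=\tfrac{1}{1-a}y^{a-1}$; since here $y\leq 1$ and $z\leq 1$ we have $\weight{a-1}{0}(y)=y^{a-1}$ and $\weight{0}{b-1}(z)=1$, so the claimed bound holds. In case (ii), using $b<2$ we get $\int_{y}^{y+z}x^{b-2}\,\dx\leq \int_{y}^{y+z}x^{b-2}\,\dx$; one can bound this crudely by $\int_{0}^{y+z}x^{b-2}\,\dx$ only if $b>1$ (which is assumed), giving $\tfrac{1}{b-1}(y+z)^{b-1}$, and then use $(y+z)^{b-1}\leq 2^{b-1}\max\{y,z\}^{b-1}\leq 2^{b-1}(y^{b-1}+z^{b-1})$. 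Since $y\geq 1$ here, $\weight{a-1}{0}(y)=1$ and we need $y^{b-1}+z^{b-1}\lesssim \weight{0}{b-1}(z)$ when $y\geq 1$; this is not immediately true if $y$ is large, so a sharper argument is needed: instead bound $\int_{y}^{y+z}x^{b-2}\,\dx\leq y^{b-2}\cdot z$ and separately $\int_{y}^{y+z}x^{b-2}\,\dx\leq \int_{0}^{y+z}x^{b-2}\dx \lesssim (y+z)^{b-1}$, then interpolate: if $z\leq y$ use the first bound to get $y^{b-2}z\leq z$ (as $b\leq 2$ and $y\geq 1$), matching $\weight{0}{b-1}(z)\geq z$ only for $z\geq 1$, so further one notes for $z\leq 1$, $z\le z^{b-1}$ fails too — hence the cleanest route is to directly verify $y^{b-2}z\le \weight{a-1}{0}(y)\weight{0}{b-1}(z)$ and $(y+z)^{b-1}\le 2^{b-1}(\ldots)$ by elementary case analysis on the sizes of $y$ and $z$, which is routine.

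In case (iii) the two pieces are handled as in (i) and (ii) respectively: $\int_{y}^{1}x^{a-2}\,\dx\leq \tfrac{1}{1-a}y^{a-1}=\tfrac{1}{1-a}\weight{a-1}{0}(y)$ since $y<1$, and $\int_{1}^{y+z}x^{b-2}\,\dx\leq \tfrac{1}{b-1}(y+z)^{b-1}$; because $y<1$ we have $y+z<1+z\leq 2\max\{1,z\}$, so $(y+z)^{b-1}\lesssim \weight{0}{b-1}(z)$, and $\weight{a-1}{0}(y)\geq 1$ in this regime absorbs the first factor. Combining the three cases and using the elementary monotonicity properties of the weight (e.g.\ \eqref{eq:weight:monotonicity}) to unify the bounds gives the claim. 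The main obstacle is purely bookkeeping: making sure the crude bounds $y^{b-2}z$ versus $(y+z)^{b-1}$ combine correctly in case (ii) to reproduce exactly $\weight{a-1}{0}(y)\weight{0}{b-1}(z)$ across all size regimes of $y$ and $z$; there is no conceptual difficulty, only the need to pick the right elementary estimate in each subregion.
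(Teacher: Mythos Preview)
Your decomposition into the three cases $y+z\leq 1$, $y<1<y+z$, and $y\geq 1$ is exactly the one the paper uses, and cases (i) and (iii) match the paper's argument closely. The difference lies in case (ii): the paper observes that since $b-1\in(0,1)$, the map $t\mapsto t^{b-1}$ is H\"older continuous with exponent $b-1$, so
\[
 \int_{y}^{y+z}x^{b-2}\,\dx=\tfrac{1}{b-1}\bigl((y+z)^{b-1}-y^{b-1}\bigr)\lesssim z^{b-1},
\]
which immediately gives the desired bound (and the same trick handles the second piece in case (iii) via $(1+z)^{b-1}-1\lesssim z^{b-1}$). This one-line observation replaces your sub-case analysis on the relative sizes of $y$ and $z$, which---while it can be pushed through---is where your write-up becomes muddled (e.g.\ your claim ``$\weight{0}{b-1}(z)\geq z$ only for $z\geq 1$'' is backwards: for $z\leq 1$ one has $\weight{0}{b-1}(z)=1\geq z$). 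The paper also streamlines the final step by first proving the additive bound $\int_{y}^{y+z}\weight{a}{b}(x)/x^{2}\,\dx\lesssim \weight{a-1}{0}(y)+\weight{b-1}{b-1}(z)$ uniformly over all three cases, and only then converting to the product form via $1=\weight{0}{0}$ and~\eqref{eq:weight:monotonicity}; going directly for the product in each case, as you attempt, is what forces the extra bookkeeping.
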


\begin{proof}
 We have to distinguish several cases: first, if $y\leq 1$ and $y+z\leq 1$ we use $a<1$ to estimate
 \begin{equation}\label{eq:aux:int:weight:1:proof:1}
  \int_{y}^{y+z}\frac{\weight{a}{b}(x)}{x^2}\dx=\int_{y}^{y+z}x^{a-2}\dx\leq \int_{y}^{\infty}x^{a-2}\dx\lesssim y^{a-1}.
 \end{equation}
 Second, if $y\leq 1$ but $y+z\geq 1$ and exploiting also that $x\mapsto x^{b-1}$ is Hölder continuous with exponent $b-1$ since $b\in(1,2)$ we get
 \begin{multline}\label{eq:aux:int:weight:1:proof:2}
  \int_{y}^{y+z}\frac{\weight{a}{b}(x)}{x^2}\dx=\int_{y}^{1}x^{a-2}\dx+\int_{1}^{y+z}x^{b-2}\dx\leq \int_{y}^{\infty}x^{a-2}\dx+\int_{1}^{1+z}x^{b-2}\dx\\*
  \lesssim y^{a-1}+(1+z)^{b-1}-1\lesssim y^{a-1}+z^{b-1}.
 \end{multline}
 Finally, if $y\geq 1$ we get
 \begin{equation}\label{eq:aux:int:weight:1:proof:3}
  \int_{y}^{y+z}\frac{\weight{a}{b}(x)}{x^2}\dx=\int_{y}^{y+z}x^{b-2}\dx\lesssim (y+z)^{b-1}-y^{b-1}\lesssim z^{b-1}.
 \end{equation}
Combining \cref{eq:aux:int:weight:1:proof:1,eq:aux:int:weight:1:proof:2,eq:aux:int:weight:1:proof:3} we can obtain the estimate
\begin{equation*}
 \int_{y}^{y+z}\frac{\weight{a}{b}(x)}{x^2}\dx\lesssim \weight{a-1}{0}(y)+\weight{b-1}{b-1}(z).
\end{equation*}
 Noting that $1=\weight{0}{0}$ and exploiting~\eqref{eq:weight:monotonicity} we may deduce
 \begin{equation*}
 \int_{y}^{y+z}\frac{\weight{a}{b}(x)}{x^2}\dx\lesssim \weight{a-1}{0}(y)\weight{0}{b-1}(z)+\weight{a-1}{0}(y)\weight{0}{b-1}(z)\lesssim\weight{a-1}{0}(y)\weight{0}{b-1}(z).
\end{equation*}
This finishes the proof.
\end{proof}

\begin{remark}
 Note that the estimate in the previous lemma is not optimal, in the sense that we could in fact obtain some regularising behaviour for small $z$. However, this estimate suffices for our purpose.
\end{remark}

Finally, we show the following elementary lemma which gives a certain regularising effect of $1-\ee^{-x}$.

\begin{lemma}\label{Lem:norm:regularising}
 For all $a,b\in\R$ we have the estimate
 \begin{equation*}
  \norm*{\bigl(1-\exp(-\cdot)\bigr)g}_{\X{a}{b}}\leq \norm{g}_{\X{1+a}{b}},
 \end{equation*}
i.e.\@ the multiplication operator given by $g(x)\mapsto (1-\ee^{-x})g(x)$ maps continuously from $\X{1+a}{b}$ to $\X{a}{b}$. Due to~\eqref{eq:spaces:embedding} we have in particular that $\norm*{\bigl(1-\exp(-\cdot)\bigr)g}_{\X{a}{b}}\leq \norm{g}_{\X{a}{b}}$.
\end{lemma}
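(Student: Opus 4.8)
The plan is to reduce the statement to the elementary pointwise weight bound~\eqref{eq:weight:regularising}. First I would unfold the definition of the norm on $\X{a}{b}$, writing
\[
 \norm*{\bigl(1-\exp(-\cdot)\bigr)g}_{\X{a}{b}}=\int_{0}^{\infty}\abs{g(x)}\,(1-\ee^{-x})\,\weight{a}{b}(x)\dx .
\]
Next I would invoke~\eqref{eq:weight:regularising}, i.e.\@ $(1-\ee^{-x})\weight{a}{b}(x)\leq\weight{a+1}{b}(x)$ for all $x>0$, to estimate the integrand pointwise. Substituting this bound and recognising the resulting integral as the defining expression for $\norm{g}_{\X{1+a}{b}}$ immediately gives the first claimed inequality. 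The same computation shows that $(1-\ee^{-\cdot})g$ is measurable with finite $\X{a}{b}$-norm whenever $g\in\X{1+a}{b}$, so the multiplication operator is well-defined and continuous as asserted.

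For the "in particular" part I would combine the inequality just obtained with the embedding $\X{a}{b}\subseteq\X{1+a}{b}$. This embedding follows from~\eqref{eq:spaces:embedding}; equivalently, from~\eqref{eq:weight:monotonicity} one has $\weight{1+a}{b}(x)\leq\weight{a}{b}(x)$ (since $a\leq 1+a$ and $b\leq b$), whence $\norm{g}_{\X{1+a}{b}}\leq\norm{g}_{\X{a}{b}}$, and chaining the two estimates yields $\norm*{\bigl(1-\exp(-\cdot)\bigr)g}_{\X{a}{b}}\leq\norm{g}_{\X{a}{b}}$.

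There is no real obstacle here: the whole content is the elementary fact $1-\ee^{-x}\leq\min\{x,1\}$, already recorded in~\eqref{eq:weight:regularising}, and the rest is bookkeeping with the definition of the weighted norm and the embeddings from~\eqref{eq:spaces:embedding}.
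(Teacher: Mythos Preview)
Your proposal is correct and follows exactly the same approach as the paper: unfold the norm, apply the pointwise bound~\eqref{eq:weight:regularising}, and recognise the result as $\norm{g}_{\X{1+a}{b}}$; the ``in particular'' part then follows from the embedding~\eqref{eq:spaces:embedding}. Your write-up simply makes the embedding step a bit more explicit than the paper does.
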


\begin{proof}
 The proof follows immediately from the definition and~\eqref{eq:weight:regularising}. In fact
 \begin{equation*}
  \norm*{\bigl(1-\exp(-\cdot)\bigr)g}_{\X{a}{b}}=\int_{0}^{\infty}\abs{g(x)}(1-\ee^{-x})\weight{a}{b}(x)\dx\leq \int_{0}^{\infty}\abs{g(x)}\weight{a+1}{b}(x)\dx=\norm{g}_{\X{a+1}{b}}.
 \end{equation*}
\end{proof}

\subsection{Properties of $\hs{1}$ and $\hs{2}$}

We provide in this section several properties and estimates on the functions $\hs{1}$ and $\hs{2}$ which are mainly used to simplify the proofs of \cref{Prop:continuity:L:NEW,Prop:continuity:L:inverse}. We recall from~\eqref{eq:m1:m2} that 
\begin{equation*}
 \hs{1}(x)=(1-x)\ee^{-x}\qquad \text{and}\qquad \hs{2}(x)=1+(1-x)\ee^{-x}\int_{1}^{x}\frac{\ee^{z}}{z}\dz.
\end{equation*}
First, we immediately obtain the estimate
\begin{equation}\label{eq:est:m1}
 \abs*{\hs{1}(x)}\lesssim \begin{cases}
                          1 &\text{if }x\leq 1\\
                          x\ee^{-x} &\text{if }x\geq 1
                         \end{cases}
\qquad \text{or equivalently}\qquad \abs*{\hs{1}(x)}\lesssim \weight{0}{1}(x)\ee^{-x}.
\end{equation}
\begin{remark}\label{Rem:explicit:profile}
 As a direct consequence of~\eqref{eq:est:m1} we see that $\hs{1}$ has exponential decay at infinity and thus all moments of non-negative order are well-defined.  In particular $\hs{1},\exp(-\cdot)\in\X{a}{b}$ for all $a>-1$ and $b\in\R$.
\end{remark}
One also immediately checks by an explicit calculation that
\begin{equation}\label{eq:mass:m1}
 \int_{0}^{\infty}x\hs{1}(x)\dx=-1.
\end{equation}
Conversely, the function $\hs{2}$ has a worse behaviour especially at infinity. However, there is also some cancellation taking place in the stated formula if $x$ is large. Therefore, before stating a bound on $\hs{2}$ let us make some preliminary considerations. In fact, we first look at the auxiliary integral $\int_{1}^{x}\frac{\ee^{z}}{z}\dz$. Using l'H{\^o}pital's rule it is immediate to check that
\begin{equation}\label{eq:est:Ei}
 \int_{1}^{x}\frac{\ee^{z}}{z^n}\dz\lesssim \frac{\ee^{x}}{x^n}\quad \text{for } n\in\N_{0} \text{ if }x\geq 1 \qquad \text{as well as}\qquad \int_{1}^{x}\frac{\ee^{z}}{z}\dz\lesssim \abs*{\log(x)} \quad \text{for }x\leq 1.
\end{equation}
To obtain the precise decay behaviour of $\hs{2}$ we write $\ee^{z}=\del_{z}^{2}(\ee^{z})$ and integrate by parts twice which leads to
\begin{multline*}
  \hs{2}(x)=1+\frac{1-x}{x}+\frac{1-x}{x^2}-2\ee(1-x)\ee^{-x}+2(1-x)\ee^{-x}\int_{1}^{x}\frac{\ee^{z}}{z^3}\dz\\*
 =\frac{1}{x^2}-2\ee(1-x)\ee^{-x}+2(1-x)\ee^{-x}\int_{1}^{x}\frac{\ee^{z}}{z^3}\dz.
\end{multline*}
Thus, together with~\eqref{eq:est:Ei} and the explicit form of $\hs{2}$ one immediately verifies that
\begin{equation}\label{eq:est:m2}
 \abs*{\hs{2}(x)}\lesssim \begin{cases}
                          1+\abs{\log(x)} & \text{if } x\leq 1\\
                          \frac{1}{x^2} &\text{if }x\geq 1.
                         \end{cases}
\qquad \text{as well as} \qquad \hs{2}(x)\sim x^{-2} \text{ as }x\to\infty.                         
\end{equation}

For the proof of Proposition~\ref{Prop:continuity:L:inverse} one also needs an estimate for the expression $\frac{\ee^{x}}{x}-\int_{1}^{x}\frac{\ee^{z}}{z}\dz$. Integrating by parts we obtain similarly as above that
\begin{equation*}
 \frac{\ee^{x}}{x}-\int_{1}^{x}\frac{\ee^{z}}{z}\dz=\ee-\int_{1}^{x}\frac{\ee^{z}}{z^2}\dz.
\end{equation*}
Thus, together with~\eqref{eq:est:Ei} one immediately deduces
\begin{equation}\label{eq:est:aux:expr}
 \abs*{\frac{\ee^{x}}{x}-\int_{1}^{x}\frac{\ee^{z}}{z}\dz}\lesssim \begin{cases}
                                                                    \frac{1}{x} &\text{if }x\leq 1\\
                                                                    \frac{\ee^{x}}{x^2} &\text{if }x\geq 1
                                                                   \end{cases}
\quad \text{or equivalently}\quad \abs*{\frac{\ee^{x}}{x}-\int_{1}^{x}\frac{\ee^{z}}{z}\dz}\lesssim \weight{-1}{-2}(x)\ee^{x}.
\end{equation}
Based on the considerations above, we collect several estimates on (weighted) primitives of $\hs{1}$ and $\hs{2}$. Precisely, for $x\in(0,\infty)$ we have
\begin{align}
 \int_{0}^{x}\abs*{\hs{1}(y)}\weight{a}{b}(y)\dy&\lesssim \weight{a+1}{0}(x) && \text{for all }a> -1\text{ and } b\in\R \label{eq:prim:m1:est:1}\\
 \int_{x}^{\infty}\abs*{\hs{1}(y)}\weight{a}{b}(y)\dy&\lesssim \weight{0}{b+1}(x)\ee^{-x} && \text{for all }a> -1\text{ and } b\in\R. \label{eq:prim:m1:est:2}
\end{align}
Moreover, $\hs{2}$ satisfies the estimate
\begin{equation}\label{eq:prim:m2:est:1}
 \int_{0}^{x}\abs*{\hs{2}(y)}\weight{a}{b}(y)\dy\lesssim \begin{cases}
                                                         x^{a+1}\bigl(1+\abs*{\log(x)}\bigr) &\text{if }x\leq 1\\
                                                         1+x^{b-1} &\text{if }x\geq 1
                                                        \end{cases}
\qquad \text{for all } a>-1 \text{ and } b\neq -1.
\end{equation}
From this, we can deduce in particular for $a>-1$ that
\begin{equation}\label{eq:prim:m2:est:2}
 \int_{0}^{x}\abs*{\hs{2}(y)}\weight{a}{b}(y)\dy\lesssim \weight{a}{0}(x)\quad \text{if } b<1\quad \text{and}\quad \int_{0}^{x}\abs*{\hs{2}(y)}\weight{a}{b}(y)\dy\lesssim \weight{a}{b-1}(x)\quad \text{if } b>1.
\end{equation}
The proofs of \cref{eq:prim:m1:est:1,eq:prim:m1:est:2,eq:prim:m2:est:1,eq:prim:m2:est:2} are elementary and follow essentially directly from the definition of $\weight{a}{b}$ and \cref{eq:est:m1,eq:est:m2} which is why they are omitted here.

To conclude this section, let us finally compute several integrals of $\hs{1}$ and $\hs{2}$ explicitly. In fact, noting that $\hs{1}(\eta)=(1-\eta)\ee^{-\eta}=(\eta\ee^{-\eta})'$ we get
\begin{equation}\label{eq:prim:m1}
 \int_{0}^{x}(1-\eta)\hs{1}(\eta)\deta=\int_{0}^{x}(\eta\ee^{-\eta})'-(1-\eta)\deta=x\ee^{-x}-x+\frac{x^2}{2}=x\Bigl(\ee^{-x}-1+\frac{x}{2}\Bigr).
\end{equation}
With $\hs{2}(\eta)=1+(1-\eta)\ee^{-\eta}\int_{1}^{\eta}\frac{\ee^{z}}{z}\dz=1+(\eta\ee^{-\eta})'\int_{1}^{\eta}\frac{\ee^{z}}{z}\dz$ integration by parts shows
\begin{equation}\label{eq:prim:m2:1}
 \int_{0}^{x}\hs{2}(\eta)\deta=\int_{0}^{x}1+(\eta\ee^{-\eta})'\int_{1}^{\eta}\frac{\ee^{z}}{z}\dz\deta=x+x\ee^{-x}\int_{1}^{x}\frac{\ee^{z}}{z}\dz-\int_{0}^{x}\deta=x\ee^{-x}\int_{1}^{x}\frac{\ee^{z}}{z}\dz.
\end{equation}
Finally, we obtain in a similar way
\begin{multline}\label{eq:prim:m2:2}
 \int_{0}^{x}\ee^{\eta}\hs{2}(\eta)\deta=\int_{0}^{x}\ee^{\eta}+(1-\eta)\int_{1}^{\eta}\frac{\ee^{z}}{z}\dz\deta=\ee^{x}-1+\int_{0}^{x}\Bigl(\frac{\eta}{2}(2-\eta)\Bigr)'\int_{1}^{\eta}\frac{\ee^{z}}{z}\dz\deta\\*
 =\ee^{x}-1+\frac{x(2-x)}{2}\int_{1}^{x}\frac{\ee^{z}}{z}\dz-\frac{1}{2}\int_{0}^{x}(2-\eta)\ee^{\eta}\deta=\frac{x(2-x)}{2}\int_{1}^{x}\frac{\ee^{z}}{z}\dz+\frac{(x-1)\ee^{x}+1}{2}.
\end{multline}

\section{Derivation of $\LL^{-1}$}\label{Sec:inverse:derivation}

 In this section we will illustrate how the formulas \cref{eq:def:pre:inverse,eq:def:inverse} for the inverse $\LL^{-1}$ can be obtained. We particularly emphasise that the approach for this derivation is completely formal and the entire proof of Proposition~\ref{Prop:continuity:L:inverse} is contained in Section~\ref{Sec:proof:inversion}. In fact, this section only serves for illustrative purposes.
 
 To determine the inverse $\LL^{-1}$ we have to solve $\LL[\ode]=\rhs$ for given $\rhs$. However, classical methods from integral equations (e.g.\@ \cite{Waz15}) allow to convert this problem into a linear ODE of second order for $\ode$. For the latter, well-known methods then provide a solution formula which yields the desired expression in~\eqref{eq:def:inverse} for $\LL^{-1}$. The same idea has been pursued in~\cite{Sch16a} while a similar approach to transform coagulation equations into (systems of) ODEs can for example also be found in~\cite{MNV11}.
  
 To derive the ODE, let us consider the equation
 \begin{equation}\label{eq:linearised:1}
  \rhs=\LL[\ode]=\ode-\frac{2}{x^2}\int_{0}^{x}\int_{x-y}^{\infty}y\ee^{-y}\ode(z)+y\ode(y)\ee^{-z}\dz\dy.
 \end{equation}
 Multiplying by $x^2$ and differentiating on both sides, we find
 \begin{multline*}
  2x\rhs(x)+x^2\rhs'(x)=2x\ode(x)+x^2\ode'(x)-2x\ee^{-x}\int_{0}^{\infty}\ode(z)\dz-2x\ode(x)\int_{0}^{\infty}\ee^{-z}\dz\\*
  +2\int_{0}^{x}y\ee^{-y}\ode(x-y)\dy+2\int_{0}^{x}y\ode(y)\ee^{-(x-y)}\dy.
 \end{multline*}
 Changing variables $y\mapsto (x-y)$ in the third integral on the right-hand side and using that $\int_{0}^{\infty}\ee^{-z}\dz=1$ this equation simplifies as
 \begin{equation*}
  2x\rhs(x)+x^2\rhs'(x)=x^2\ode'(x)-2x\ee^{-x}\int_{0}^{\infty}\ode(z)\dz+2x\ee^{-x}\int_{0}^{x}\ee^{y}\ode(y)\dy.
 \end{equation*}
 Now, multiplying by $\ee^{x}/x$ and differentiating again yields
 \begin{equation*}
  2\ee^{x}\bigl(\rhs(x)+\rhs'(x)\bigr)+\ee^{x}\bigl((1+x)\rhs'(x)+x\rhs''(x)\bigr)=\ee^{x}\bigl((1+x)\ode'(x)+x\ode''(x)\bigr)+2\ee^{x}\ode(x).
 \end{equation*}
 Thus, dividing by $x\ee^{x}$ we obtain the ODE
 \begin{equation}\label{eq:ODE:1}
  \ode''(x)+\frac{1+x}{x}\ode'(x)+\frac{2}{x}\ode(x)=\rhs''(x)+\frac{3+x}{x}\rhs'(x)+\frac{2}{x}\rhs(x).
 \end{equation}
 To derive a solution to this equation we follow the standard method of variation of parameters (see e.g.\@ \cite{BeO99}). To this end, we first have to obtain the general solution to
 \begin{equation}\label{eq:ODE:hom}
  \ode''(x)+\frac{1+x}{x}\ode'(x)+\frac{2}{x}\ode(x)=0.
 \end{equation}
 One immediately checks, that one solution is given by $\hs{1}(x)=(1-x)\ee^{-x}$, while a second linearly independent solution can be derived by \emph{reduction of order} (e.g.\@ \cite{BeO99}). In fact, after some elementary computation one finds that a linearly independent solution is given by $\hs{2}(x)=1+(1-x)\ee^{-x}\int_{1}^{x}\frac{\ee^{z}}{z}\dz$. Note that the lower integral bound can be chosen arbitrarily while we take $1$ here for convenience. 
 
 \begin{remark}\label{Rem:m2:not:sol}
 For completeness, let us mention that even though $\hs{2}$ solves~\eqref{eq:ODE:hom} we have $\LL[\hs{2}]\not\equiv 0$, i.e.\@ $\hs{2}\not\in \ker \LL$ independently of the choice of the space. The reason for this is simply that the differentiation used to transform the integral equation into an ODE creates this additional homogeneous solution.
\end{remark}
 
 To derive now a formula for the solution of~\eqref{eq:ODE:1} it turns out to be convenient to define $\rode\vcc=\ode-\rhs$ such that~\eqref{eq:ODE:1} can be rewritten in terms of $\rode$ as
 \begin{equation}\label{eq:ODE:2}
  \rode''(x)+\frac{1+x}{x}\rode'(x)+\frac{2}{x}\rode(x)=\frac{2}{x}\rhs'(x).
 \end{equation}
 It is well-known that the set of all solutions of~\eqref{eq:ODE:2} is given by $\{\rode^{p}+C_{1}\hs{1}+C_{2}\hs{2}\;|\; C_1, C_2\in\R\}$, where $\rode^{p}$ can be chosen to be any \emph{particular solution} to~\eqref{eq:ODE:2}. Following~\cite{BeO99} such a $\rode^{p}$ is given by the formula
 \begin{equation}\label{eq:part:sol:gen}
  \rode^{p}(x)=-\hs{1}(x)\int^{x}\frac{\frac{2\rhs'(y)}{y}\hs{2}(y)}{W(y)}\dy+\hs{2}(x)\int^{x}\frac{\frac{2\rhs'(y)}{y}\hs{1}(y)}{W(y)}\dy
 \end{equation}
 where the notation $\int^{x}f(y)\dy$ indicates a primitive of $f$ and $W$ is the Wronskian corresponding to $\hs{1}$ and $\hs{2}$, i.e.\@ $W(x)=\hs{1}(x)\hs{2}'(x)-\hs{1}'(x)\hs{2}$. This quantity can be easily computed. In fact, we have
 \begin{multline*}
  W(x)=(1-x)\ee^{-x}\biggl(-\ee^{-x}\int_{1}^{x}\frac{\ee^{z}}{z}\dz-(1-x)\ee^{-x}\int_{1}^{x}\frac{\ee^{z}}{z}\dz+\frac{1-x}{x}\biggr)\\*
  -\bigl(-\ee^{-x}-(1-x)\ee^{-x}\bigr)\biggl(1+(1-x)\ee^{-x}\int_{1}^{x}\frac{\ee^{z}}{z}\dz\biggr).
 \end{multline*}
Summarising the expressions on the right-hand side we get
\begin{equation*}
 W(x)=\frac{(1-x)^{2}\ee^{-x}}{x}+\ee^{-x}+(1-x)\ee^{-x}=\frac{\ee^{-x}}{x}.
\end{equation*}
Thus, from~\eqref{eq:part:sol:gen} we obtain the following expression for a solution $\rode^{p}$ of~\eqref{eq:ODE:2}
\begin{equation*}
 \rode^{p}(x)=-2\hs{1}(x)\int_{a}^{x}\ee^{y}\hs{2}(y)\rhs'(y)\dy+2\hs{2}\int_{b}^{x}\ee^{y}\hs{1}(y)\rhs'(y)\dy.
\end{equation*}
We are still free to choose appropriate constants $a$ and $b$. Moreover, in order to get a formula which only depends on $g$ rather than on $g'$ we have to integrate by parts which leads to
\begin{multline*}
 \rode^{p}(x)=-2\hs{1}(x)\hs{2}(x)\ee^{x}\rhs(x)+2\hs{1}(x)\hs{2}(a)\ee^{a}\rhs(a)+2\hs{1}(x)\int_{a}^{x}\ee^{y}\bigl(\hs{2}(y)+\hs{2}'(y)\bigr)\rhs(y)\dy\\*
 +2\hs{1}(x)\hs{2}(x)\ee^{x}\rhs(x)-2\hs{2}(x)\hs{1}(b)\ee^{b}\rhs(b)-2\hs{2}(x)\int_{b}^{x}\ee^{y}\bigl(\hs{1}(y)+\hs{1}'(y)\bigr)\rhs(y)\dy.
\end{multline*}
Since we want to invert $\LL$ for $L^1$ functions, the final formula should not contain expressions such as $\rhs(a)$ or $\rhs(b)$. However, it is shown in Section~\ref{Sec:lin:inj} that $\hs{1}\in\ker \LL$. Thus, we can subtract the term $2\hs{1}(x)\hs{2}(a)\ee^{a}\rhs(a)$ and still get a particular solution. On the other hand, we choose $b$ such that the boundary term vanishes (at least formally). In fact, we take $b=\infty$ which is motivated by the assumption that $\rhs$ is in $L^1$ such that one expects that $\rhs(\infty)=0$ in a suitable sense. We note again that this derivation is completely formal and the correctness of the result has to be verified a posteriori (see Section~\ref{Sec:proof:inversion}). Thus, computing also
\begin{equation*}
 \hs{1}(x)+\hs{1}'(x)=-\ee^{-x}\qquad \text{and}\qquad \hs{2}(x)+\hs{2}'(x)=\frac{1}{x}-\ee^{-x}\int_{1}^{x}\frac{\ee^{z}}{z}\dz
\end{equation*}
we obtain the final formula for $\rode^{p}$ as
\begin{equation*}
 \rode^{p}(x)=2\hs{1}(x)\int_{1}^{x}\biggl(\frac{\ee^{y}}{y}-\int_{1}^{y}\frac{\ee^{z}}{z}\dz\biggr)\rhs(y)\dy-2\hs{2}(x)\int_{x}^{\infty}\rhs(y)\dy.
\end{equation*}
Thus, a solution $\ode^{p}$ to our original equation~\eqref{eq:ODE:1} is given by $\ode^{p}=\rode^{p}+\rhs$, i.e.\@
\begin{equation}\label{eq:particular:ODE:1}
 \ode^{p}(x)=\rhs(x)+2\hs{1}(x)\int_{1}^{x}\biggl(\frac{\ee^{y}}{y}-\int_{1}^{y}\frac{\ee^{z}}{z}\dz\biggr)\rhs(y)\dy-2\hs{2}(x)\int_{x}^{\infty}\rhs(y)\dy.
\end{equation}

However, this is not yet the full expression for $\LL^{-1}$ since we have in principle the freedom to add any multiple of $\hs{1}$ and $\hs{2}$. In fact, since~\eqref{eq:Smol} is mass conserving we cannot even expect to be able to invert $\LL$ on $\X{a}{b}$ for appropriate $a$ and $b$. Instead, it appears natural to invert $\LL$ on the subset $\X[0]{a}{b}$ with zero total mass. Precisely, as shown in Section~\ref{Sec:lin:inj} the latter space satisfies $\X[0]{a}{b}\cap \ker \LL=\{0\}$.

The idea thus is to add suitable multiples of $\hs{1}$ and $\hs{2}$ to $\ode^{p}$ to obtain a solution to~\eqref{eq:ODE:1} whose first moment is zero. However, \eqref{eq:est:m2} shows that $\int_{0}^{\infty}x\hs{2}(x)\dx$ does not exist. Therefore, since we already implicitly constructed $\rode^{p}$ and $\ode^{p}$ to have a finite first moment (by the choice of $b=\infty$), it only remains to use $\hs{1}$ to adjust the total mass of $\ode^{p}$. In fact, recalling~\eqref{eq:mass:m1} the formula 
\begin{multline*}
  \ode^{p}(x)+\int_{0}^{\infty}y\ode^{p}(y)\dy \hs{1}(x)\\*
  =\rhs(x)+2\hs{1}(x)\int_{1}^{x}\biggl(\frac{\ee^{y}}{y}-\int_{1}^{y}\frac{\ee^{z}}{z}\dz\biggr)\rhs(y)\dy-2\hs{2}(x)\int_{x}^{\infty}\rhs(y)\dy+\int_{0}^{\infty}y\ode^{p}(y)\dy \hs{1}(x)
\end{multline*}
provides a solution to~\eqref{eq:ODE:1} whose first moment equals zero. For given $\rhs$, the right-hand side is exactly the expression $\A_{0}[\rhs]$ as defined in~\eqref{eq:def:inverse} while $\A[\rhs]$ corresponds to $\ode^{p}$.

\section{Regularity of self-similar profiles}\label{Sec:regularity:profiles}
 
 We provide here continuity and differentiability of self-similar profiles. The corresponding proofs follow standard methods which have been applied in similar form already before (e.g.\@ in~\cite{EsM06,FoL06}). However, since those previous results do not exactly cover the profiles under the assumptions considered in this work, we give the proofs for completeness, since we rely on differentiability especially for the proof of Proposition~\ref{Prop:bound:layer:est}.
 
 \begin{lemma}\label{Lem:continuity}
  Let $K_{\eps}$ satisfy \cref{eq:Ass:K1,eq:Ass:K2}. Then, for sufficiently small $\eps>0$, each self-similar profile $\pr$ is continuous on $(0,\infty)$.
 \end{lemma}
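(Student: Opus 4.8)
The plan is to read off $\pr$ from \eqref{eq:selfsim} as an explicit expression in $x$ and then establish continuity of that expression by dominated convergence. Concretely, I would set
\begin{equation*}
 \tilde{\prof}(x)\vcc=\frac{1}{x^{2}}\int_{0}^{x}\int_{x-y}^{\infty}yK_{\eps}(y,z)\pr(y)\pr(z)\dz\dy,
\end{equation*}
which by Definition~\ref{Def:profile} agrees with $\pr(x)$ for almost every $x\in(0,\infty)$. It therefore suffices to check that the (triple) integral is finite for every $x>0$ and that $x\mapsto\tilde{\prof}(x)$ is continuous on $(0,\infty)$; then $\pr$ admits $\tilde{\prof}$ as a continuous representative. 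Finiteness for fixed $x$ follows immediately from the bound $K_{\eps}(y,z)\lesssim\weight{-\alpha}{\alpha}(y)\weight{-\alpha}{\alpha}(z)$ in~\eqref{eq:pert:est:weight} together with~\eqref{eq:weight:shift}, since the integral is then dominated by $\bigl(\int_{0}^{x}\weight{1-\alpha}{1+\alpha}(y)\pr(y)\dy\bigr)\bigl(\int_{0}^{\infty}\weight{-\alpha}{\alpha}(z)\pr(z)\dz\bigr)$, and both factors are finite (uniformly for small $\eps$) by Lemma~\ref{Lem:moments} because $1-\alpha>-1$ and $-\alpha>-1$.

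For the continuity at a fixed $x_{0}>0$, I would restrict to $x\in[x_{0}/2,2x_{0}]$ and rewrite the double integral as an integral over $(0,\infty)^{2}$ of $\mathbf{1}_{\{y<x\}}\mathbf{1}_{\{z>x-y\}}\,yK_{\eps}(y,z)\pr(y)\pr(z)$. As $x\to x_{0}$ the integrand converges pointwise for almost every $(y,z)$, the only exceptional sets being the lines $\{y=x_{0}\}$ and $\{z=x_{0}-y\}$, which are Lebesgue-null in the plane. A single $x$-independent dominating function on the chosen interval is $\mathbf{1}_{\{y<2x_{0}\}}\,yK_{\eps}(y,z)\pr(y)\pr(z)$, whose integrability over $(0,\infty)^{2}$ is exactly the computation in the previous paragraph with $x$ replaced by $2x_{0}$. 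Dominated convergence then shows that $x\mapsto\int_{0}^{x}\int_{x-y}^{\infty}yK_{\eps}(y,z)\pr(y)\pr(z)\dz\dy$ is continuous at $x_{0}$, and dividing by the continuous, non-vanishing factor $x^{2}$ gives continuity of $\tilde{\prof}$ at $x_{0}$.

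The argument is essentially routine; the only delicate point is the integrability of the dominating function near $y=0$ and $z=0$, where $K_{\eps}$ is singular. This is precisely where the weighted moment bounds of Lemma~\ref{Lem:moments} (which in turn rest on \cref{Lem:exp:decay,Lem:regularity:zero}) enter, and it is the reason the statement is only asserted for sufficiently small $\eps$. If one prefers to avoid the pointwise convergence of indicators, an alternative is to estimate $\abs{\tilde{\prof}(x_{1})-\tilde{\prof}(x_{2})}$ directly by isolating the thin strips $\{x_{2}<y<x_{1}\}$ and $\{x_{1}-y<z<x_{2}-y\}$ and controlling them with the same weighted moments; this even produces an explicit modulus of continuity, but it is not needed for the statement as formulated.
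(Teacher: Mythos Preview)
Your argument is correct and rests on the same two ingredients as the paper's proof: the kernel bound~\eqref{eq:pert:est:weight} and the uniform moment control of Lemma~\ref{Lem:moments}. The only difference is in the packaging of the continuity step: the paper takes the alternative you mention in your last paragraph, i.e.\ it estimates directly
\[
\Bigl|\int_{0}^{x_2}\!\int_{x_2-y}^{\infty}-\int_{0}^{x_1}\!\int_{x_1-y}^{\infty}\Bigr|\leq \int_{x_1}^{x_2}\!\int_{x_2-y}^{\infty}(\cdots)+\int_{0}^{x_1}\!\int_{x_1-y}^{x_2-y}(\cdots)
\]
and then uses absolute continuity of the integral of $\weight{1-\alpha}{1+\alpha}\pr$ and $\weight{-\alpha}{\alpha}\pr$ on shrinking strips, rather than dominated convergence on indicator functions. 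Both routes are standard and equivalent here; your dominated-convergence version is slightly slicker, while the paper's direct estimate makes the modulus of continuity more explicit (and feeds more naturally into the differentiability proof that follows in Proposition~\ref{Prop:differentiability}).
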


 \begin{proof}
  The profile $\pr$ satisfies $x^{2}\pr(x)=\int_{0}^{x}\int_{x-y}^{\infty}yK_{\eps}(y,z)\pr(y)\pr(z)\dz\dy$. Thus, it suffices to prove continuity of the right-hand side. To see this, let $0<x_1<x_2$ be give. Then, by splitting the integral, and exploiting the non-negativity of the integrand, we get
  \begin{multline*}
   \abs*{\int_{0}^{x_2}\int_{x_2-y}^{\infty}yK_{\eps}(y,z)\pr(y)\pr(z)\dz\dy-\int_{0}^{x_1}\int_{x_1-y}^{\infty}yK_{\eps}(y,z)\pr(y)\pr(z)\dz\dy}\\*
   \leq \int_{x_1}^{x_2}\int_{x_2-y}^{\infty}yK_{\eps}(y,z)\pr(y)\pr(z)\dz\dy+\int_{0}^{x_1}\int_{x_1-y}^{x_{2}-y}yK_{\eps}(y,z)\pr(y)\pr(z)\dz\dy.
  \end{multline*}
 To estimate the right-hand side further, we recall from~\eqref{eq:pert:est:weight} that $K_{\eps}(y,z)\lesssim \weight{-\alpha}{\alpha}(y)\weight{-\alpha}{\alpha}(z)$ and extend the first integral in $z$ which yields together with \cref{eq:weight:shift,Lem:moments} that
   \begin{multline}\label{eq:proof:cont}
   \abs*{\int_{0}^{x_2}\int_{x_2-y}^{\infty}yK_{\eps}(y,z)\pr(y)\pr(z)\dz\dy-\int_{0}^{x_1}\int_{x_1-y}^{\infty}yK_{\eps}(y,z)\pr(y)\pr(z)\dz\dy}\\*
   \shoveleft{\leq \int_{x_1}^{x_2}\weight{1-\alpha}{1+\alpha}(y)\pr(y)\dy\int_{0}^{\infty}\weight{-\alpha}{\alpha}(z)\pr(z)\dz}\\*
   +\int_{0}^{x_1}\int_{x_1-y}^{x_{2}-y}\weight{1-\alpha}{1+\alpha}(y)\weight{-\alpha}{\alpha}(z)\pr(y)\pr(z)\dz\dy\\*
   \lesssim \int_{x_1}^{x_2}\weight{1-\alpha}{1+\alpha}(y)\pr(y)\dy+\int_{0}^{x_1}\int_{x_1-y}^{x_{2}-y}\weight{1-\alpha}{1+\alpha}(y)\weight{-\alpha}{\alpha}(z)\pr(y)\pr(z)\dz\dy.
  \end{multline}
  Lemma~\ref{Lem:moments} ensures that $y\mapsto \weight{1-\alpha}{1+\alpha}(y)\pr(y)$ and $(y,z)\mapsto \weight{1-\alpha}{1+\alpha}(y)\weight{-\alpha}{\alpha}(z)\pr(y)\pr(z)$ are integrable functions on $(0,\infty)$ and $(0,\infty)^2$ respectively. As a consequence, the right-hand side of~\eqref{eq:proof:cont} converges to zero if $\abs{x_2-x_1}\to 0$ (e.g.\@ \cite[eq.\@ (A3-10)]{Alt16}). This then finishes the proof.
 \end{proof}

 \begin{proposition}\label{Prop:differentiability}
  Let $K_{\eps}$ satisfy \cref{eq:Ass:K1,eq:Ass:K2}. Then, for sufficiently small $\eps>0$, each self-similar profile $\pr$ is differentiable on $(0,\infty)$.
 \end{proposition}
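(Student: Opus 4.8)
The goal is to upgrade the continuity of $\pr$ furnished by \cref{Lem:continuity} to $\pr\in C^{1}\bigl((0,\infty)\bigr)$; since $x\mapsto x^{2}$ is smooth and nonvanishing on $(0,\infty)$, it suffices to show that $J(x):=x^{2}\pr(x)$ is continuously differentiable (and this in particular gives the differentiability claimed). Throughout, $\eps>0$ is small enough for \cref{Lem:continuity,Lem:moments} to apply. Starting from \eqref{eq:selfsim}, for $y\in(0,x)$ one has $x-y\in(0,\infty)$, and since $A(y):=\int_{0}^{\infty}K_{\eps}(y,z)\pr(z)\dz$ will be seen to be finite, we may split $\int_{x-y}^{\infty}=\int_{0}^{\infty}-\int_{0}^{x-y}$; interchanging the order of integration in the second term yields
\begin{equation*}
 J(x)=F_{1}(x)-G(x),\qquad F_{1}(x):=\int_{0}^{x}y\,A(y)\,\pr(y)\dy,\qquad G(x):=\iint_{\{y,z>0,\;y+z<x\}}yK_{\eps}(y,z)\pr(y)\pr(z)\dz\dy,
\end{equation*}
and I treat the two terms separately.

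For $F_{1}$: by \eqref{eq:pert:est:weight} we have $K_{\eps}(y,z)\pr(z)\lesssim\weight{-\alpha}{\alpha}(y)\weight{-\alpha}{\alpha}(z)\pr(z)$, so for $y$ in a fixed compact subset of $(0,\infty)$ the integrand defining $A(y)$ is bounded by a constant times $\weight{-\alpha}{\alpha}(z)\pr(z)$, which is integrable by \cref{Lem:moments} (as $-\alpha>-1$). Together with the continuity of $K_{\eps}$ this shows, by dominated convergence, that $A$ is finite and continuous on $(0,\infty)$. Hence $y\mapsto y\,A(y)\,\pr(y)$ is continuous on $(0,\infty)$ and, by \eqref{eq:weight:shift} and \cref{Lem:moments}, integrable near $0$ (it is $\lesssim\weight{1-\alpha}{1+\alpha}(y)\pr(y)$ with $1-\alpha>-1$); therefore $F_{1}\in C^{1}\bigl((0,\infty)\bigr)$ with $F_{1}'(x)=x\,A(x)\,\pr(x)$.

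For $G$: first, $G(x)=F_{1}(x)-x^{2}\pr(x)<\infty$ and $G\ge 0$. For $0<x_{1}<x_{2}$ the change of variables $(y,z)\mapsto(y,w)$ with $w=y+z$ carries $\{y,z>0,\;x_{1}\le y+z<x_{2}\}$ onto $\{x_{1}\le w<x_{2},\;0<y<w\}$ with unit Jacobian, so by Tonelli's theorem
\begin{equation*}
 G(x_{2})-G(x_{1})=\int_{x_{1}}^{x_{2}}\Lambda(w)\dd{w},\qquad \Lambda(w):=\int_{0}^{w}yK_{\eps}(y,w-y)\pr(y)\pr(w-y)\dy,
\end{equation*}
so it remains to prove that $\Lambda$ is finite and continuous on $(0,\infty)$, for then $G\in C^{1}\bigl((0,\infty)\bigr)$ with $G'=\Lambda$. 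Fix $w_{0}>0$, put $[a,b]:=[w_{0}/2,2w_{0}]$ and, for $w$ near $w_{0}$, split $\int_{0}^{w}=\int_{0}^{a/2}+\int_{a/2}^{w-a/2}+\int_{w-a/2}^{w}$. On the first piece $w-y\in(a/2,b)$, so $\pr(w-y)$ and $\weight{-\alpha}{\alpha}(w-y)$ are bounded uniformly in $w\in[a,b]$, and by \eqref{eq:pert:est:weight} and \eqref{eq:weight:shift} the integrand is dominated by a constant multiple of $\weight{1-\alpha}{1+\alpha}(y)\pr(y)$, integrable by \cref{Lem:moments}. The third piece, after the substitution $s=w-y$, becomes $\int_{0}^{a/2}(w-s)K_{\eps}(w-s,s)\pr(w-s)\pr(s)\dd{s}$ and is dominated analogously by a constant multiple of $\weight{-\alpha}{\alpha}(s)\pr(s)$. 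On the middle piece $y$ and $w-y$ both lie in $[a/2,b]$, away from the origin, so the integrand is jointly continuous and bounded there. In all three cases the finiteness and the continuity in $w$ of the corresponding partial integral follow from dominated convergence together with the continuity of $K_{\eps}$ and of $\pr$ (\cref{Lem:continuity}), the moving integration limits being handled by the standard estimate \cite[eq.\@ (A3-10)]{Alt16}; hence $\Lambda\in C\bigl((0,\infty)\bigr)$.

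Combining, $J=F_{1}-G\in C^{1}\bigl((0,\infty)\bigr)$, whence $\pr(x)=J(x)/x^{2}\in C^{1}\bigl((0,\infty)\bigr)$ and in particular $\pr$ is differentiable on $(0,\infty)$ (with $\pr'(x)=\bigl(A(x)-2\bigr)\pr(x)/x-\Lambda(x)/x^{2}$). The only genuinely delicate point is the finiteness and continuity of $\Lambda$ near the endpoints $y=0$ and $y=w$, where the singularity of $K_{\eps}$ along the coordinate axes compounds with the possible blow-up of $\pr$ at the origin; a naive pointwise bound on $\pr$ there would only suffice for $\alpha<1/2$. The key is instead to isolate by the above splitting the factor $\pr(w-y)$ (respectively $\pr(y)$), which is bounded away from the origin, and to control the remaining factor using the integrability of $\weight{1-\alpha}{1+\alpha}\pr$ and $\weight{-\alpha}{\alpha}\pr$ guaranteed by \cref{Lem:moments} for every $\alpha\in(0,1)$.
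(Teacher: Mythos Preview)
Your proof is correct and follows essentially the same approach as the paper: both identify the derivative of $x^{2}\pr(x)$ as $x\,A(x)\pr(x)-\Lambda(x)$ and reduce the problem to showing that $\Lambda(w)=\int_{0}^{w}yK_{\eps}(y,w-y)\pr(y)\pr(w-y)\dy$ is continuous on $(0,\infty)$, which is done via the same three-way splitting near the endpoints $y=0$, $y=w$, and on the interior. The only cosmetic difference is that the paper computes the distributional derivative by testing against $\varphi'$ and applying Fubini, whereas you write $J=F_{1}-G$ with $G(x)=\int_{0}^{x}\Lambda(w)\dd{w}$ directly via a change of variables; the analytical content is the same.
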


 \begin{proof}
  We claim that the distributional derivative of $x\mapsto x^2\pr(x)$ is given by
  \begin{equation}\label{eq:proof:dif:1}
   x\mapsto x\pr(x)\int_{0}^{\infty}K_{\eps}(x,z)\pr(z)\dy-\int_{0}^{x}yK_{\eps}(y,x-y)\pr(y)\pr(x-y)\dy.
  \end{equation}
 In fact, testing~\eqref{eq:selfsim} by $-\varphi'(x)$ we get by means of Fubini's theorem and the change of variables $z\mapsto z-y$ that
 \begin{multline*}
  -\int_{0}^{\infty} x^2\pr(x)\varphi'(x)\dx=\int_{0}^{\infty}\int_{0}^{\infty}yK_{\eps}(y,z)\pr(y)\pr(z)\bigl(\varphi(y)-\varphi(y+z)\bigr)\dy\dz\\*
  \shoveleft{=\int_{0}^{\infty}\varphi(y)\biggl(y\pr(y)\int_{0}^{\infty}K_{\eps}(y,z)\pr(z)\dz\biggr)\dy}\\*
  \shoveright{-\int_{0}^{\infty}\int_{y}^{\infty}yK_{\eps}(y,z-y)\pr(y)\pr(z-y)\varphi(z)\dz\dy}\\*
  \shoveleft{=\int_{0}^{\infty}\varphi(y)\biggl(y\pr(y)\int_{0}^{\infty}K_{\eps}(y,z)\pr(z)\dz\biggr)\dy}\\*
  -\int_{0}^{\infty}\varphi(z)\int_{0}^{z}yK_{\eps}(y,z-y)\pr(y)\pr(z-y)\dy\dz.
 \end{multline*}
 Relabelling the integration variables, we thus obtain
 \begin{multline*}
  -\int_{0}^{\infty} x^2\pr(x)\varphi'(x)\dx\\*
  =\int_{0}^{\infty}\varphi(x)\biggl(x\pr(x)\int_{0}^{\infty}K_{\eps}(y,z)\pr(z)\dz-\int_{0}^{x}yK_{\eps}(y,x-y)\pr(y)\pr(x-y)\dy\biggr)\dx.
 \end{multline*}
 To see that $\pr$ is differentiable on $(0,\infty)$ it is sufficient to prove that the map~\eqref{eq:proof:dif:1} is continuous on $(0,\infty)$. For the first integral this immediately follows since~\eqref{eq:pert:est:weight} yields
 \begin{equation*}
  K_{\eps}(x,z)\pr(z)\lesssim \weight{-\alpha}{\alpha}(x)\weight{-\alpha}{\alpha}(z)\pr(z)
 \end{equation*}
 and $z\mapsto \weight{-\alpha}{\alpha}(z)\pr(z)$ is integrable due to Lemma~\ref{Lem:moments}. Thus, Lebesgue's theorem (on parameter dependent integrals) together with the assumed continuity of $K_{\eps}$ provides that $x\mapsto \int_{0}^{\infty}K_{\eps}(x,z)\pr(z)\dy$ is continuous. By means of Lemma~\ref{Lem:continuity} also the map formed by the product $x\mapsto x\pr(x)\int_{0}^{\infty}K_{\eps}(x,z)\pr(z)\dy$ is continuous on $(0,\infty)$.
 
 Therefore, it only remains to show the continuity of the second term in~\eqref{eq:proof:dif:1}. For this, let $r,R>0$ with $r<R$ but otherwise arbitrary. Moreover, we fix $\delta\in(0,r/2)$. Then, for $x_1,x_2\in[r,R]$ with $x_1<x_2$ we have
 \begin{multline}\label{eq:proof:dif:2}
  \abs*{\int_{0}^{x_2}yK_{\eps}(y,x_2-y)\pr(y)\pr(x_2-y)\dy-\int_{0}^{x_1}yK_{\eps}(y,x_1-y)\pr(y)\pr(x_1-y)\dy}\\*
  \shoveleft{\leq \int_{x_1}^{x_2}yK_{\eps}(y,x_2-y)\pr(y)\pr(x_2-y)\dy}\\*
  +\int_{0}^{x_1}y\pr(y)\abs*{K_{\eps}(y,x_2-y)\pr(x_2-y)-K_{\eps}(y,x_1-y)\pr(x_1-y)}\dy.
 \end{multline}
We now estimate the two terms separately. For the first one, we change variables $y\mapsto x_{2}-y$ and exploit $K_{\eps}(x,y)\lesssim \weight{-\alpha}{\alpha}(x)\weight{-\alpha}{\alpha}(y)$ from~\eqref{eq:pert:est:weight} together with~\eqref{eq:weight:shift} to get
\begin{multline*}
 \int_{x_1}^{x_2}yK_{\eps}(y,x_2-y)\pr(y)\pr(x_2-y)\dy=\int_{0}^{x_2-x_1}(x_2-y)K_{\eps}(x_2-y,y)\pr(x_2-y)\pr(y)\dy\\*
 \lesssim \int_{0}^{x_2-x_1}\weight{1-\alpha}{1+\alpha}(x_2-y)\pr(x_2-y)\weight{-\alpha}{\alpha}(y)\pr(y)\dy\\*
 \lesssim \sup_{x\in[r,R]}\Bigl(\weight{1-\alpha}{1+\alpha}(x)\pr(x)\Bigr)\int_{0}^{x_2-x_1}\weight{-\alpha}{\alpha}(y)\pr(y)\dy\longrightarrow 0\qquad \text{as }\abs{x_2-x_1}\to 0
\end{multline*}
 due to \cite[eq.\@ (A3-10)]{Alt16} and the integrability of $y\mapsto \weight{-\alpha}{\alpha}(y)\pr(y)$ which is guaranteed by Lemma~\ref{Lem:moments}. To estimate the second term on the right-hand side of~\eqref{eq:proof:dif:2} we split the integral further to find
 \begin{multline}\label{eq:proof:dif:3}
  \int_{0}^{x_1}y\pr(y)\bigl(K_{\eps}(y,x_2-y)\pr(x_2-y)-K_{\eps}(y,x_1-y)\pr(x_1-y)\bigr)\dy\\*
  =\int_{0}^{\delta}(\cdots)\dy+\int_{x_{1}-\delta}^{x_{1}}(\cdots)\dy+\int_{\delta}^{x_{1}-\delta}(\cdots)\dy.
 \end{multline}
 For the first integral we obtain by means of $K_{\eps}(x,y)\lesssim \weight{-\alpha}{\alpha}(x)\weight{-\alpha}{\alpha}(y)$ and~\eqref{eq:weight:shift}
 \begin{multline*}
  \int_{0}^{\delta}y\pr(y)\abs*{K_{\eps}(y,x_2-y)\pr(x_2-y)-K_{\eps}(y,x_1-y)\pr(x_1-y)}\dy\\*
  \lesssim \int_{0}^{\delta}\weight{1-\alpha}{1+\alpha}(y)\pr(y)\bigl(\weight{-\alpha}{\alpha}(x_2-y)\pr(x_2-y)+\weight{-\alpha}{\alpha}(x_1-y)\pr(x_1-y)\bigr)\dy\\*
  \lesssim \sup_{x\in[r/2,R]}\Bigl(\weight{-\alpha}{\alpha}(x)\pr(x)\Bigr)\int_{0}^{\delta}\weight{1-\alpha}{1+\alpha}(y)\pr(y)\dy\longrightarrow 0\qquad \text{as }\delta\to 0
 \end{multline*}
 due to \cite[eq.\@ (A3-10)]{Alt16} and Lemma~\ref{Lem:moments}. The second term on the right-hand side of~\eqref{eq:proof:dif:3} can be treated similarly, while we also have to change variables $y\mapsto x_2-y$ and $y\mapsto x_1-y$ respectively, to get
 \begin{multline*}
  \int_{x_1-\delta}^{x_{1}}y\pr(y)\abs*{K_{\eps}(y,x_2-y)\pr(x_2-y)-K_{\eps}(y,x_1-y)\pr(x_1-y)}\dy\\*
  \leq \int_{0}^{\delta}\weight{1-\alpha}{1+\alpha}(x_1-y)\pr(x_1-y)\weight{-\alpha}{\alpha}(y)\pr(y)\dy\\*
  +\int_{x_2-x_1}^{x_2-x_1+\delta}\weight{1-\alpha}{1+\alpha}(x_2-y)\pr(x_1-y)\weight{-\alpha}{\alpha}(y)\pr(y)\dy\\*
  \lesssim \sup_{x\in[r/2,R]}\Bigl(\weight{1-\alpha}{1+\alpha}(x)\pr(x)\Bigr)\biggl(\int_{0}^{\delta}\weight{-\alpha}{\alpha}(y)\pr(y)\dy+\int_{x_2-x_1}^{x_2-x_1+\delta}\weight{-\alpha}{\alpha}(y)\pr(y)\dy\biggr).
 \end{multline*}
Again, the right-hand side tends to zero if $\delta\to 0$ thanks to \cite[eq.\@ (A3-10)]{Alt16} and Lemma~\ref{Lem:moments}. Thus, it only remains to estimate the third term on the right-hand side of~\eqref{eq:proof:dif:3}. For this, we note that due to Lemma~\ref{Lem:continuity} and the continuity of $K_{\eps}$ the map $(x,y)\mapsto K_{\eps}(y,x)$ is uniformly continuous on the compact set $[\delta,R]\times[\delta,R]$. Thus, we have for $x_1,x_2\in[r,R]$ that
\begin{equation*}
 \abs*{K_{\eps}(y,x_2-y)\pr(x_2-y)-K_{\eps}(y,x_1-y)\pr(x_1-y)}\leq \psi(\abs{x_2-x_1}) 
\end{equation*}
with $\psi(z)\to 0$ as $z\to 0$. This together with Lemma~\ref{Lem:moments} immediately yields
 \begin{multline*}
  \int_{\delta}^{x_{1}-\delta}y\pr(y)\abs*{K_{\eps}(y,x_2-y)\pr(x_2-y)-K_{\eps}(y,x_1-y)\pr(x_1-y)}\dy\\*
  \leq \int_{0}^{\infty}y\pr(y)\dy \psi(\abs{x_2-x_1}) \lesssim \psi(\abs{x_2-x_1}) \to 0
 \end{multline*}
as $\abs{x_2-x_1}\to 0$ which finishes the proof.
\end{proof}

\bibliographystyle{abbrv}

\end{document}